\documentclass[dvips,preprint]{imsart}
\RequirePackage[OT1]{fontenc} \RequirePackage{amsthm,amsmath}
\RequirePackage[numbers]{natbib}
\RequirePackage[colorlinks,citecolor=blue,urlcolor=blue]{hyperref}
\usepackage{amsmath}
\usepackage{amssymb}
\usepackage{amsmath}
\usepackage{amsfonts}
\usepackage[american]{babel}
\usepackage{graphicx}
\usepackage{flafter}
\usepackage[section]{placeins}
\usepackage{float}
\usepackage{makecell}

\startlocaldefs

\def\E{{\mathbb E }}

\def\goin{\to\infty}
\def\Bbb E{\mathbb{E}}
\def\Bbb R{\mathbb{R}}
\parskip=0.03truein
\hfuzz=0.3truein

\newtheorem{definition}{Definition}

\newtheorem{lemma}{Lemma}
\newtheorem{theorem}{Theorem}
\newtheorem{corollary}{Corollary}

\makeatletter \@addtoreset{equation}{section}

\makeatother

\font\tencmmib=cmmib10 \skewchar\tencmmib '60
\newfam\cmmibfam
\textfont\cmmibfam=\tencmmib

\font\tenmsb=msbm10 
\def\Bbb#1{\hbox{\tenmsb#1}}

\def\bbox{\quad\hbox{\vrule \vbox{\hrule \vskip2pt \hbox{\hskip2pt
\vbox{\hsize=1pt}\hskip2pt} \vskip2pt\hrule}\vrule}}
\def\lessim{\ \lower4pt\hbox{$
\buildrel{\displaystyle <}\over\sim$}\ }
\def\gessim{\ \lower4pt\hbox{$\buildrel{\displaystyle >}
\over\sim$}\ }


%

\def\goin{\to \infty}
\def\go0{\to 0}

\def\leftitem#1{\item{\hbox to\parindent{\enspace#1\hfill}}}

\def\qed{{$\hfill \bbox$}}

\def\sg{\sigma}

\def\sg2{\sigma^2}

\def\__{_{\infty}}

\numberwithin{equation}{section} \theoremstyle{plain}

\newcommand{\1}{{\rm 1}\kern-0.24em{\rm I}}
\def\E{\mathbb E}
\def\R{\mathbb R}
\newtheorem{assumption}{Assumption}

\newtheorem{rema}{Remark}

\endlocaldefs

\begin{document}

\begin{frontmatter}
\title{Normal approximation and concentration 
of spectral projectors of sample covariance}\runtitle{Asymptotics and concentration of spectral projectors}

\begin{aug}
\author{\fnms{Vladimir} \snm{Koltchinskii}\thanksref{t1}\ead[label=e1]{vlad@math.gatech.edu}} and 
\author{\fnms{Karim} \snm{Lounici}\thanksref{m1}\ead[label=e2]{klounici@math.gatech.edu}}
\thankstext{t1}{Supported in part by NSF Grants DMS-1207808, CCF-0808863 and CCF-1415498}
\thankstext{m1}{Supported in part by Simons Grant 315477 and NSF Career Grant, DMS-1454515}
\runauthor{V. Koltchinskii and K. Lounici}

\affiliation{Georgia Institute of Technology\thanksmark{m1}}

\address{School of Mathematics\\
Georgia Institute of Technology\\
Atlanta, GA 30332-0160\\
\printead{e1}\\
\printead*{e2}
}
\end{aug}

\begin{abstract}
Let $X,X_1,\dots, X_n$ be i.i.d. Gaussian random variables in a separable Hilbert space ${\mathbb H}$ with zero mean and covariance operator $\Sigma={\mathbb E}(X\otimes X),$ and let 
$\hat \Sigma:=n^{-1}\sum_{j=1}^n (X_j\otimes X_j)$
be the sample (empirical) covariance operator based on $(X_1,\dots, X_n).$  
Denote  by $P_r$ the spectral projector of $\Sigma$ corresponding to its $r$-th eigenvalue $\mu_r$ and by $\hat P_r$ the empirical counterpart 
of $P_r.$ The main goal of the paper is to obtain tight bounds on 
$$
\sup_{x\in {\mathbb R}}
\left|{\mathbb P}\left\{\frac{\|\hat P_r-P_r\|_2^2-{\mathbb E}\|\hat P_r-P_r\|_2^2}{{\rm Var}^{1/2}(\|\hat P_r-P_r\|_2^2)}\leq x\right\}-\Phi(x)\right|,
$$
where $\|\cdot\|_2$ denotes the Hilbert--Schmidt norm and $\Phi$ is the standard normal distribution function. Such accuracy of normal approximation 
of the distribution of squared Hilbert--Schmidt error is characterized in terms of so called effective rank of $\Sigma$
defined as ${\bf r}(\Sigma)=\frac{{\rm tr}(\Sigma)}{\|\Sigma\|_{\infty}},$ where ${\rm tr}(\Sigma)$ is 
the trace of $\Sigma$ and $\|\Sigma\|_{\infty}$ is its operator norm, as well as another parameter 
characterizing the size of  ${\rm Var}(\|\hat P_r-P_r\|_2^2).$ Other results include non-asymptotic bounds and asymptotic 
representations for the mean squared Hilbert--Schmidt norm error ${\mathbb E}\|\hat P_r-P_r\|_2^2$ and the variance 
${\rm Var}(\|\hat P_r-P_r\|_2^2),$ and concentration inequalities for $\|\hat P_r-P_r\|_2^2$ around its expectation. 
\end{abstract}

\begin{keyword}[class=AMS]
\kwd[Primary ]{62H12} 
\end{keyword}

\begin{keyword}
\kwd{Sample covariance} \kwd{Spectral projectors} \kwd{Effective rank} \kwd{Principal Component Analysis}
\kwd{Concentration inequalities} \kwd{Normal approximation} \kwd{Perturbation theory}
\end{keyword}

\end{frontmatter}

\section{Introduction}\label{Sec:Intro}

Let $X$ be a mean zero Gaussian random vector in a separable Hilbert space ${\mathbb H}$ with covariance operator 
$\Sigma={\mathbb E}(X\otimes X)$ and let $X_1,\dots, X_n$ be a sample of $n$ i.i.d. copies of $X.$ The sample covariance 
operator $\hat \Sigma=\hat \Sigma_n$ is defined as follows: $\hat \Sigma := \hat \Sigma_n:= n^{-1}\sum_{j=1}^n (X_j\otimes X_j).$ 
Denote by $\mu_r$ the $r$-th eigenvalue of $\Sigma$ (in a decreasing order) and by $P_r$ the corresponding spectral projector of $\Sigma$ (that is, the orthogonal projector on the eigenspace of eigenvalue $\mu_r$). Let $\hat P_r$ denote properly defined empirical counterpart of $P_r$ (see Section \ref{sec:pert} for a precise definition). The main goal of the 
paper is to obtain a tight bound on the accuracy of normal approximation of the distribution of the squared Hilbert--Schmidt 
norm error $\|\hat P_r-P_r\|_2^2$ of the estimator $\hat P_r.$ Another goal is to provide bounds on the risk ${\mathbb E}\|\hat P_r-P_r\|_2^2$ of this estimator as well as non-asymptotic bounds on concentration of random variables $\|\hat P_r-P_r\|_2^2$
around its expectation. These bounds will be expressed in terms of natural complexity parameters of the problem, the most 
important one being the so called {\it effective rank} ${\bf r}(\Sigma)$ that  has been recently used in the literature
(see \cite{Vershynin}, \cite{BuneaXiao}, \cite{Lounici2013}). 

\begin{definition}
The following quantity
$
{\bf r}(\Sigma):= \frac{{\rm tr}(\Sigma)}{\|\Sigma\|_{\infty}}
$
will be called the effective rank of $\Sigma.$
\end{definition}
 
Here ${\rm tr}(\Sigma)$ denotes the trace of $\Sigma$ and $\|\Sigma\|_{\infty}$ denotes its operator norm. 
The above definition clearly implies that ${\bf r}(\Sigma)\leq {\rm rank}(\Sigma).$ 
A recent result by 
Koltchinskii and Lounici, see \cite{Koltchinskii_Lounici_arxiv}, shows that, in the Gaussian case, the size of the operator norm 
error $\|\hat \Sigma-\Sigma\|_{\infty}$ of sample covariance $\hat \Sigma$ is completely characterized by $\|\Sigma\|_{\infty}$ 
and ${\bf r}(\Sigma).$ This makes the effective rank ${\bf r}(\Sigma)$ the crucial complexity parameter of the problems of  
estimation of covariance and its spectral characteristics (its principal components) that allows one to study
principal component analysis (PCA) problems in a unified dimension-free framework that includes their high-dimensional and infinite-dimensional versions  
(functional PCA, kernel PCA, etc).  As in the preceding paper \cite{Koltchinskii_Lounici_bilinear}, our goal is to study the problem 
in a ``high-complexity setting", where both the sample size $n$ and the effective rank ${\bf r}(\Sigma)$ are large, although our 
primary focus is on the case when ${\bf r}(\Sigma)=o(n)$ which implies operator norm consistency of both $\hat \Sigma$ and $\hat P_r.$ 
This setting is much closer to high-dimensional  covariance estimation and PCA problems than to standard results on PCA in Hilbert spaces with a fixed value of ${\rm tr}(\Sigma)$ (see, for instance, \cite{DPR}) that are commonly used in the literature on functional PCA and kernel PCA.  It includes, in particular, high-dimensional {\it spiked covariance models} (see \cite{Johnstone}, \cite{Johnstone_Lu}, \cite{Paul_2007}) in which 
\begin{equation}
\label{spike}
\Sigma= \sum_{j=1}^m s_j^2 (\theta_j\otimes \theta_j)+ \sigma^2 P_{p},
\end{equation}
where $\{\theta_j\}$ is an orthonormal basis of ${\mathbb H},$ $s_1^2>s_2^2>\dots >s_m^2$ 
are the variances of $m$ independent components of the ``signal", $\sigma^2$ is the variance 
of the noise components and $P_p:=\sum_{j=1}^p(\theta_j\otimes \theta_j)$ is the orthogonal projector on the linear span of the vectors $\theta_1,\dots, \theta_p,$ where $p>m.$ This models the covariance of a Gaussian signal with $m$ independent 
components observed in an independent Gaussian white noise. It is usually assumed that the number of components $m$
and the variances $s_1^2,\dots, s_m^2, \sigma^2$
are fixed, but the overall dimension of the problem $p=p_n\to \infty$ as $n\to \infty$ is large, implying that
$$
{\rm tr}(\Sigma)= \sum_{j=1}^m s_j^2 + \sigma^2 p \sim \sigma^2 p \to \infty\ {\rm as}\ n\to \infty
$$
and 
$
{\bf r}(\Sigma)\sim \frac{\sigma^2}{s_1^2+\sigma^2}p. 
$
Estimation of the components of the ``signal" $\theta_1,\dots, \theta_m$ 
is viewed as PCA for unknown covariance 
$\Sigma.$ It is common to consider a sequence of high-dimensional problems in spaces $\mathbb R^p, p=p_n$ 
(rather than explicitly embed the spaces ${\mathbb R}^p$ into an infinite dimensional Hilbert space ${\mathbb H}$).  
To assess the performance of the PCA, the loss function $L(a,b):= 2(1-|\langle a,b\rangle |),$ where $a,b \in \R^p$ are unit vectors,
was used in \cite{Birnbaumetal}.
A closely related loss function is defined by  
$L'(a,b):= \|a \otimes a - b \otimes b\|_2^2 = 2(1-\langle a,b\rangle^2),$
see, for instance, \cite{MaCaiYihong,Lounici2013,VuLei2012}. In the case of spiked covariance model with 
$\sigma^2 = 1$ and $\frac{p}{n}\rightarrow 0$ as $n\rightarrow \infty,$ 
the following asymptotic representation of the risk holds, \cite{Birnbaumetal}:
\begin{equation}
\label{Birnbaumresult}
\E L(\hat\theta_j ,\theta_j) = \left[  \frac{(p-m)(1+s_j^2)}{n s_j^4}  + \frac{1}{n}\sum_{k\neq j} \frac{(1+s_j^2)(1+s_k^2)}{(s_j^2 -s_k^2)^2}\right](1+o(1)), j=1,\dots, m.
\end{equation}
Under the assumption $\frac{p}{n}\rightarrow c>0$ as $n\rightarrow \infty$ the classical PCA is known to yield inconsistent estimators of the eigenvectors, see, e.g., \cite{Johnstone_Lu}. In \cite{Birnbaumetal}, a thresholding procedure in spirit of diagonal thresholding of Johnstone and Lu \cite{Johnstone_Lu} was proposed and it was proved that it achieves optimality in the minimax sense for the loss $L(\cdot,\cdot)$ under sparsity conditions on the eigenvectors of $\Sigma$. 

In this paper, we are not making any structural assumptions on the covariance operator $\Sigma,$ such as the spiked covariance 
model, sparsity, etc, but rather study the problem in terms of complexity parameter ${\bf r}(\Sigma).$
We derive representations of the Hilbert--Schmidt risk 
${\mathbb E}\|\hat P_r-P_r\|_2^2$ of empirical spectral projectors in the case when ${\bf r}(\Sigma)=o(n)$ that 
imply representation  (\ref{Birnbaumresult}) for spiked covariance model. Specifically, we prove 
that 
\begin{align}\label{Risk-mean}
{\mathbb E}\|\hat P_r-P_r\|_2^2=(1+o(1))\frac{A_r(\Sigma)}{n},
\end{align}
where $A_r(\Sigma)=2{\rm tr}(P_r\Sigma P_r){\rm tr}(C_r\Sigma C_r)$ and the operator 
$C_r$ is defined as $C_r:=\sum_{s\neq r}\frac{P_s}{\mu_r-\mu_s}.$ In addition, we show
that 
\begin{align}\label{Risk-var}
{\rm Var}(\|\hat P_r-P_r\|_2^2)= (1+o(1))\frac{B_r^2(\Sigma)}{n^2},
\end{align}
where $B_r(\Sigma):=2\sqrt{2}\|P_r\Sigma P_r\|_2\|C_r\Sigma C_r\|_2,$
and derive concentration bounds for random variable $\|\hat P_r-P_r\|_2^2$ around 
its expectation. One of the main results of the paper is the following bound on the accuracy
of normal approximation of random variable $\|\hat P_r-P_r\|_2^2$ that holds under rather mild assumptions:
\begin{align}
\label{normal_approx_intro}
&
\nonumber
\sup_{x\in {\mathbb R}}
\left|{\mathbb P}\left\{\frac{\|\hat P_r-P_r\|_2^2-{\mathbb E}\|\hat P_r-P_r\|_2^2}{{\rm Var}^{1/2}(\|\hat P_r-P_r\|_2^2)}\leq x\right\}-\Phi(x)\right|
\\
&
\leq 
C\left[\frac{1}{B_r(\Sigma)}+\frac{{\bf r}(\Sigma)}{B_r(\Sigma)\sqrt{n}}\sqrt{\log \left(\frac{B_r(\Sigma)\sqrt{n}}{{\bf r}(\Sigma)}\bigvee 2\right) }+ \frac{\log n}{\sqrt{n}}\right],
\end{align}
where $\Phi(x)$ denotes the standard normal distribution function. This bound implies that the distribution of random variable 
$\frac{\|\hat P_r-P_r\|_2^2-{\mathbb E}\|\hat P_r-P_r\|_2^2}{{\rm Var}^{1/2}(\|\hat P_r-P_r\|_2^2)}$ is asymptotically standard 
normal as soon as $n\to \infty,$ $B_r(\Sigma)\to \infty$ and $\frac{{\bf r}(\Sigma)}{B_r(\Sigma)\sqrt{n}}\to 0$
which, in particular, implies that ${\bf r}(\Sigma)=o(n)$. 

Throughout the paper, for $A,B>0,$ the notation $A\lesssim B$ means that there exists an absolute constant $C>0$
such that $A\leq C B.$ Similarly, $A\gtrsim B$ means that $A\geq CB$ for an absolute constant $C>0$ and $A\asymp B$
means that $A\lesssim B$ and $A\gtrsim B.$ In the cases when the constant $C$ in the above bounds might depend 
on some parameter(s), say, $\gamma,$ and we want to emphasize this dependence, we will write $A\lesssim_{\gamma} B,$ $A\gtrsim_{\gamma} B,$ or $A\asymp_{\gamma}B.$ 
Also, throughout the paper (as it was already done in the introduction), $\|\cdot\|_2$ denotes the Hilbert--Schmidt norm and $\|\cdot\|_{\infty}$ the operator norm of operators acting in ${\mathbb H}.$ 
With a minor abuse of notation, $\langle \cdot, \cdot \rangle$ denotes both the inner product of ${\mathbb H}$
and the Hilbert--Schmidt inner product.  We will also use the sign $\otimes$ to denote the tensor product. For instance,
for $u,v\in {\mathbb H},$ $u\otimes v$ is a linear operator in ${\mathbb H}$ defined as follows: $(u\otimes v)x=u \langle v,x\rangle, x\in {\mathbb H}.$

In what follows, we will frequently prove exponential bounds for certain random variables, say, $\xi,$ of the following type: for some constant $C>0$ and for all $t\geq 1,$ with probability at least $1-e^{-t},$ $\xi \leq C\sqrt{t}.$ Often, it will be proved instead that 
the inequality holds with probability, say, $1-2e^{-t}.$ In such cases, it is easy to rewrite the probability bound in the initial form 
by changing the value of the constant $C.$ For instance, replacing $t$ by $t+\log 2$ allows one to claim that with probability 
$1-e^{-t},$ $\xi \leq C\sqrt{t+\log 2}\leq C(1+\log 2)^{1/2}\sqrt{t}$ that holds for all $t\geq 1.$ In such cases, it will be said 
without further explanation that probability bound $1-e^{-2t}$ can be replaced by  $1-e^{-t}$ by adjusting the constants.

\section{Preliminaries}\label{Sec:Prelim}

In this section, we discuss recent bounds on the operator norm 
$\|\hat \Sigma_n-\Sigma\|_{\infty}$ obtained in \cite{Koltchinskii_Lounici_arxiv} and several well known results of perturbation theory used throughout the paper (see also \cite{Koltchinskii_Lounici_bilinear}). 

\subsection{Bounds on the operator norm $\|\hat \Sigma_n-\Sigma\|_{\infty}.$}\label{Sec:Spectral}

In \cite{Koltchinskii_Lounici_arxiv}, it was proved that, in the Gaussian case, 
moment bounds and concentration inequalities for the operator norm $\|\hat \Sigma-\Sigma\|_{\infty}$ are completely characterized 
by the operator norm $\|\Sigma\|_{\infty}$ and the effective rank ${\bf r}(\Sigma).$ More precisely, the following theorems hold.

\begin{theorem}
\label{th_operator}
Let $X,X_1,\ldots,X_n$ be i.i.d. centered Gaussian random vectors in ${\mathbb H}$ with covariance $\Sigma = \E(X\otimes X).$ Then, for all $p\geq 1,$ 
\begin{align}
\label{E1/p}
\E^{1/p}\|\hat\Sigma- \Sigma\|_{\infty}^p \asymp_{p} \|\Sigma\|_\infty\max\left\lbrace \sqrt{\frac{\mathbf{r}(\Sigma)}{n}}, \frac{\mathbf{r}(\Sigma)}{n}\right\rbrace.
\end{align}
\end{theorem}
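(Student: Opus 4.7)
The goal is to establish matching upper and lower bounds for $\mathbb{E}^{1/p}\|\hat\Sigma-\Sigma\|_\infty^p$ of order $\|\Sigma\|_\infty \max\{\sqrt{\mathbf{r}(\Sigma)/n},\, \mathbf{r}(\Sigma)/n\}$, with constants depending only on $p$. I would first pin down the expectation ($p=1$) and then upgrade to $L^p$ control via Gaussian concentration, treating upper and lower bounds separately.

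For the \emph{upper bound} on the expectation, I would start from the variational representation $\|\hat\Sigma-\Sigma\|_\infty = \sup_{\|u\|=1}|Q_u|$, where $Q_u := n^{-1}\sum_{j=1}^n(\langle X_j,u\rangle^2 - \langle \Sigma u,u\rangle)$. For each fixed $u$, $Q_u$ is a centered sum of sub-exponential chi-squared summands and Bernstein's inequality yields tails controlled by the variance $\langle \Sigma u,u\rangle^2/n$ and the sub-exponential scale $\langle \Sigma u,u\rangle/n$, both dominated by $\|\Sigma\|_\infty$. To control the supremum, I would apply generic chaining with the mixed Bernstein metric whose $L^2$ part is $\|\Sigma^{1/2}(u-v)\|\cdot\|\Sigma^{1/2}(u+v)\|/\sqrt{n}$ and whose $L^\infty$ part is the corresponding $n^{-1}$ product. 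Covering the $\Sigma$-unit ellipsoid and bounding Dudley-type integrals in each part produces the two matching scales, the square-root part scaling as $\|\Sigma\|_\infty\sqrt{\mathbf{r}(\Sigma)/n}$ (since $\sqrt{\mathrm{tr}(\Sigma^2)/n}\leq \|\Sigma\|_\infty\sqrt{\mathbf{r}(\Sigma)/n}$) and the linear part as $\|\Sigma\|_\infty\,\mathbf{r}(\Sigma)/n$. An alternative, perhaps cleaner, route is to truncate $\Sigma$ to its top finite-dimensional block, apply the intrinsic-dimension matrix Bernstein inequality of Tropp (with intrinsic dimension precisely $\mathbf{r}(\Sigma)$), and control the tail-block contribution by a union bound on $\|X_j\|^2$.

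To upgrade from $\mathbb{E}\|\hat\Sigma-\Sigma\|_\infty$ to the $L^p$ bound, I would apply Borell's Gaussian concentration to the map $(Z_1,\ldots,Z_n)\mapsto \|\hat\Sigma-\Sigma\|_\infty$ viewed as a function of the standard Gaussian coordinates of $X_j=\Sigma^{1/2}Z_j$ (taken, after finite-dimensional truncation, in the eigenbasis of $\Sigma$). This map is not globally Lipschitz, so I would localize: on the event $\max_j\|X_j\|^2 \lesssim \mathrm{tr}(\Sigma)+t\|\Sigma\|_\infty$ (of probability $\geq 1-e^{-t}$ by standard Gaussian norm tails), the Lipschitz constant can be controlled in such a way that Borell yields a sub-Gaussian deviation of order $\|\Sigma\|_\infty(\sqrt{\mathbf{r}(\Sigma)t/n^2}+\sqrt{t/n})$ plus a sub-exponential remainder of order $\|\Sigma\|_\infty t/n$. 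Integrating these tails against $t^{p-1}dt$ gives the claimed $L^p$ bound with $p$-dependent constant.

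For the matching \emph{lower bound}, I would split into two regimes. When $\mathbf{r}(\Sigma)/n \lesssim 1$, pick a well-separated net of directions inside the top eigenspace and use either a Sudakov-minoration-type argument or a direct computation on a subspace of dimension $\asymp \mathbf{r}(\Sigma)$, reducing to a Wishart operator norm lower bound of order $\|\Sigma\|_\infty\sqrt{\mathbf{r}(\Sigma)/n}$. When $\mathbf{r}(\Sigma)/n \gtrsim 1$, use that the largest eigenvalue of a Wishart-type operator restricted to a subspace of dimension $\asymp \mathbf{r}(\Sigma)$ is of order $\|\Sigma\|_\infty\mathbf{r}(\Sigma)/n$. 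The \textbf{main obstacle} I expect is the chaining step: one must track both the $L^2$ and the sub-exponential parts of the Bernstein metric on the $\Sigma$-unit ellipsoid so that the $\sqrt{\mathbf{r}(\Sigma)/n}$ and $\mathbf{r}(\Sigma)/n$ terms appear with \emph{dimension-free} constants, which is precisely what makes the effective rank the natural complexity parameter. The matrix-Bernstein route bypasses much of this, but one must still verify that the intrinsic-dimension factor really collapses to $\mathbf{r}(\Sigma)$ rather than $\mathrm{rank}(\Sigma)$.
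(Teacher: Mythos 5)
First, note that this paper does not prove Theorem \ref{th_operator} at all: it is imported verbatim from \cite{Koltchinskii_Lounici_arxiv}, so there is no in-paper proof to match. Your overall architecture for the upper bound (generic chaining for the quadratic process $u\mapsto n^{-1}\sum_j(\langle X_j,u\rangle^2-\langle \Sigma u,u\rangle)$ over the $\Sigma$-ellipsoid to get the expectation, then a localized Gaussian concentration argument to upgrade to $L^p$) is indeed the route taken in that reference. But two of your steps have genuine problems. The localization event $\max_j\|X_j\|^2\lesssim \mathrm{tr}(\Sigma)+t\|\Sigma\|_{\infty}$ is the wrong one: the Lipschitz constant of $Z\mapsto\|\hat\Sigma-\Sigma\|_{\infty}$ is governed by $\sup_{\|u\|=1}\bigl(\sum_j\langle X_j,u\rangle^2\bigr)^{1/2}=\sqrt{n}\,\|\hat\Sigma\|_{\infty}^{1/2}$, which on a good event is of order $\sqrt{n\|\Sigma\|_{\infty}+\mathrm{tr}(\Sigma)+t\|\Sigma\|_{\infty}}$, whereas your event only gives $\sqrt{n}\max_j\|X_j\|\asymp\sqrt{n\,\mathrm{tr}(\Sigma)}$; plugging the latter into Borell's inequality yields deviations of order $\|\Sigma\|_{\infty}\sqrt{\mathbf{r}(\Sigma)t/n}$, which is larger by a factor $\sqrt{n}$ than the $\|\Sigma\|_{\infty}\sqrt{\mathbf{r}(\Sigma)t}/n$ term you claim (and need). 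The fix is to localize on $\|\hat\Sigma\|_{\infty}$ itself, using the already-established first-moment bound. Also, the "cleaner" intrinsic-dimension matrix Bernstein alternative cannot give (\ref{E1/p}): it inevitably produces a $\log \mathbf{r}(\Sigma)$ factor in front of the $\sqrt{\mathbf{r}(\Sigma)/n}$ term, and the whole point of the theorem is that no such factor appears.

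The lower bound plan has a more serious conceptual gap. There is in general no subspace of dimension $\asymp\mathbf{r}(\Sigma)$ on which $\Sigma$ is comparable to $\|\Sigma\|_{\infty}$ times the identity: take $\Sigma=\mathrm{diag}(1,\varepsilon,\dots,\varepsilon)$ with $N$ entries equal to $\varepsilon$, so that $\mathbf{r}(\Sigma)=1+N\varepsilon$ can be arbitrarily large while every eigenvalue except the first is negligible. The restriction of the problem to the span of the top $\asymp\mathbf{r}(\Sigma)$ eigenvectors then has trace $\approx 1\ll\mathrm{tr}(\Sigma)$, so the Wishart/Sudakov argument on that subspace only yields $\|\Sigma\|_{\infty}/\sqrt{n}$, not $\|\Sigma\|_{\infty}\sqrt{\mathbf{r}(\Sigma)/n}$. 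The correct lower bound does not restrict to a subspace at all: for the $\sqrt{\mathbf{r}(\Sigma)/n}$ regime one bounds $\|\hat\Sigma-\Sigma\|_{\infty}\ge\|(\hat\Sigma-\Sigma)\theta_1\|$ for a top eigenvector $\theta_1$, computes ${\mathbb E}\|(\hat\Sigma-\Sigma)\theta_1\|^2\asymp n^{-1}\|\Sigma\|_{\infty}\mathrm{tr}(\Sigma)$, and applies a Paley--Zygmund argument; for the $\mathbf{r}(\Sigma)/n$ regime one uses the data-dependent test vector $u=X_1/\|X_1\|$ to get $\|\hat\Sigma\|_{\infty}\ge n^{-1}\|X_1\|^2\approx\mathrm{tr}(\Sigma)/n$. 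As written, your lower bound does not go through.
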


\begin{theorem}
\label{spectrum_sharper} 
Let $X,X_1,\ldots,X_n$ be i.i.d. centered Gaussian random vectors in ${\mathbb H}$ with  covariance $\Sigma = \E(X\otimes X).$  Then, there exist a constant 
$C>0$ such that for all $t\geq 1$ with probability at least $1-e^{-t},$ 
\begin{align}
 \label{con_con}
\Bigl|\|\hat\Sigma - \Sigma\|_{\infty}- {\mathbb E}\|\hat\Sigma - \Sigma\|_{\infty}\Bigr| \leq C\|\Sigma\|_\infty
\biggl[\biggl(\sqrt{\frac{\mathbf{r}(\Sigma)}{n}}\bigvee 1\biggr)\sqrt{\frac{t}{n}}\bigvee \frac{t}{n} 
\biggr].
\end{align}
As a consequence of this bound and (\ref{E1/p}), with some constant $C>0$ and with the same probability 
\begin{align}
\label{sha_sha}
\|\hat\Sigma - \Sigma\|_{\infty} \leq C\|\Sigma\|_\infty
\biggl[\sqrt{\frac{\mathbf{r}(\Sigma)}{n}} \bigvee \frac{\mathbf{r}(\Sigma)}{n} 
\bigvee \sqrt{\frac{t}{n}}\bigvee \frac{t}{n} 
\biggr].
\end{align}

\end{theorem}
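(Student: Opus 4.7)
The plan is to prove (\ref{con_con}) by applying the Borell--TIS Gaussian concentration inequality to $F(X) := \|\hat\Sigma - \Sigma\|_{\infty}$ viewed as a function of the Gaussian vector $X = (X_1,\dots,X_n) \in \mathbb H^n$. The principal obstacle is that $F$ is quadratic, not linear, in $X$, so it is only \emph{locally} Lipschitz and a localization argument is indispensable.

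The first ingredient is a local Lipschitz estimate. Starting from
\begin{equation*}
F(X) = \sup_{\|u\|\leq 1}\biggl|\frac{1}{n}\sum_{j=1}^n \langle X_j,u\rangle^2 - \langle \Sigma u,u\rangle\biggr|
\end{equation*}
and the identity $n(\hat\Sigma' - \hat\Sigma) = \sum_j(X_j\otimes\delta_j + \delta_j\otimes X_j + \delta_j\otimes\delta_j)$ with $\delta_j := X'_j - X_j$, two applications of Cauchy--Schwarz and the bound $\sum_j\langle X_j,v\rangle^2 = n\langle \hat\Sigma v,v\rangle \leq n\|\hat\Sigma\|_\infty$ yield
\begin{equation*}
|F(X+\delta) - F(X)| \leq 2\sqrt{\|\hat\Sigma\|_\infty/n}\,\|\delta\| + \|\delta\|^2/n,
\end{equation*}
where $\|\delta\|^2 := \sum_j \|\delta_j\|^2$. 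The moment bound (\ref{E1/p}) together with Markov's inequality for large $p$ imply that, for every $t \geq 1$, the event $E_t := \{\|\hat\Sigma\|_\infty \leq C\|\Sigma\|_\infty \max(1,\mathbf r(\Sigma)/n)\}$ has probability at least $1 - e^{-t}$; on $E_t$ the linear slope of the Lipschitz bound is $L \asymp \sqrt{\|\Sigma\|_\infty\max(1,\mathbf r(\Sigma)/n)/n}$.

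I would then extend $F$ to a globally $L$-Lipschitz proxy $\tilde F$ on $\mathbb H^n$ (by inf-convolution, say) that coincides with $F$ on $E_t$, and apply Gaussian concentration. Since the covariance of $X$ is block-diagonal with operator norm $\|\Sigma\|_\infty$, the Borell--TIS inequality gives
\begin{equation*}
\mathbb P\bigl(|\tilde F - \mathbb E \tilde F| \geq s\bigr) \leq 2\exp\bigl(-s^2/(2 L^2 \|\Sigma\|_\infty)\bigr),
\end{equation*}
and the choice $s \asymp L\|\Sigma\|_\infty^{1/2}\sqrt{t}$ produces exactly the leading term $C\|\Sigma\|_\infty(\sqrt{\mathbf r(\Sigma)/n}\vee 1)\sqrt{t/n}$ in (\ref{con_con}). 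The additive $C\|\Sigma\|_\infty t/n$ contribution comes from the quadratic remainder $\|\delta\|^2/n$ in the Lipschitz bound and is handled by a Hanson--Wright style chi-square tail. The passage from $\mathbb E \tilde F$ back to $\mathbb E F$ is routine from $\tilde F = F$ on $E_t$ together with (\ref{E1/p}), and (\ref{sha_sha}) follows from (\ref{con_con}) and (\ref{E1/p}) with $p = 1$ by the triangle inequality. The hard part is making the localization clean: the effective Lipschitz constant is itself data-dependent through $\|\hat\Sigma\|_\infty$, and a natural resolution is a short bootstrap in which a first, crude application of the above strategy tightens the definition of $E_t$ and is then refined into the sharp bound.
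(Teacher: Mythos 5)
Note first that this paper does not actually prove Theorem \ref{spectrum_sharper}: it is imported as a preliminary from the authors' earlier work \cite{Koltchinskii_Lounici_arxiv}, so there is no in-paper proof to compare against. Your strategy --- Gaussian concentration applied to a localized Lipschitz version of $\|\hat\Sigma-\Sigma\|_{\infty}$ --- is the standard route and is essentially the one used in the cited source; it is also the same device (truncation plus the isoperimetric Lemma \ref{Gaussian_concentration}) that this paper deploys for a different quadratic functional in the proof of Theorem \ref{technical_2}. Your local Lipschitz estimate and the derivation of the leading term are correct, as is the deduction of (\ref{sha_sha}) from (\ref{con_con}) and (\ref{E1/p}) via $\sqrt{\mathbf{r}(\Sigma)t}/n\leq \mathbf{r}(\Sigma)/n\vee t/n$.

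One step is misstated, although you flag the underlying difficulty yourself. The event $E_t=\{\|\hat\Sigma\|_\infty\leq C\|\Sigma\|_\infty\max(1,\mathbf r(\Sigma)/n)\}$ does not depend on $t$, so it cannot have probability at least $1-e^{-t}$ for every $t\geq 1$ unless it has probability one; and (\ref{E1/p}) with $p\asymp t$ only yields a threshold whose implicit constant grows with $p$. What one actually gets is $\|\hat\Sigma\|_\infty\leq C\|\Sigma\|_\infty\bigl(1\vee \mathbf r(\Sigma)/n\vee t/n\bigr)$ with probability $1-e^{-t}$. This does not break the method: inserting the $t$-dependent radius $R\asymp\|\Sigma\|_\infty(1\vee\mathbf r(\Sigma)/n\vee t/n)$ into $L\asymp\sqrt{R/n}$ gives $L\|\Sigma\|_\infty^{1/2}\sqrt t\asymp\|\Sigma\|_\infty\bigl[(\sqrt{\mathbf r(\Sigma)/n}\vee 1)\sqrt{t/n}\vee t/n\bigr]$, so the $t/n$ term emerges from the localization radius itself rather than only from a Hanson--Wright remainder, and the ``bootstrap'' you invoke becomes unnecessary. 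Two smaller points should be made explicit: (i) the restriction of $F$ to $\{\|\hat\Sigma\|_\infty\leq R\}$ is genuinely Lipschitz with constant $\asymp\sqrt{R/n}$ (for $\|\delta\|\leq\sqrt{Rn}$ absorb the quadratic term into the linear one; for $\|\delta\|>\sqrt{Rn}$ use the trivial bound $F\lesssim R$ on that set), which is what licenses the inf-convolution extension; (ii) passing from $\mathbb E\tilde F$ back to $\mathbb E F$ requires Cauchy--Schwarz on the complement of the localization event together with a second-moment bound, which (\ref{E1/p}) with $p=2$ supplies.
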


\subsection{Perturbation theory}\label{sec:pert}

Several simple and well known facts on perturbations of linear operators    
(see Kato \cite{Kato}) will be stated in a form suitable for our purposes. The proofs 
of some of these facts that seem not to be readily available in the literature were given in 
\cite{Koltchinskii_Lounici_bilinear} (see also Koltchinskii \cite{Koltchinskii} and Kneip and Utikal 
\cite{Kneip} for some bounds in the same direction).

Let $\Sigma :{\mathbb H}\mapsto {\mathbb H}$ be a compact symmetric operator 
(in our case, the covariance operator of a random vector $X$ in ${\mathbb H}$)
with the spectrum $\sigma(\Sigma).$ The following spectral representation is well known to hold 
with the series converging in the operator norm:
$
\Sigma = \sum_{r\geq 1}\mu_r P_r,
$
where $\mu_r$ denotes distinct non-zero eigenvalues of $\Sigma$ arranged in decreasing order and 
$P_r$ the corresponding spectral projectors.  
Denote by $\sigma_i=\sigma_i(\Sigma)$ the eigenvalues of $\Sigma$ arranged in nonincreasing order and repeated with their respective multiplicities. Let $\Delta_r = \{i\,:\,\sigma_i(\Sigma) = \mu_r \}$ and let $m_r:={\rm card}(\Delta_r)$ denote the multiplicity of $\mu_r$. 
Define 
$
g_r: = g_r(\Sigma) := \mu_r-\mu_{r+1}>0, r\geq 1.
$
Let  $\bar g_r := \bar g_r(\Sigma):= \min(g_{r-1},g_r)$ for $r\geq 2$ and 
$\bar g_1:=g_1.$ The quantity $\bar g_r$ will be called {\it the $r$-th  spectral gap, or the spectral gap of eigenvalue $\mu_r$}. 

Let now $\tilde \Sigma:=\Sigma+E$ be another compact symmetric operator in ${\mathbb H}$
with spectrum $\sigma (\tilde \Sigma)$ and eigenvalues $\tilde \sigma_i=\sigma_i(\tilde \Sigma), i\geq 1$
(arranged in nonincreasing order and repeated with their multiplicities), where $E$ is a perturbation 
of $\Sigma.$ By Lidskii's inequality,
$$
\sup_{j\geq 1}|\sigma_j(\Sigma)-\sigma_j(\tilde \Sigma)|\leq \sup_{j\geq 1}|\sigma_j(E)|
=\|E\|_{\infty}.
$$
Thus, for all $r\geq 1,$
\begin{align*}
\inf_{j\not\in \Delta_r}|\tilde \sigma_j - \mu_r| \geq 
\bar g_r - \sup_{j\geq 1}|\tilde \sigma_j - \sigma_j| \geq 
\bar g_r -\|E\|_{\infty} 
\end{align*}
and 
\begin{align*}
\sup_{j\in \Delta_r}|\tilde \sigma_j - \mu_r| = \sup_{j\in \Delta_r}|\tilde \sigma_j-\sigma_j|\leq \|E\|_{\infty}. 
\end{align*}
Assuming that the perturbation $E$ is small in the sense that 

$\|E\|_{\infty}< \frac{\bar g_r}{2},$

it is easy to conclude that all the eigenvalues $\tilde \sigma_j, j\in \Delta_r$ are covered by 
an interval 
$$\Bigl(\mu_r-\|E\|_{\infty},\mu_r+\|E\|_{\infty}\Bigr)\subset (\mu_r-\bar g_r/2,\mu_r+\bar g_r/2)$$ 
and the rest of the eigenvalues of 
$\tilde \Sigma$ are outside of the interval 
$$\Bigl(\mu_r-(\bar g_r-\|E\|_{\infty}),\mu_r+(\bar g_r-\|E\|_{\infty})\Bigr)
\supset [\mu_r-\bar g_r/2,\mu_r+\bar g_r/2].$$ 
Moreover, under the assumption 
$
\|E\|_{\infty}<\frac{1}{4}\min_{1\leq s\leq r}\bar g_s=:\bar \delta_r,
$
the set $\{\sigma_j(\tilde \Sigma): j\in \bigcup_{s=1}^r\Delta_s\}$ of  the largest eigenvalues of $\tilde \Sigma$ consists of $r$ ``clusters", the diameter of each cluster being strictly smaller than $2\bar \delta_r$ and the distance between any two clusters being larger than $2\bar \delta_r.$
Thus, it is possible to identify clusters of eigenvalues of $\tilde \Sigma$ corresponding to each of the $r$ largest distinct eigenvalues
$\mu_s, s=1,\dots, r$ of $\Sigma.$ Let $\tilde P_r$ be the orthogonal projector
on the direct sum of eigenspaces of $\tilde \Sigma$ corresponding to the eigenvalues 
$\tilde \sigma_j, j\in \Delta_r$ 
(to the $r$-th cluster of eigenvalues of $\tilde \Sigma$). 
The following ``partial resolvent" operator will be frequently used throughout the paper:
$
C_r:=\sum_{s\neq r}\frac{1}{\mu_r-\mu_s}P_s.
$

We will need a couple of lemmas proved in \cite{Koltchinskii_Lounici_bilinear} (see Lemmas 1 and 4 therein):

\begin{lemma}\label{lem-pert-spectral}
The following bound holds: 
\begin{align}
\label{bd_1}
\|\tilde P_r-P_r\|_\infty \leq 4\frac{\|E\|_{\infty}}{\bar g_r}.
\end{align}
Moreover, 
\begin{equation}
\label{perture}
\tilde P_r-P_r =L_r(E)+ S_r(E),
\end{equation}
where 
\begin{align}
\label{linear_perturb}
L_r(E):=C_r E P_r + P_r E C_r
\end{align}
and 
\begin{equation}
\label{remainder_A}
\|S_r(E)\|_{\infty}\leq 14 \biggl(\frac{\|E\|_{\infty}}{\bar g_r}\biggr)^2.
\end{equation}
\end{lemma}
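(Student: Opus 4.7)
The plan is to prove both (\ref{bd_1}) and the decomposition (\ref{perture})--(\ref{remainder_A}) by a single Riesz-projector/Neumann-series argument, carried out on a contour $\gamma_r$ chosen to be the positively oriented circle of radius $\bar g_r/2$ centered at $\mu_r$. I start from the contour representations
$$
P_r=-\frac{1}{2\pi i}\oint_{\gamma_r}(\Sigma-zI)^{-1}\,dz,\qquad \tilde P_r=-\frac{1}{2\pi i}\oint_{\gamma_r}(\tilde\Sigma-zI)^{-1}\,dz.
$$
The Weyl/Lidskii estimates already collected in the preceding discussion show that, provided $\|E\|_{\infty}<\bar g_r/2$, the only eigenvalues of $\tilde\Sigma$ inside $\gamma_r$ are those in the $r$-th cluster, so the second identity is meaningful. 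If instead $\|E\|_{\infty}\geq \bar g_r/4$, the bound (\ref{bd_1}) is automatic from $\|\tilde P_r-P_r\|_{\infty}\leq 1$, so I may freely restrict to the small-perturbation regime.

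Next I would expand $(\tilde\Sigma-zI)^{-1}$ via the factorization $\tilde\Sigma-zI=(\Sigma-zI)(I+R_zE)$, with $R_z:=(\Sigma-zI)^{-1}$, into the Neumann series $(\tilde\Sigma-zI)^{-1}=\sum_{k\geq 0}(-R_zE)^k R_z$, which converges uniformly on $\gamma_r$ because $\|R_z\|_{\infty}\leq 2/\bar g_r$ there and $\|E\|_{\infty}<\bar g_r/2$. Subtracting $R_z$ and integrating term by term gives
$$
\tilde P_r-P_r=\sum_{k\geq 1}(-1)^{k+1}\frac{1}{2\pi i}\oint_{\gamma_r}(R_zE)^k R_z\,dz.
$$
Inserting the spectral resolution $R_z=\sum_s(\mu_s-z)^{-1}P_s$ into the $k=1$ term reduces that integral to a sum of scalar contour integrals $\frac{1}{2\pi i}\oint_{\gamma_r}\frac{dz}{(\mu_s-z)(\mu_t-z)}$ multiplied by $P_sEP_t$. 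A routine residue computation yields $0$ when both indices equal $r$ (the double pole is killed by Cauchy's theorem) or when neither equals $r$ (no poles inside $\gamma_r$), and respectively $(\mu_r-\mu_t)^{-1}$ or $(\mu_r-\mu_s)^{-1}$ when exactly one of $s,t$ equals $r$. Summing over the nonzero cases and recognizing $C_r$ produces precisely $C_rEP_r+P_rEC_r=L_r(E)$.

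For the quadratic remainder $S_r(E)=\sum_{k\geq 2}(-1)^{k+1}\frac{1}{2\pi i}\oint_{\gamma_r}(R_zE)^kR_z\,dz$, the uniform estimate $\|R_z\|_{\infty}\leq 2/\bar g_r$ on $\gamma_r$ together with contour length $\pi\bar g_r$ gives
$$
\Bigl\|\tfrac{1}{2\pi i}\oint_{\gamma_r}(R_zE)^kR_z\,dz\Bigr\|_{\infty}\leq\Bigl(\frac{2\|E\|_{\infty}}{\bar g_r}\Bigr)^{\!k}.
$$
Summing the geometric tail from $k=2$ in the range $2\|E\|_{\infty}/\bar g_r\leq 1/2$ gives $\|S_r(E)\|_{\infty}\leq 8(\|E\|_{\infty}/\bar g_r)^2$, which is already sharper than (\ref{remainder_A}); in the complementary range $\|E\|_{\infty}/\bar g_r>1/4$ the constant $14$ is recovered from the trivial estimate $\|S_r(E)\|_{\infty}\leq \|\tilde P_r-P_r\|_{\infty}+\|L_r(E)\|_{\infty}\leq 1+2\|E\|_{\infty}/\bar g_r$, where $\|C_r\|_{\infty}=1/\bar g_r$ was used. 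Combining the bound on $L_r(E)$ with the quadratic bound on $S_r(E)$ then yields (\ref{bd_1}).

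The main obstacle is neither deep nor conceptual—the Riesz projector and Neumann series are entirely standard—but is rather the careful bookkeeping needed to land on the precise numerical constants $4$ and $14$ across the two complementary ranges of $\|E\|_{\infty}/\bar g_r$, together with the need to verify in the residue computation that the four cases assemble into the algebraically simple expression $C_rEP_r+P_rEC_r$ rather than a more cumbersome combination.
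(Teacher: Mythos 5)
Your contour-integral/Neumann-series argument is the standard one and is essentially the proof given in the companion paper \cite{Koltchinskii_Lounici_bilinear}, which is what the present paper cites for this lemma. The choice of the circle of radius $\bar g_r/2$ around $\mu_r$, the estimate $\|R_z\|_{\infty}\leq 2/\bar g_r$ on it, the residue computation identifying the first-order term with $C_rEP_r+P_rEC_r$, the bound $\bigl\|\frac{1}{2\pi i}\oint_{\gamma_r}(R_zE)^kR_z\,dz\bigr\|_{\infty}\leq (2\|E\|_{\infty}/\bar g_r)^k$, and the deduction of (\ref{bd_1}) from the decomposition are all correct. One convention should be made explicit: inserting $R_z=\sum_s(\mu_s-z)^{-1}P_s$ presumes that the eigenprojectors resolve the identity; if $\Sigma$ has a nontrivial kernel, the resolvent carries an additional term $-z^{-1}\bigl(I-\sum_sP_s\bigr)$ whose residue at $\mu_r$ contributes $\mu_r^{-1}(QEP_r+P_rEQ)$ with $Q=I-\sum_sP_s$, so $C_r$ must be understood to include the $0$-eigenvalue term.

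There is, however, one step that fails as written: the verification of the constant $14$ in (\ref{remainder_A}). Set $u:=\|E\|_{\infty}/\bar g_r$. You sum the geometric tail only when $2u\leq 1/2$, obtaining $8u^2$, and claim that for $u>1/4$ the trivial estimate $\|S_r(E)\|_{\infty}\leq \|\tilde P_r-P_r\|_{\infty}+\|L_r(E)\|_{\infty}\leq 1+2u$ gives the result. But $1+2u\leq 14u^2$ holds only for $u\geq (1+\sqrt{15})/14\approx 0.348$; at $u=1/4$ the left side equals $1.5$ while $14u^2=0.875$, so the interval $u\in\bigl(1/4,(1+\sqrt{15})/14\bigr)$ is not covered by either of your two bounds. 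The gap is easily repaired: keep the full geometric sum $\sum_{k\geq 2}(2u)^k=\frac{4u^2}{1-2u}$, which is valid for all $u<1/2$ and satisfies $\frac{4u^2}{1-2u}\leq 14u^2$ precisely when $u\leq 5/14\approx 0.357$. Since $(1+\sqrt{15})/14<5/14$, the two regimes overlap and (\ref{remainder_A}) holds for every $u$ (for $u\geq 5/14$ only the trivial bound is needed, and it does not require convergence of the series). With this correction the proof is complete.
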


\begin{lemma}\label{lipSr}
Let $\gamma \in (0,1)$ and suppose that 
\begin{equation}
\label{delt_le}
\delta \leq \frac{1-\gamma}{1+\gamma}\frac{\bar g_r}{2}.
\end{equation}
Suppose also that 
\begin{equation}
\label{E_le}
\|E\|_{\infty}\leq (1+\gamma)\delta\ {\rm and}\ \|E'\|_{\infty}\leq (1+\gamma)\delta.
\end{equation}
Then, there exists a constant $C_{\gamma}>0$ such that 
\begin{equation}
\label{Srlip}
\|S_r(E)-S_r(E')\|_{\infty}\leq C_{\gamma}\frac{\delta}{\bar g_r^2}\|E-E'\|_{\infty}.
\end{equation}
\end{lemma}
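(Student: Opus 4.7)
The natural route is via the Riesz/contour representation of the spectral projector. Choose $\gamma_r$ to be the positively oriented circle in $\mathbb{C}$ centered at $\mu_r$ of radius $\bar g_r/2$. Under the assumption (\ref{delt_le}) combined with (\ref{E_le}), we have $\|E\|_\infty, \|E'\|_\infty \le (1+\gamma)\delta \le (1-\gamma)\bar g_r/2 < \bar g_r/2$, so by Lidskii's inequality the spectra of $\tilde\Sigma$ and $\tilde\Sigma'=\Sigma+E'$ split across $\gamma_r$ in the same way as $\Sigma$, and the contour avoids $\sigma(\tilde\Sigma)\cup\sigma(\tilde\Sigma')\cup\sigma(\Sigma)$. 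Writing $R_\Sigma(z):=(zI-\Sigma)^{-1}$, on $\gamma_r$ we have $\|R_\Sigma(z)\|_\infty\le 2/\bar g_r$.

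Next I expand the perturbed resolvent via the Neumann series (convergent since $\|R_\Sigma(z)E\|_\infty\le (1-\gamma)<1$ on $\gamma_r$):
\begin{equation*}
R_{\tilde\Sigma}(z) \;=\; \sum_{k=0}^{\infty}\bigl[R_\Sigma(z)\,E\bigr]^k R_\Sigma(z).
\end{equation*}
Integrating term by term and using the analogous identity for $E'$ yields
\begin{equation*}
S_r(E)-S_r(E')\;=\;\sum_{k=2}^{\infty}\frac{1}{2\pi i}\oint_{\gamma_r}\Bigl\{[R_\Sigma(z)E]^k R_\Sigma(z)-[R_\Sigma(z)E']^k R_\Sigma(z)\Bigr\}\,dz,
\end{equation*}
since the $k=0$ term cancels and the $k=1$ term is exactly $L_r(E)-L_r(E')$, both of which drop out of the difference $S_r(E)-S_r(E')$.

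Now I apply the telescoping identity
\begin{equation*}
[RE]^k R - [RE']^k R \;=\; \sum_{j=0}^{k-1}[RE]^{j}\,R\,(E-E')\,[RE']^{k-1-j}\,R,
\end{equation*}
(with $R=R_\Sigma(z)$), which produces $k$ summands. Each summand is bounded in operator norm by $(2/\bar g_r)^{k+1}[(1+\gamma)\delta]^{k-1}\|E-E'\|_\infty$. Combined with the length $\pi\bar g_r$ of $\gamma_r$, the $k$-th contour integral is at most
\begin{equation*}
\frac{\bar g_r}{2}\cdot k\Bigl(\frac{2}{\bar g_r}\Bigr)^{k+1}[(1+\gamma)\delta]^{k-1}\|E-E'\|_\infty
\;=\;\frac{k}{\bar g_r}\Bigl(\frac{2(1+\gamma)\delta}{\bar g_r}\Bigr)^{k-1}\cdot\frac{2}{\bar g_r}\|E-E'\|_\infty.
\end{equation*}

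Finally, summing the series: setting $q:=2(1+\gamma)\delta/\bar g_r\le 1-\gamma$, the total bound is
\begin{equation*}
\|S_r(E)-S_r(E')\|_\infty \;\le\; \frac{2\|E-E'\|_\infty}{\bar g_r^2}\sum_{k\ge 2}k\,q^{k-1}\cdot\bar g_r\cdot\frac{1}{\bar g_r}\;\lesssim\;\frac{q}{(1-q)^2}\cdot\frac{\|E-E'\|_\infty}{\bar g_r}\;\lesssim_\gamma\;\frac{\delta}{\bar g_r^2}\|E-E'\|_\infty,
\end{equation*}
using $(1-q)^{-2}\le \gamma^{-2}$ and $q\asymp \delta/\bar g_r$, which yields (\ref{Srlip}). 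The only delicate point—and the one worth writing out carefully—is justifying that the chosen contour simultaneously separates the perturbed spectra from $\mu_r$ uniformly in both $E$ and $E'$, so that the series manipulations are legitimate; everything else is bookkeeping of geometric sums with the explicit $\gamma$-dependence tracked into $C_\gamma$.
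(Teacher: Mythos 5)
The paper does not actually prove this lemma itself --- it only cites Lemma 4 of \cite{Koltchinskii_Lounici_bilinear} --- and the proof there is the same Riesz contour-integral/Neumann-series argument you give (circle of radius $\bar g_r/2$ about $\mu_r$, spectral separation via Lidskii, cancellation of the $k=0,1$ terms, telescoping of $[RE]^k-[RE']^k$), so your approach is correct and essentially identical. One bookkeeping slip to fix: your displayed ``equality'' for the $k$-th contour integral is off by a factor of $\bar g_r$ --- the left-hand side equals $\frac{2k}{\bar g_r}\,q^{\,k-1}\|E-E'\|_{\infty}$ rather than $\frac{2k}{\bar g_r^{2}}\,q^{\,k-1}\|E-E'\|_{\infty}$ --- a typo that is silently compensated in the next display and does not affect the conclusion, since summing the correct per-$k$ bound gives $\frac{2}{\bar g_r}\|E-E'\|_{\infty}\sum_{k\geq 2}k\,q^{\,k-1}\lesssim_{\gamma}\frac{q}{\bar g_r}\|E-E'\|_{\infty}\lesssim \frac{\delta}{\bar g_r^{2}}\|E-E'\|_{\infty}$ as required.
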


\section{Bounds on the risk of empirical spectral projectors}

Let $\hat P_r$ be the orthogonal projector on the direct sum 
of eigenspaces of $\hat \Sigma$ corresponding to the eigenvalues 
$\{\sigma_j(\hat \Sigma), j\in \Delta_r\}$ (in other words, to the $r$-th 
cluster of eigenvalues of $\hat \Sigma,$ see Section \ref{sec:pert}). 

We will state simple bounds for the bias ${\mathbb E}\hat P_r-P_r$ 
and the ``variance'' ${\mathbb E}\|\hat P_r-{\mathbb E}\hat P_r\|_2^2$ that immediately 
imply a representation of the risk ${\mathbb E}\|\hat P_r-P_r\|_2^2.$

Denote 
\begin{equation}
\label{defineAn}
A_r(\Sigma):= 2{\rm tr}(P_r\Sigma P_r){\rm tr}(C_r\Sigma C_r).
\end{equation}
It is easy to see that 
\begin{equation}
\label{A_n_bou}
A_r(\Sigma) \leq 2\frac{m_r \mu_r}{\bar g_r^2} \|\Sigma\|_{\infty}{\bf r}(\Sigma)
\end{equation}
and 
\begin{equation}
\label{A_n_bou'}
A_r(\Sigma) \geq  2\left(\frac{m_r \mu_r}{\|\Sigma\|_{\infty}} {\bf r}(\Sigma)-\frac{m_r\mu_r^2}{\|\Sigma\|_{\infty}^2}\right),
\end{equation}
which implies that 
\begin{equation}
\label{A_n_bou''}
A_r(\Sigma)\asymp {\bf r}(\Sigma)
\end{equation}
(assuming that $\|\Sigma\|_{\infty}$ and $m_r$ are bounded away both from $0$ and 
from $\infty,$ $\bar g_r$ is bounded away from $0$  
and ${\bf r}(\Sigma)\to \infty$).

\begin{theorem}
\label{risk_bd}
The following bounds hold:
\begin{enumerate}
 \item 
\begin{equation}
\label{bias_A}
\|{\mathbb E}\hat P_r-P_r\|_{\infty}\lesssim \frac{\|\Sigma\|_{\infty}^2}{\bar g_r^2}
\biggl(\frac{{\bf r}(\Sigma)}{n}\bigvee \biggl(\frac{{\bf r}(\Sigma)}{n}\biggr)^2\biggr)
\end{equation}
and 
\begin{equation}
\label{bias_B}
\|{\mathbb E}\hat P_r-P_r\|_{2}\lesssim \sqrt{m_r}\frac{\|\Sigma\|_{\infty}^2}{\bar g_r^2}
\biggl(\frac{{\bf r}(\Sigma)}{n}\bigvee \biggl(\frac{{\bf r}(\Sigma)}{n}\biggr)^2\biggr). 
\end{equation}
\item
In addition,  
\begin{equation}
\label{var_var_A}
{\mathbb E}\|\hat P_r-{\mathbb E}\hat P_r\|_2^2= \frac{A_r(\Sigma)}{n} + \rho_n, 
\end{equation}
where 
\begin{equation}
\label{var_var_B}
|\rho_n| 
\leq 
\frac{m_r \|\Sigma\|_{\infty}^4}{\bar g_r^4}
\biggl(\biggl(\frac{{\bf r}(\Sigma)}{n}\biggr)^{3/2}\bigvee 
\biggl(\frac{{\bf r}(\Sigma)}{n}\biggr)^4
\biggr). 
\end{equation}
\item
If $\Sigma=\Sigma^{(n)},$ the sequences $\|\Sigma^{(n)}\|_{\infty}$ and $m_r=m_r^{(n)}$ are both bounded away from $0$ and from 
$\infty,$ $\bar g_r=\bar g_r^{(n)}$ is bounded away from $0,$ and 
$$
{\bf r}(\Sigma)=o(n),
$$ 
then the following representation holds:
\begin{equation}
\label{risk_bou}
{\mathbb E}\|\hat P_r-P_r\|_2^2= \frac{A_r(\Sigma)}{n} + O\biggl(\biggl(\frac{{\bf r}(\Sigma)}{n}\biggr)^{3/2}\biggr)= (1+o(1))\frac{A_r(\Sigma)}{n}.
\end{equation}
\end{enumerate}
\end{theorem}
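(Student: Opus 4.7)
The entire argument rests on the perturbation expansion of Lemma~\ref{lem-pert-spectral}, $\hat P_r - P_r = L_r(E) + S_r(E)$ with $E := \hat\Sigma - \Sigma$, valid on the ``good'' event $\mathcal{G} := \{\|E\|_\infty < \bar g_r/2\}$. Outside $\mathcal{G}$ I would rely on the crude bounds $\|\hat P_r - P_r\|_\infty \le 2$ and $\|L_r(E)\|_\infty \le 2\|E\|_\infty/\bar g_r$, combined with the sharp tail bound of Theorem~\ref{spectrum_sharper}: choosing $t \asymp n(\bar g_r/\|\Sigma\|_\infty)^2$ makes $\mathbb{P}(\mathcal{G}^c)$ exponentially small in $n$ in the parameter regime where the bounds to be proved are nontrivial, so bad-event contributions are negligible relative to the main estimates.

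For part~(1), linearity of $L_r$ and $\mathbb{E}E = 0$ give $\mathbb{E}L_r(E) = 0$, whence
\[
\mathbb{E}(\hat P_r - P_r) = \mathbb{E}[S_r(E)\mathbf{1}_{\mathcal{G}}] - \mathbb{E}[L_r(E)\mathbf{1}_{\mathcal{G}^c}] + \mathbb{E}[(\hat P_r - P_r)\mathbf{1}_{\mathcal{G}^c}].
\]
Using (\ref{remainder_A}) and Theorem~\ref{th_operator} with $p=2$, the first term is controlled in operator norm by a constant multiple of $\|\Sigma\|_\infty^2/\bar g_r^2 \cdot (\mathbf{r}(\Sigma)/n \vee (\mathbf{r}(\Sigma)/n)^2)$, which is exactly (\ref{bias_A}); the bad-event terms are absorbed via the tail estimate above. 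The Hilbert--Schmidt bound (\ref{bias_B}) then follows from the observation that $\hat P_r - P_r$, and hence $\mathbb{E}\hat P_r - P_r$, is a difference of two rank-$m_r$ projectors and thus has rank at most $2m_r$, giving $\|\cdot\|_2 \le \sqrt{2m_r}\,\|\cdot\|_\infty$.

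The key step in part~(2) is the exact computation $\mathbb{E}\|L_r(E)\|_2^2 = A_r(\Sigma)/n$. Since $C_rP_r = 0$, the cross term $\langle C_rEP_r, P_rEC_r\rangle$ vanishes and $\|L_r(E)\|_2^2 = 2\,\mathrm{tr}(C_r E P_r E C_r)$. Expanding in an eigenbasis of $\Sigma$ and applying Isserlis's formula to the Gaussian fourth moments yields $\mathbb{E}(EP_rE) = n^{-1}[\mathrm{tr}(P_r\Sigma P_r)\Sigma + \Sigma P_r\Sigma]$; the second summand dies against the outer $C_r$ factors (again $C_rP_r = 0$), leaving $\mathbb{E}\|L_r(E)\|_2^2 = 2n^{-1}\,\mathrm{tr}(P_r\Sigma P_r)\,\mathrm{tr}(C_r\Sigma C_r) = A_r(\Sigma)/n$. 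Setting $R_r(E) := \hat P_r - P_r - L_r(E)$, I would then expand
\[
\mathbb{E}\|\hat P_r - \mathbb{E}\hat P_r\|_2^2 = \mathbb{E}\|L_r(E)\|_2^2 + 2\,\mathbb{E}\langle L_r(E), R_r(E) - \mathbb{E}R_r(E)\rangle + \mathbb{E}\|R_r(E) - \mathbb{E}R_r(E)\|_2^2,
\]
bound the quadratic remainder using $\|S_r(E)\|_2^2 \lesssim m_r\,\|E\|_\infty^4/\bar g_r^4$ on $\mathcal{G}$ (since $S_r(E)$ has rank at most $4m_r$) together with Theorem~\ref{th_operator} at $p=4$, and control the cross term by Cauchy--Schwarz. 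The cross term dominates and produces the $(\mathbf{r}/n)^{3/2}$ rate in (\ref{var_var_B}); bad-event contributions are again absorbed by Theorem~\ref{spectrum_sharper}.

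Part~(3) is then immediate from the identity $\mathbb{E}\|\hat P_r - P_r\|_2^2 = \mathbb{E}\|\hat P_r - \mathbb{E}\hat P_r\|_2^2 + \|\mathbb{E}\hat P_r - P_r\|_2^2$, parts~(1) and (2), and the comparison $A_r(\Sigma) \asymp \mathbf{r}(\Sigma)$ in (\ref{A_n_bou''}), since the squared bias is $O((\mathbf{r}/n)^2)$ and $\rho_n = O((\mathbf{r}/n)^{3/2})$, both of lower order than $A_r(\Sigma)/n \asymp \mathbf{r}/n$. The main obstacle is the bookkeeping in the Gaussian moment computation for $\mathbb{E}\|L_r(E)\|_2^2$ (all Isserlis cancellations must be tracked) together with the need to invoke the sharp tail bound of Theorem~\ref{spectrum_sharper}---not merely the moment bound of Theorem~\ref{th_operator}---in order to kill the $\mathcal{G}^c$ contributions at the required polynomial rate.
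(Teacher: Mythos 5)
Your proposal is correct and follows the paper's argument in its essential structure: the perturbation decomposition $\hat P_r-P_r=L_r(E)+S_r(E)$, the observation ${\mathbb E}L_r(E)=0$, rank bounds to pass from operator to Hilbert--Schmidt norms, an exact computation of ${\mathbb E}\|L_r(E)\|_2^2=A_r(\Sigma)/n$, Cauchy--Schwarz for the cross term (which indeed produces the dominant $({\bf r}(\Sigma)/n)^{3/2}$ rate), and the bias--variance decomposition for Claim 3. Two points of divergence are worth noting. First, you condition on the good event $\{\|E\|_{\infty}<\bar g_r/2\}$ and invoke the sharp tail bound of Theorem \ref{spectrum_sharper} to control the complement; this is workable (and your sketch of why ${\mathbb P}({\mathcal G}^c)$ is dominated by the target bound in the nontrivial regime is essentially right), but it is unnecessary here: the bounds (\ref{bd_1}) and (\ref{remainder_A}) of Lemma \ref{lem-pert-spectral} are stated, and hold, \emph{unconditionally} (they become trivially true when $\|E\|_{\infty}\geq \bar g_r/2$ since $\|\hat P_r-P_r\|_{\infty}\leq 2$), so one may simply write $\|{\mathbb E}\hat P_r-P_r\|_{\infty}\leq 14\,{\mathbb E}\|E\|_{\infty}^2/\bar g_r^2$ and apply the moment bound of Theorem \ref{th_operator} directly, with no event splitting and no need for Theorem \ref{spectrum_sharper} at all in this proof. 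Second, you compute ${\mathbb E}\|L_r(E)\|_2^2$ via Wick's formula applied to ${\mathbb E}(EP_rE)$ and the cancellation $C_r\Sigma P_r=0$; the paper instead writes $P_rEC_r=n^{-1}\sum_j P_rX_j\otimes C_rX_j$ and exploits the independence of the Gaussian vectors $P_rX$ and $C_rX$ to factor ${\mathbb E}\|P_rX\|^2\|C_rX\|^2$. Both routes give $A_r(\Sigma)/n$; the independence argument is slightly lighter on bookkeeping and is reused later in the paper (Lemma \ref{repr_d}), while your Wick computation is self-contained and would extend to non-factoring situations.
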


\begin{rema} 
In the case of spiked covariance model (\ref{spike}) for all $r=1,\dots, m,$ 
$$
A_r(\Sigma)=2\left(\frac{(p-m)(s_r^2+\sigma^2)}{s_r^4}+ \sum_{1\leq j\leq m, j\neq r}\frac{(s_j^2+\sigma^2)(s_r^2+\sigma^2)}{(s_r^2-s_j^2)^2}\right).
$$
Assuming that $m, s_1^2,\dots, s_m^2, \sigma^2$ are fixed, $p\to \infty$ and $p=o(n)$ as $n\to \infty,$ it is easy to check that (\ref{risk_bou}) implies bound (\ref{Birnbaumresult}) obtained in \cite{Birnbaumetal}. 

\end{rema}

\begin{proof}
Recall the following relationship (see Lemma \ref{lem-pert-spectral})
\begin{equation}
\label{rep_rep_A}
\hat P_r-P_r =L_r(E)+ S_r(E),
\end{equation}
where $E:=\Hat \Sigma-\Sigma,$
$
L_r(E):=C_r E P_r + P_r E C_r
$
and
$
S_r(E) := \hat P_r-P_r-L_r(E).
$
Clearly, $C_rP_r=P_rC_r=0$ (due to the orthogonality of $P_r$ and $P_s, s\neq r$). Also,  $P_r X$ and $C_r X$ are independent random variables (since, by the same orthogonality property, they are uncorrelated and $X$ is Gaussian). 

To prove Claim 1, note that, 
since ${\mathbb E}L_r(E)=0,$ we have  
$
{\mathbb E}\hat P_r-P_r={\mathbb E}S_r(E).
$
Therefore, by bound (\ref{remainder_A}) of Lemma \ref{lem-pert-spectral}, we get 
\begin{equation}
\label{bou_sre}
\|{\mathbb E}\hat P_r-P_r\|_{\infty}\leq 
{\mathbb E}\|S_r(E)\|_{\infty}\leq 14 \frac{{\mathbb E}\|E\|_{\infty}^2}{\bar g_r^2}.
\end{equation}
Bound (\ref{bias_A}) now follows from Theorem \ref{th_operator}.  
Bound (\ref{bias_B}) is also obvious since $\hat P_r, P_r$ are 
operators of rank $m_r,$ $L_r(E)$ is of rank at most $2m_r$ and 
$S_r(E)=\hat P_r-P_r-L_r(E)$ is of rank at most $4m_r.$ Thus,
$\|S_r(E)\|_2\lessim \sqrt{m_r}\|S_r(E)\|_{\infty},$ and 
the result follows from the previous bounds.

To prove Claim 2, note that 
$
\hat P_r-{\mathbb E}\hat P_r=L_r(E)+S_r(E)-{\mathbb E}S_r(E).
$
Therefore,
\begin{equation}
\label{gu_gu}
\|\hat P_r-{\mathbb E}\hat P_r\|_2^2=\|L_r(E)\|_2^2+\|S_r(E)-{\mathbb E}S_r(E)\|_2^2
+2\Bigl\langle L_r(E),S_r(E)-{\mathbb E}S_r(E)\Bigr\rangle. 
\end{equation}
The following representations are obvious: 
\begin{equation}
\label{CEP}
C_r E P_r = n^{-1}\sum_{j=1}^n C_r X_j \otimes P_r X_j,\ \ 
P_r E C_r = n^{-1}\sum_{j=1}^n P_r X_j \otimes C_r X_j. 
\end{equation}
Note that, by (\ref{CEP}), due to orthogonality of $C_r E P_r, P_r E C_r$ and due to independence of $P_r X, C_r X,$
\begin{align}
\label{ga_ga}
&
\nonumber
{\mathbb E}\|L_r(E)\|_2^2= {\mathbb E}\|C_r EP_r + P_r E C_r\|_2^2
= {\mathbb E}\Bigl(\|C_r E P_r\|_2^2 + \|P_r E C_r\|_2^2\Bigr)=
2 {\mathbb E}\|C_r E P_r\|_2^2 
\\
&
\nonumber
=2{\mathbb E}\biggl\|n^{-1}\sum_{j=1}^n P_rX_j\otimes C_r X_j\biggr\|_2^2
=\frac{2{\mathbb E}\|P_r X\otimes C_r X\|_2^2}{n} 
= \frac{2{\mathbb E}\|P_r X\|^2 \|C_r X\|^2}{n}
\\
&
=\frac{2{\mathbb E}\|P_r X\|^2 {\mathbb E}\|C_r X\|^2}{n}= 
\frac{2{\rm tr}(P_r\Sigma P_r){\rm tr}(C_r\Sigma C_r)}{n}=\frac{A_r(\Sigma)}{n}.
\end{align}

Next, note that 
$
{\mathbb E}\|S_r(E)-{\mathbb E}S_r(E)\|_2^2\leq {\mathbb E}\|S_r(E)\|_2^2.
$
Recall that $S_r(E)$ is of rank $\leq 4m_r$ and $\|S_r(E)\|_2^2\leq 4m_r\|S_r(E)\|_{\infty}^2.$ Quite similarly to (\ref{bou_sre}), one 
can prove that 
$
{\mathbb E}\|S_r(E)\|_{\infty}^2\lesssim \frac{1}{\bar g_r^4} {\mathbb E}\|E\|_{\infty}^4.
$
Therefore, by Theorem \ref{th_operator}, we get 
\begin{equation}
\label{go_go}
{\mathbb E}\|S_r(E)-{\mathbb E}S_r(E)\|_2^2\lesssim m_r\frac{\|\Sigma\|_{\infty}^4}{\bar g_r^4} \biggl(\biggl(\frac{{\bf r}(\Sigma)}{n}\biggr)^2\bigvee 
\biggl(\frac{{\bf r}(\Sigma)}{n}\biggr)^4
\biggr).  
\end{equation}
As a consequence of (\ref{ga_ga}) and (\ref{go_go}), it easily follows that 
\begin{align}
\label{ge_ge}
&
{\mathbb E}\Bigl|\Bigl\langle L_r(E),S_r(E)-{\mathbb E}S_r(E)\Bigr\rangle\Bigr|
\leq {\mathbb E}^{1/2}\|L_r(E)\|_2^2 {\mathbb E}^{1/2}\|S_r(E)-{\mathbb E}S_r(E)\|_2^2
\\
&
\nonumber
\lessim \sqrt{\frac{A_r(\Sigma)}{n}} \sqrt{m_r}\frac{\|\Sigma\|_{\infty}^2}{\bar g_r^2} \biggl(\frac{{\bf r}(\Sigma)}{n}\bigvee 
\biggl(\frac{{\bf r}(\Sigma)}{n}\biggr)^2\biggr)
\\
&
\nonumber
\lessim m_r\frac{\|\Sigma\|_{\infty}^3}{\bar g_r^3} \biggl(\biggl(\frac{{\bf r}(\Sigma)}{n}\biggr)^{3/2}\bigvee 
\biggl(\frac{{\bf r}(\Sigma)}{n}\biggr)^{5/2}\biggr)
\end{align}
(\ref{var_var_A}) and (\ref{var_var_B}) now follow from 
(\ref{gu_gu}), (\ref{ga_ga}), (\ref{go_go}) and (\ref{ge_ge}). 

Claim 3 is an easy consequence of the first two claims due to the ``bias-variance 
decomposition'' 
$
{\mathbb E}\|\hat P_r-P_r\|_2^2 = \|{\mathbb E}\hat P_r-P_r\|_2^2 +{\mathbb E}\|\hat P_r-{\mathbb E}\hat P_r\|_2^2 
$ 
(see also (\ref{A_n_bou''})).
\qed
\end{proof}

\section{Concentration Inequalities}

The main goal of this section is to derive a concentration bound for the squared Hilbert--Schmidt error $\|\hat P_r-P_r\|_2^2$
around its expectation. 
Denote 
\begin{equation}
\label{define_B_r}
B_r(\Sigma):=2\sqrt{2}\|P_r\Sigma P_r\|_2 \|C_r\Sigma C_r\|_2.
\end{equation}

\begin{theorem}
\label{HS-conc}
Suppose that, for some $\gamma \in (0,1),$ 
\begin{equation}
\label{cond_gamma}
{\mathbb E}\|\hat \Sigma-\Sigma\|_{\infty} \leq \frac{(1-\gamma)\bar g_r}{2}.
\end{equation}
Moreover, let $t\geq 1$ and suppose that   
\begin{equation}
\label{siml_assump}
m_r\lesssim 1,\ \ \frac{\|\Sigma\|_{\infty}}{\bar g_r}\sqrt{\frac{t}{n}}\lesssim 1.
\end{equation}
Then, for some constant $D_{\gamma}>0$ with probability at least $1-e^{-t},$
\begin{equation}   
\label{HS-CONC}
\Bigl| \|\hat P_r-P_r\|_2^2- {\mathbb E}\|\hat P_r-P_r\|_2^2\Bigr|\leq
D_{\gamma}\biggl[\frac{B_r(\Sigma)}{n}\sqrt{t} \bigvee \frac{\|\Sigma\|_{\infty}^2}{\bar g_r^2}\frac{t}{n}\bigvee 
\frac{\|\Sigma\|_{\infty}^3}{\bar g_r^3}\frac{{\bf r}(\Sigma)}{n}\sqrt{\frac{t}{n}}\biggr].
\end{equation} 
\end{theorem}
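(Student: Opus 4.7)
The plan is to exploit the perturbation decomposition $\hat P_r - P_r = L_r(E) + S_r(E)$ from Lemma 1 (with $E := \hat\Sigma - \Sigma$) and expand
$$\|\hat P_r-P_r\|_2^2 = \|L_r(E)\|_2^2 + 2\langle L_r(E), S_r(E)\rangle + \|S_r(E)\|_2^2.$$
The first summand carries the main Gaussian fluctuations at scale $B_r(\Sigma)/n$; the remaining two are lower-order remainders that I will control via $\|E\|_\infty$. After centering, I concentrate each of the three pieces separately and combine them via the triangle inequality.

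For the main term, I would use that $L_r(E) = C_r E P_r + P_r E C_r$ with $P_r C_r = C_r P_r = 0$, giving the Hilbert--Schmidt orthogonal decomposition $\|L_r(E)\|_2^2 = 2\|C_r E P_r\|_2^2$, and then rewrite
$$\|C_r E P_r\|_2^2 = n^{-2}\sum_{j,k=1}^n \langle P_r X_j, P_r X_k\rangle \langle C_r X_j, C_r X_k\rangle.$$
Since $P_r X$ and $C_r X$ are independent (uncorrelated jointly Gaussian), I would condition on $(P_r X_j)_j$, so the inner sum becomes a quadratic form in the independent Gaussian family $(C_r X_j)_j$, to which the Hanson--Wright inequality applies with Hilbert--Schmidt and operator norms of the coefficient matrix as the two scales. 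Computing those two norms and then applying a second concentration step for the residual randomness in $(P_r X_j)_j$ should match the target: the Gaussian scale $B_r(\Sigma)\sqrt{t}/n$ emerges from the variance identity $\mathrm{Var}(\|L_r(E)\|_2^2) \asymp B_r(\Sigma)^2/n^2$ (already used implicitly in the proof of Theorem 3, part 2), while the Bernstein scale $\frac{\|\Sigma\|_\infty^2}{\bar g_r^2}\frac{t}{n}$ comes from operator norm estimates using $\|P_r\Sigma P_r\|_\infty \leq \|\Sigma\|_\infty$ and $\|C_r\Sigma C_r\|_\infty \leq \|\Sigma\|_\infty/\bar g_r^2$.

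For the remainder terms, on the high-probability event provided by Theorem 3 one has $\|E\|_\infty \lesssim \|\Sigma\|_\infty(\sqrt{\mathbf{r}(\Sigma)/n}\vee\sqrt{t/n})$. Lemma 1 then yields $\|S_r(E)\|_\infty \lesssim \|E\|_\infty^2/\bar g_r^2$, and since $S_r(E)$ has rank $\lesssim m_r \lesssim 1$, the same bound (up to a constant) holds for $\|S_r(E)\|_2$. Together with $\|L_r(E)\|_2 \lesssim \|E\|_\infty/\bar g_r$, Cauchy--Schwarz on the cross term produces a contribution of order $\frac{\|\Sigma\|_\infty^3}{\bar g_r^3}\frac{\mathbf{r}(\Sigma)}{n}\sqrt{t/n}$, matching the third term of (\ref{HS-CONC}); the $\|S_r(E)\|_2^2$ piece is of still smaller order. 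To concentrate these remainders around their means I would combine the Lipschitz property of $E \mapsto S_r(E)$ from Lemma 2 (whose smallness hypothesis is enforced by assumption (\ref{cond_gamma}) and Theorem 3 on a high-probability event) with Gaussian Lipschitz concentration applied to the map $(X_1,\ldots,X_n)\mapsto \|E\|_\infty$.

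The hard part will be the quantitative concentration of $\|L_r(E)\|_2^2$, a polynomial of degree four in the underlying Gaussian sample, with the \emph{correct} Gaussian scale $B_r(\Sigma)$ rather than a crude upper bound in terms of the traces appearing in $A_r(\Sigma)$. The conditioning-plus-Hanson--Wright route is natural but demands careful identification of the Hilbert--Schmidt norm of the conditional coefficient matrix with the variance already computed in the proof of Theorem 3; alternatively, one could directly invoke an exponential inequality for canonical Gaussian chaos of order four (Arcones--Giné, Latała--Łochowski). Either way, distinguishing the two tail regimes (Gaussian at scale $B_r(\Sigma)/n$, Bernstein at scale $\|\Sigma\|_\infty^2/(\bar g_r^2 n)$) will require tracking constants across both chaos levels and reconciling them with the secondary remainder scale set by $\|E\|_\infty$ through Theorem 3.
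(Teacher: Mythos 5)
Your overall strategy coincides with the paper's: the quadratic term $\|L_r(E)\|_2^2$ is handled by conditioning on $(P_rX_j)_j$ and exploiting that the conditional law of $n^{-1/2}\sum_j P_rX_j\otimes C_rX_j$ is Gaussian with covariance determined by $\Gamma_r=n^{-1}\sum_j P_rX_j\otimes P_rX_j$ (the paper packages this as an exact distributional identity $n\|L_r(E)\|_2^2\stackrel{d}{=}2\sum_{k\in\Delta_r}\gamma_k\|C_rX^{(k)}\|^2$ and then applies a Bernstein-type deviation bound for weighted sums of $\chi^2$ variables, which is precisely your conditional Hanson--Wright step); the remainder is handled by truncating on the event $\{\|E\|_\infty\le\delta\}$ and applying Gaussian isoperimetric concentration to the truncated functional, whose Lipschitz constant is controlled via the Lipschitz property of $E\mapsto S_r(E)$. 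One caveat on your "second concentration step for the residual randomness in $(P_rX_j)_j$": in the paper this step is not negligible — the fluctuation of $\mathrm{tr}(\Gamma_r)$ contributes a term $\frac{A_r(\Sigma)}{n}\bigl(\sqrt{m_r/n}\vee\sqrt{t/n}\bigr)$, and it is this term (bounded via $A_r(\Sigma)\lesssim\frac{m_r\mu_r}{\bar g_r^2}\|\Sigma\|_\infty\mathbf{r}(\Sigma)$) that accounts for part of the third scale in the statement.

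There is, however, one concrete quantitative gap in your treatment of the cross term. On the event $\|E\|_\infty\lesssim\|\Sigma\|_\infty\bigl(\sqrt{\mathbf{r}(\Sigma)/n}\vee\sqrt{t/n}\bigr)$, Cauchy--Schwarz with $\|L_r(E)\|_2\lesssim\|E\|_\infty/\bar g_r$ and $\|S_r(E)\|_2\lesssim\|E\|_\infty^2/\bar g_r^2$ gives
$|\langle L_r(E),S_r(E)\rangle|\lesssim\frac{\|\Sigma\|_\infty^3}{\bar g_r^3}\bigl(\frac{\mathbf{r}(\Sigma)}{n}\vee\frac{t}{n}\bigr)^{3/2}$,
and for $t\ll\mathbf{r}(\Sigma)$ this is $\frac{\|\Sigma\|_\infty^3}{\bar g_r^3}\frac{\mathbf{r}(\Sigma)}{n}\sqrt{\mathbf{r}(\Sigma)/n}$, which is strictly larger than the claimed $\frac{\|\Sigma\|_\infty^3}{\bar g_r^3}\frac{\mathbf{r}(\Sigma)}{n}\sqrt{t/n}$. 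So the deterministic/Cauchy--Schwarz route does \emph{not} "match the third term"; it only bounds the size of the remainder, not its deviation from the mean. The factor $\sqrt{t/n}$ (with $t$, not $\mathbf{r}(\Sigma)$, under the square root) can only come from the Lipschitz-concentration argument you mention at the end: the Lipschitz constant of the truncated remainder functional is of order $m_r\frac{\delta^2}{\bar g_r^3}\frac{\|\Sigma\|_\infty^{1/2}+\sqrt\delta}{\sqrt n}$ with $\delta\asymp\|\Sigma\|_\infty\sqrt{\mathbf{r}(\Sigma)/n}$, and Gaussian concentration then yields deviations $\lesssim m_r\frac{\|\Sigma\|_\infty^3}{\bar g_r^3}\frac{\mathbf{r}(\Sigma)}{n}\sqrt{t/n}$ around the median, after which one must pass from median to mean by integrating the tails. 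Make that concentration step the primary argument for the remainder (with the truncation $\varphi(\|E\|_\infty/\delta)$ needed to make the functional globally Lipschitz), and relegate Cauchy--Schwarz to bounding the mean; as written, the proposal conflates the two and would deliver a weaker bound.
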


Note that the first term $\frac{B_r(\Sigma)}{n}\sqrt{t}$ in the right hand side of (\ref{HS-CONC}) is 
dominant if $B_r(\Sigma)\to \infty$ and $\frac{{\bf r}(\Sigma)}{B_r(\Sigma) \sqrt{n}}\to 0.$ In the 
next section, it will be shown that under the same assumptions the random variable  
$\frac{\|\hat P_r-P_r\|_2^2- {\mathbb E}\|\hat P_r-P_r\|_2^2}{{\rm Var}^{1/2}(\|\hat P_r-P_r\|_2^2)}$
is close in distribution to the standard normal and, in addition, 
${\rm Var}^{1/2}(\|\hat P_r-P_r\|_2^2)=(1+o(1))\frac{B_r(\Sigma)}{n}.$

The main ingredient in the proofs of these results is a concentration bounds for the random variables $\|\hat P_r-P_r\|_2^2 - \|L_r(E)\|_2^2$ given below. 

\begin{theorem}
\label{technical_2}

Suppose that, for some $\gamma \in (0,1),$ condition (\ref{cond_gamma}) holds. 

Then, there exists a constant $L_{\gamma}>0$ such that for all $t\geq 1$
the following bound holds with probability at least $1-e^{-t}:$ 
\begin{align}
\label{remaind+}
&
\Bigl|\|\hat P_r-P_r\|_2^2 - \|L_r(E)\|_2^2-{\mathbb E}(\|\hat P_r-P_r\|_2^2 - \|L_r(E)\|_2^2)\Bigr|
\\
&
\nonumber
\leq L_\gamma m_r\frac{\|\Sigma\|_{\infty}^3}{\bar g_r^3}
\biggl(\frac{{\bf r}(\Sigma)}{n}\bigvee \frac{t}{n}\bigvee \biggl(\frac{t}{n}\biggr)^2\biggr)
\sqrt{\frac{t}{n}}.
\end{align}
\end{theorem}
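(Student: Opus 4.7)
The starting point of the proof is the identity
\begin{equation*}
\|\hat P_r-P_r\|_2^2 - \|L_r(E)\|_2^2 \;=\; 2\langle L_r(E),S_r(E)\rangle + \|S_r(E)\|_2^2 \;=:\; \Psi(X_1,\dots,X_n),
\end{equation*}
obtained by expanding $\hat P_r-P_r = L_r(E)+S_r(E)$ (Lemma \ref{lem-pert-spectral}) where $E=\hat\Sigma-\Sigma$. The plan is to view $\Psi$ as a function of the i.i.d. Gaussian sample $(X_1,\ldots,X_n)\in\mathbb{H}^n$ and to concentrate it via Gaussian isoperimetry (Borell--TIS) applied to a Lipschitz surrogate $\tilde\Psi$ that agrees with $\Psi$ on a high-probability good event and is globally Lipschitz with a constant inherited from the local (good-event) perturbation estimates.

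For the \emph{localization} step I would set $\delta_t:=D_\gamma\|\Sigma\|_\infty\bigl[\sqrt{{\bf r}(\Sigma)/n}\vee\sqrt{t/n}\vee t/n\bigr]$ with $D_\gamma$ large enough that, combining hypothesis (\ref{cond_gamma}) with the deviation inequality (\ref{con_con}) of Theorem \ref{spectrum_sharper}, the event $\Omega_t:=\{\|E\|_\infty\leq(1+\gamma)\delta_t\}$ has probability at least $1-e^{-t}$ and $\delta_t\leq \tfrac{1-\gamma}{2(1+\gamma)}\bar g_r$ so that Lemma \ref{lipSr} applies on $\Omega_t$. On $\Omega_t$, Lemma \ref{lem-pert-spectral} gives $\|L_r(E)\|_\infty\lesssim\delta_t/\bar g_r$ and $\|S_r(E)\|_\infty\lesssim\delta_t^2/\bar g_r^2$ with rank $\lesssim m_r$, while Lemma \ref{lipSr} gives $\|DS_r(E)\|_{\mathrm{op}\to\mathrm{op}}\lesssim_\gamma\delta_t/\bar g_r^2$. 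Using trace-norm/operator-norm duality $|\langle A,B\rangle|\leq\|A\|_1\|B\|_\infty$ together with the rank bounds (which contribute the factor $m_r$), the Fréchet derivative of $\Psi$ in $E$ satisfies
\begin{equation*}
\|D_E\Psi\|_{(\mathrm{op}\to\mathbb{R})}\;\lesssim_\gamma\; m_r\,\delta_t^2/\bar g_r^3\quad\text{on }\Omega_t.
\end{equation*}
Composing with the map $(X_i)\mapsto E$, whose partial derivative in $X_i$ has operator norm $\leq 2\|X_i\|/n$, and combining with the standard Gaussian chi-square bound $\sum_i\|X_i\|^2\leq 2n\,\mathrm{tr}(\Sigma)$ (of probability $\geq 1-e^{-t}$), I obtain a Lipschitz constant $L_\Psi\lesssim_\gamma m_r(\delta_t^2/\bar g_r^3)\sqrt{\mathrm{tr}(\Sigma)/n}$ on the intersection of these two events.

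The third step is to build $\tilde\Psi$ by replacing $\hat P_r$ with a truncation of its Neumann-series expansion $\hat P_r=P_r+\sum_{k\geq 1}T_k(E)$ (valid for $\|E\|_\infty<\bar g_r$) and multiplying $\Psi$ by smooth Lipschitz cutoffs of $\|E\|_\infty$ and $\sum_i\|X_i\|^2$, so that $\tilde\Psi=\Psi$ on the intersection of the good events and $\tilde\Psi$ is globally $L_\Psi$-Lipschitz on $\mathbb{H}^n$. Applying Borell--TIS to the Gaussian vector $(X_1,\ldots,X_n)$ on $\mathbb{H}^n$ (whose covariance has operator norm $\|\Sigma\|_\infty$) yields
\begin{equation*}
\mathbb{P}\bigl(|\tilde\Psi-\mathbb{E}\tilde\Psi|\geq u\bigr)\leq 2\exp\bigl(-u^2/(2L_\Psi^2\|\Sigma\|_\infty)\bigr),
\end{equation*}
and taking $u\asymp L_\Psi\sqrt{\|\Sigma\|_\infty t}$, then substituting $L_\Psi$ and $\delta_t^2$, produces the bound in (\ref{remaind+}) with the three regimes $\tfrac{{\bf r}}{n},\tfrac{t}{n},(\tfrac{t}{n})^2$ coming from the three regimes of $\delta_t^2$. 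The transfer from $\tilde\Psi$ back to $\Psi$ is handled by $\mathbb{P}(\{\Psi\neq\tilde\Psi\})\leq 2e^{-t}$ and a Cauchy--Schwarz estimate $|\mathbb{E}\Psi-\mathbb{E}\tilde\Psi|\leq(\mathbb{E}\Psi^2)^{1/2}\mathbb{P}(\{\Psi\neq\tilde\Psi\})^{1/2}$, whose right side is negligible by the $L^p$ moment bound (\ref{E1/p}) applied to $|\Psi|\lesssim m_r\|E\|_\infty^3/\bar g_r^3$.

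The \emph{main obstacle} is the construction of $\tilde\Psi$: because $\hat P_r$ (and hence $S_r(E)$) is defined only implicitly through spectral clustering of $\hat\Sigma$, a naive cutoff of $\Psi$ does not enjoy the desired global Lipschitz property. One must work with a truncated Neumann-series representation of $\hat P_r$, for which Lemmas \ref{lem-pert-spectral} and \ref{lipSr} deliver the global bounds needed in each layer of the chain rule. In addition, to match the stated powers of $m_r$ and $\bar g_r$ it is essential to use the operator-norm Lipschitz estimate of Lemma \ref{lipSr} together with trace-norm/operator-norm duality throughout, rather than the easier HS inner product bounds, which would produce spurious extra factors. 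The constants $L_\gamma$ and $D_\gamma$ blow up as $\gamma\downarrow 0$, reflecting the degeneration at the boundary of applicability of Lemma \ref{lipSr}.
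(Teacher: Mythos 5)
Your overall strategy --- the decomposition $\|\hat P_r-P_r\|_2^2-\|L_r(E)\|_2^2=2\langle L_r(E),S_r(E)\rangle+\|S_r(E)\|_2^2$, a Lipschitz cutoff in $\|E\|_\infty$ at level $\delta_t$, Gaussian isoperimetry for the truncated functional, and a transfer back to the untruncated one --- is exactly the paper's route. But there is a quantitative gap in your Lipschitz constant that loses a factor $\sqrt{{\bf r}(\Sigma)}$ and therefore does not yield (\ref{remaind+}). You control the map $(X_1,\dots,X_n)\mapsto E$ by bounding its differential through $\frac{2}{n}\bigl(\sum_i\|X_i\|^2\bigr)^{1/2}\lesssim\sqrt{\mathrm{tr}(\Sigma)/n}$, arriving at $L_\Psi\lesssim_\gamma m_r(\delta_t^2/\bar g_r^3)\sqrt{\mathrm{tr}(\Sigma)/n}$. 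Feeding this into Borell--TIS with $u\asymp L_\Psi\sqrt{\|\Sigma\|_\infty t}$ gives a deviation of order $m_r\frac{\delta_t^2}{\bar g_r^3}\sqrt{\mathrm{tr}(\Sigma)\|\Sigma\|_\infty}\sqrt{t/n}=m_r\frac{\delta_t^2\|\Sigma\|_\infty}{\bar g_r^3}\sqrt{{\bf r}(\Sigma)}\sqrt{t/n}$, i.e.\ the right-hand side of (\ref{remaind+}) multiplied by $\sqrt{{\bf r}(\Sigma)}$. Since under (\ref{cond_gamma}) one only knows ${\bf r}(\Sigma)\lesssim n$, this extra factor can be as large as $\sqrt{n}$ and destroys the bound precisely in the high-complexity regime the theorem is for. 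The paper avoids this by using the sharper increment estimate (\ref{E__E'}) (Lemma 5 of \cite{Koltchinskii_Lounici_bilinear}): writing $\hat\Sigma-\hat\Sigma'$ as $n^{-1}\sum_j\bigl(X_j\otimes(X_j-X_j')+(X_j-X_j')\otimes X_j'\bigr)$ and applying the operator Cauchy--Schwarz inequality $\|\sum_j u_j\otimes v_j\|_\infty\le\|\sum_j u_j\otimes u_j\|_\infty^{1/2}\|\sum_j v_j\otimes v_j\|_\infty^{1/2}$, one controls the linear part by $\|\hat\Sigma\|_\infty^{1/2}\lesssim(\|\Sigma\|_\infty+\delta)^{1/2}$ rather than by $\bigl(n^{-1}\sum_j\|X_j\|^2\bigr)^{1/2}\approx\mathrm{tr}(\Sigma)^{1/2}$; the residual quadratic term $\frac{4}{n}\sum_j\|X_j-X_j'\|^2$ is then absorbed using the uniform bound $|\Psi|\lesssim_\gamma m_r\delta^3/\bar g_r^3$ and the inequality $a\wedge b\le\sqrt{ab}$. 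Your argument needs this replacement of $\mathrm{tr}(\Sigma)$ by $\|\Sigma\|_\infty$ to close.

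Two smaller points. First, you cannot arrange $\delta_t\le\frac{1-\gamma}{2(1+\gamma)}\bar g_r$ for \emph{all} $t\ge1$ by choosing $D_\gamma$ large (enlarging $D_\gamma$ enlarges $\delta_t$, and for $t\gg n$ the term $\|\Sigma\|_\infty t/n$ exceeds any multiple of $\bar g_r$); the paper treats the regime $\|\Sigma\|_\infty\sqrt{t/n}\gtrsim\bar g_r$ separately via the crude bound $|\Psi|\lesssim m_r\|E\|_\infty^2/\bar g_r^2$ combined with Theorem \ref{spectrum_sharper}, and you need an analogous case split. Second, the truncated Neumann-series surrogate for $\hat P_r$ is unnecessary: the cutoff $\varphi(\|E\|_\infty/\delta)$ alone, combined with the Lipschitz property of $E\mapsto S_r(E)$ on $\{\|E\|_\infty\le(1+\gamma)\delta\}$ from Lemma \ref{lipSr} and a short argument for pairs $(E,E')$ with only one of the two norms below the threshold, already produces a globally Lipschitz function that coincides with $\Psi$ on the good event. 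Your mean-to-mean transfer via Cauchy--Schwarz (in place of the paper's median-then-integrate-the-tails step) is a legitimate variant.
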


\begin{proof} 
It easily follows from Theorem \ref{th_operator} that under assumption (\ref{cond_gamma})
$$
\|\Sigma\|_{\infty}\biggl(\sqrt{\frac{{\bf r}(\Sigma)}{n}}\bigvee\frac{{\bf r}(\Sigma)}{n}\biggr)
\lesssim \frac{(1-\gamma)\bar g_r}{2}\leq \|\Sigma\|_{\infty},
$$
which implies that ${\bf r}(\Sigma)\lesssim n.$ Theorem \ref{spectrum_sharper} implies 
that for some constant $C'>0$ and for all $t\geq 1$ with probability at least $1-e^{-t}$
$$
\|\hat \Sigma-\Sigma\|_{\infty}\leq {\mathbb E}\|\hat \Sigma-\Sigma\|_{\infty}+ C'\|\Sigma\|_{\infty}\biggl(\sqrt{\frac{t}{n}}\bigvee \frac{t}{n}\biggr).
$$
We will first assume that 
\begin{equation}
\label{case_odin}
C\|\Sigma\|_{\infty}\sqrt{\frac{t}{n}}\leq \frac{\gamma \bar g_r}{4}
\end{equation}
with a sufficiently large constant $C\geq 1$ (the proof of the concentration bound in the opposite case will be much easier). 
This assumption easily implies that $t\leq n$ and, if $C\geq C',$ 
$$
C'\|\Sigma\|_{\infty}\biggl(\sqrt{\frac{t}{n}}\bigvee \frac{t}{n}\biggr)\leq C\|\Sigma\|_{\infty}\sqrt{\frac{t}{n}}.
$$
Denote 
$$
\delta_n(t):={\mathbb E}\|\hat \Sigma-\Sigma\|_{\infty}+C\|\Sigma\|_{\infty}\sqrt{\frac{t}{n}}.
$$
Then ${\mathbb P}\{\|\hat \Sigma-\Sigma\|_{\infty}\geq \delta_n(t)\}\leq e^{-t}.$

As before, denote $E=\hat \Sigma-\Sigma.$
The main part of the proof is the derivation of a concentration inequality for the function
$$
g(X_1,\dots,X_n)= 
\biggl(\|\hat P_r-P_r\|_2^2 - \|L_r(E)\|_2^2\biggr)\varphi \biggl(\frac{\|E\|_{\infty}}{\delta}\biggr),
$$
where, for some $\gamma \in (0,1),$ $\varphi$ is a Lipschitz function on ${\mathbb R}_{+}$ with constant $\frac{1}{\gamma},$ $0\leq \varphi (s)\leq 1,$
$\varphi (s)=1, s\leq 1,$ $\varphi(s)=0, s>1+\gamma,$
and $\delta>0$ is such that $\|E\|_{\infty}\leq \delta$
with a high probability. This inequality will be then used with $\delta=\delta_n(t).$ Together with 
Theorem \ref{spectrum_sharper}, it will imply bound (\ref{remaind+}) under the assumption (\ref{case_odin}).

Our main tool is 
the following concentration inequality that easily follows 
from Gaussian isoperimetric inequality.

\begin{lemma}
\label{Gaussian_concentration}
Let $X_1,\dots, X_n$ be i.i.d. centered Gaussian random variables 
in ${\mathbb H}$ with covariance operator $\Sigma.$
Let $f:{\mathbb H}^n\mapsto {\mathbb R}$ be a function satisfying 
the following Lipschitz condition with some $L>0:$
$$
\Bigl|f(x_1,\dots, x_n)-f(x_1',\dots, x_n')\Bigr|\leq 
L\biggl(\sum_{j=1}^n \|x_j-x_j'\|^2\biggr)^{1/2},\ x_1,\dots, x_n,x_1',\dots,x_n'\in {\mathbb H}.
$$ 
Suppose that, for a real number $M,$
$$
{\mathbb P}\{f(X_1,\dots,X_n)\geq M\}\geq 1/4\ {\rm and}\ {\mathbb P}\{f(X_1,\dots,X_n)\leq M\}\geq 1/4.
$$
Then, there exists a numerical constant $D>0$ such that for all $t\geq 1,$ 
$$
{\mathbb P}\Bigl\{|f(X_1,\dots, X_n)-M|\geq DL\|\Sigma\|_{\infty}^{1/2}\sqrt{t}\Bigr\}\leq 
e^{-t}.
$$
\end{lemma}

We have to check now that the function $g(X_1,\dots,X_n)$ satisfies the Lipschitz condition
(with a minor abuse of notation we view $X_1,\dots, X_n$ here as non-random vectors in ${\mathbb H}$
rather than random variables). 

\begin{lemma}
\label{Lipschitz_constant_2}
Suppose that, for some $\gamma\in (0,1/2),$ 
\begin{equation}
\label{con_delta''}
\delta \leq \frac{1-2\gamma}{1+2\gamma}\frac{\bar g_r}{2}.
\end{equation}
Then, there exists a numerical constant $D_{\gamma}>0$ such that, for all $X_1,\dots, X_n,
X_1',\dots, X_n'\in {\mathbb H},$
\begin{equation}
\label{lip_lip_CC'}
|g(X_1,\dots, X_n)-g(X_1',\dots, X_n')| 
\leq 
D_{\gamma} m_r\frac{\delta^2}{\bar g_r^3}
\frac{\|\Sigma\|_{\infty}^{1/2}+\sqrt{\delta}}{\sqrt{n}}
\biggl(\sum_{j=1}^n \|X_j-X_j'\|^2\biggr)^{1/2}.
\end{equation}
\end{lemma}

\begin{proof} Observe that  
\begin{align*}
&
\|\hat P_r-P_r\|_2^2-\|L_r(E)\|_2^2 = \|L_r(E)+S_r(E)\|_2^2 -
\|L_r(E)\|_2^2= 
\\
&
2\Bigl\langle L_r(E),S_r(E)\Bigr\rangle + \|S_r(E)\|_2^2=:\tilde g(E).
\end{align*}
Also, note that $L_r(E)$ is an operator 
of rank at most $2m_r$ and $S_r(E)=\hat P_r-P_r-L_r(E)$ has rank at most 
$4m_r$ (under the assumption that $\|E\|_{\infty}<\bar g_r/2$ implying that 
$\hat P_r$ is of rank $m_r$). This allows us to bound the Hilbert--Schmidt 
norms of such operators in terms of their operator norms: $\|A\|_2^2\leq {\rm rank}(A)\|A\|_{\infty}^2.$ Thus, we get
$$
\left| g(X_1,\ldots,X_n) \right| \leq 4\sqrt{2} m_ r\left(\|L_r(E)\|_{\infty}\|S_r(E)\|_{\infty} + \|S_r(E)\|_\infty^2\right)\varphi\left( \frac{\|E\|_{\infty}}{\delta}\right).
$$
Since $\varphi \biggl(\frac{\|E\|_{\infty}}{\delta}\biggr)=0$
if $\|E\|_{\infty}\geq (1+\gamma)\delta,$ claims 
(\ref{linear_perturb}), (\ref{remainder_A}) of Lemma \ref{lem-pert-spectral} imply that, under assumption (\ref{con_delta''})
\begin{align}\label{bdf}
|g(X_1,\dots, X_n)|\leq  c_\gamma  m_r  \biggl(\frac{\delta}{\bar g_r}\biggr)^3,
\end{align}
for some constant $c_\gamma>0$ depending only $\gamma$.

We will denote $\hat \Sigma':=n^{-1}\sum_{j=1}^n X_j'\otimes X_j'$ and $E':=\hat \Sigma'-\Sigma.$
Using now (\ref{linear_perturb}), (\ref{remainder_A}), (\ref{bdf}) and the fact that $\varphi$ is bounded by $1$ and Lipschitz with 
constant $\frac{1}{\gamma},$ which implies that the function $t\mapsto \varphi\biggl(\frac{t}{\delta}\biggr)$ is Lipschitz with constant $\frac{1}{\gamma \delta},$ we easily get that, under the assumptions
\begin{equation}
\label{assumEE}
\|E\|_{\infty}\leq (1+\gamma)\delta,\ \|E'\|_{\infty}\leq (1+\gamma)\delta, 
\end{equation}
the following inequality holds:
\begin{align}\label{bdEE'}
&
\biggl|\tilde g(E) \varphi \biggl(\frac{\|E\|_{\infty}}{\delta}\biggr)-
\tilde g(E') \varphi \biggl(\frac{\|E'\|_{\infty}}{\delta}\biggr)\biggr|
\\
&
\nonumber
\leq |\tilde g(E)-\tilde g(E')|+\frac{c_{\gamma}}{\gamma}
\frac{\delta^2}{\bar g_r^3}\|E-E'\|_{\infty}
\\
&
\nonumber
\leq 2 |\bigl \langle L_r(E-E') , S_r(E)   \bigr \rangle| + 2 |\bigl \langle L_r(E'), S_r(E )- S_r(E') \bigr\rangle | \\
&
\nonumber
\hspace{3cm}+ |\bigl\langle S_r(E)- S_r(E'),S_r(E)+S_r(E') \bigr \rangle|+  \frac{c_{\gamma}}{\gamma}
\frac{\delta^2}{\bar g_r^3}\|E-E'\|_{\infty}.
\end{align}

Using the Lipschitz bound of Lemma \ref{lipSr} and (\ref{linear_perturb}), (\ref{remainder_A}) of Lemma \ref{lem-pert-spectral},

we easily get that
\begin{align}\label{bdassEE}
\biggl| g(X_1,\ldots,X_n) -
 g(X_1',\ldots,X_n') \biggr|
 \leq c_\gamma' m_r\frac{\delta^2}{\bar g_r^3} \|E - E'\|_{\infty},
\end{align}
where $c_{\gamma}'>0$ depends only on $\gamma$.

A similar bound holds in the case when 
$$
\|E\|_{\infty}\leq (1+\gamma)\delta,\ \|E'\|_{\infty}> (1+\gamma)\delta
$$ 
(when both norms are larger than $(1+\gamma)\delta,$ the function $\varphi$ is equal to zero and the bound is trivial). 
Indeed, first consider the case when $\|E-E'\|_{\infty}\geq \gamma \delta.$ Then, in view of (\ref{bdf}), we have 
\begin{align*}
&
\biggl|\tilde g(E) \varphi \biggl(\frac{\|E\|_{\infty}}{\delta}\biggr)-
\tilde g(E') \varphi \biggl(\frac{\|E'\|_{\infty}}{\delta}\biggr)\biggr|
\\
&
=\biggl|\tilde g(E)  \varphi\biggl(\frac{\|E\|_{\infty}}{\delta}\biggr)\biggr|\leq 
c_\gamma m_r\frac{\delta^3}{\bar g_r^3} \leq 
\frac{c_\gamma}{\gamma} m_r\frac{\delta^2}{\bar g_r^3}\|E-E'\|_{\infty}.
\end{align*}
On the other hand, if $\|E-E'\|_{\infty}<\gamma \delta,$ we have that $\|E'\|_{\infty}\leq  (1+2\gamma)\delta$ and, taking into account assumption (\ref{con_delta''}), we can repeat the argument in the case (\ref{assumEE}) ending up with the same bound as (\ref{bdassEE}) with a positive constant (possibly different from $c_{\gamma}',$ but still depending only on $\gamma$) in the right hand side.

The following bound (see Lemma 5 in \cite{Koltchinskii_Lounici_bilinear}) provides a control of $\|E - E'\|_{\infty}:$
\begin{equation}
\label{E__E'}
\|E-E'\|_{\infty}\leq 
\frac{4\|\Sigma\|_{\infty}^{1/2}+4\sqrt{2\delta}}{\sqrt{n}}
\biggl(\sum_{j=1}^n \|X_j-X_j'\|^2\biggr)^{1/2}
\bigvee
\frac{4}{n}
\sum_{j=1}^n \|X_j-X_j'\|^2.
\end{equation}
Now substitute the last bound in the right hand side of (\ref{bdassEE}) and  
observe that, in view of (\ref{bdf}), the left hand side of (\ref{bdassEE}) can be 
also upper bounded by 
$
2c_\gamma m_r\frac{\delta^3}{\bar g_r^3}.
$
Therefore, we get that with some constant $L_{\gamma}>0,$
\begin{align}
\label{bdEE''}
&
\biggl| g(X_1,\ldots,X_n) -
 g(X_1',\ldots,X_n') \biggr|
\\
&
\nonumber
\leq 4c_{\gamma}'m_r\frac{\delta^2}{\bar g_r^3}
\biggl[
\frac{\|\Sigma\|_{\infty}^{1/2}+\sqrt{2\delta}}{\sqrt{n}}
\biggl(\sum_{j=1}^n \|X_j-X_j'\|^2\biggr)^{1/2}
\bigvee
\frac{1}{n}
\sum_{j=1}^n \|X_j-X_j'\|^2
\biggr]
\bigwedge 
2c_\gamma m_r\frac{\delta^3}{\bar g_r^3}
\\
&
\nonumber
\leq L_{\gamma}m_r\frac{\delta^2}{\bar g_r^3}\biggl[
\frac{\|\Sigma\|_{\infty}^{1/2}+\sqrt{2\delta}}{\sqrt{n}}
\biggl(\sum_{j=1}^n \|X_j-X_j'\|^2\biggr)^{1/2}
\bigvee
\biggl(\frac{1}{n}
\sum_{j=1}^n \|X_j-X_j'\|^2\bigwedge \delta 
\biggr)\biggr].
\end{align}
Using an elementary inequality $a\wedge b\leq \sqrt{ab}, a,b\geq 0,$
we get 
$$
\frac{1}{n}
\sum_{j=1}^n \|X_j-X_j'\|^2\bigwedge \delta \leq 
\sqrt{\frac{\delta}{n}}\biggl(\sum_{j=1}^n \|X_j-X_j'\|^2\biggr)^{1/2}.
$$
This allows us to drop the last term in the maximum in the right hand side of 
(\ref{bdEE''}) (since a similar expression is a part of the first term). 
This yields bound (\ref{lip_lip_CC'}).

\qed

\end{proof}

Getting back to the proof of Theorem \ref{technical_2}, it will be convenient to prove first a version of its concentration bound with a median instead 
of the mean. Denote by ${\rm Med}(\eta)$ a median of a random 
variable $\eta$ and define $M:={\rm Med}\Bigl(\|\hat P_r-P_r\|_2^2 - \|L_r(E)\|_2^2\Bigr).$ Let $\delta:=\delta_n(t)$
and suppose that $t\geq \log(4)$ (by adjusting the constants, one can replace this condition by $t\geq 1$ as it is done 
in the statement of the theorem). Under conditions (\ref{cond_gamma}) and (\ref{case_odin}), 
$\delta_n(t)\leq \Bigl(1-\frac{\gamma}{2}\Bigr)\frac{\bar g_r}{2}=\frac{1-2\gamma'}{1+2\gamma'}\frac{\bar g_r}{2}$
for some $\gamma'\in (0,1/2).$ Thus, the function $g(X_1,\dots, X_n)$ satisfies the Lipschitz condition (\ref{lip_lip_CC'})
with some constant $D_{\gamma}'=D_{\gamma'}.$ Also, we have ${\mathbb P}\{\|E\|_{\infty}\geq \delta\}\leq e^{-t}\leq 1/4.$  Note that 
on the event $\{\|E\|_{\infty}<\delta\},$ $g(X_1,\dots, X_n)=\|\hat P_r-P_r\|_2^2 - \|L_r(E)\|_2^2.$ Therefore,
\begin{align*}
&
{\mathbb P}\{g(X_1,\dots, X_n)\geq M\}\geq 
{\mathbb P}\{g(X_1,\dots, X_n)\geq M, \|E\|_{\infty}<\delta\}\geq 
\\
&
{\mathbb P}\Bigl\{\|\hat P_r-P_r\|_2^2 - \|L_r(E)\|_2^2\geq M\Bigr\}-{\mathbb P}\{\|E\|_{\infty}\geq \delta\}\geq 1/4. 
\end{align*}
Quite similarly,
$
{\mathbb P}\{g(X_1,\dots, X_n)\leq M\}\geq 1/4.
$
It follows from Lemma \ref{Gaussian_concentration} 
that with probability at least $1-e^{-t}$
$$
\Bigl|g(X_1,\dots, X_n)-M\Bigr| \leq 
L_{\gamma}' m_r\frac{\delta_n(t)^2}{\bar g_r^3}
\|\Sigma\|_{\infty}^{1/2}\Bigl(\|\Sigma\|_{\infty}^{1/2}+\sqrt{\delta_n(t)}\Bigr)\sqrt{\frac{t}{n}}
$$
with some constant $L_{\gamma}'>0.$
Using the bound 
$$
\delta_n(t)\lessim \|\Sigma\|_{\infty}\biggl(\sqrt{\frac{{\bf r}(\Sigma)}{n}}\bigvee \sqrt{\frac{t}{n}}\biggr) 
$$
that easily follows from the definition of $\delta_n(t)$ and the bound of Theorem \ref{th_operator},
we get that with some $L_{\gamma}>0$ and with the same probability
$$
\Bigl|g(X_1,\dots, X_n)-M\Bigr| \leq 
L_\gamma m_r\frac{\|\Sigma\|_{\infty}^3}{\bar g_r^3}
\biggl(\frac{{\bf r}(\Sigma)}{n}\bigvee \frac{t}{n}\biggr)
\sqrt{\frac{t}{n}}.
$$
Since ${\mathbb P}\{\|E\|_{\infty}\geq \delta\}\leq e^{-t}$ and $g(X_1,\dots, X_n)=\|\hat P_r-P_r\|_2^2 - \|L_r(E)\|_2^2$
when $\|E\|_{\infty}<\delta,$ we can conclude that with probability at least $1-2e^{-t}$
\begin{align*}
&
\Bigl|\|\hat P_r-P_r\|_2^2 - \|L_r(E)\|_2^2-M\Bigr| \leq 
L_\gamma m_r\frac{\|\Sigma\|_{\infty}^3}{\bar g_r^3}
\biggl(\frac{{\bf r}(\Sigma)}{n}\bigvee \frac{t}{n}\biggr)
\sqrt{\frac{t}{n}}
\\
&
\leq L_\gamma m_r\frac{\|\Sigma\|_{\infty}^3}{\bar g_r^3}
\biggl(\frac{{\bf r}(\Sigma)}{n}\bigvee \frac{t}{n}\bigvee
\biggl(\frac{t}{n}\biggr)^2\biggr)
\sqrt{\frac{t}{n}}
\end{align*}
Adjusting the value of the constant $L_{\gamma}$ one can replace the probability bound $1-2e^{-t}$ by 
$1-e^{-t}.$

We will now prove a similar bound in the case when condition (\ref{case_odin}) does not hold. Then,  
\begin{equation}
\label{case_dva}
\frac{\|\Sigma\|_{\infty}}{\bar g_r}\sqrt{\frac{t}{n}}\geq \frac{\gamma }{4C}.
\end{equation}
It follows from bound (\ref{bd_1}) and the definition of $L_r(E)$ that, for some constant $c>0,$ 
$$
\Bigl|\|\hat P_r-P_r\|_2^2 - \|L_r(E)\|_2^2\Bigr|\leq 
c m_r\frac{\|E\|_{\infty}^2}{\bar g_r^2}.
$$
We can now use the bounds of theorems \ref{th_operator} and \ref{spectrum_sharper} to show that under 
condition (\ref{cond_gamma}) for some $C>0$ with probability at least $1-e^{-t}$
$$
\Bigl|\|\hat P_r-P_r\|_2^2 - \|L_r(E)\|_2^2\Bigr|\leq 
C m_r \frac{\|\Sigma\|_{\infty}^2}{\bar g_r^2} \biggl(\frac{{\bf r}(\Sigma)}{n}\bigvee \frac{t}{n}\bigvee \biggl(\frac{t}{n}\biggr)^2\biggr).
$$
In view of condition (\ref{case_dva}), we get from the last bound that with some $L_{\gamma}'>0$
with probability at least $1-e^{-t}$
$$
\Bigl|\|\hat P_r-P_r\|_2^2 - \|L_r(E)\|_2^2\Bigr| \leq 
L_\gamma' m_r\frac{\|\Sigma\|_{\infty}^3}{\bar g_r^3}
\biggl(\frac{{\bf r}(\Sigma)}{n}\bigvee \frac{t}{n}\bigvee
\biggl(\frac{t}{n}\biggr)^2\biggr)
\sqrt{\frac{t}{n}}.
$$
This easily implies the following bound on the median $M:$ 
$$
M\leq 
L_\gamma' m_r\frac{\|\Sigma\|_{\infty}^3}{\bar g_r^3}
\biggl(\frac{{\bf r}(\Sigma)}{n}\bigvee \frac{\log 2}{n}\bigvee
\biggl(\frac{\log 2}{n}\biggr)^2\biggr)
\sqrt{\frac{\log 2}{n}}.
$$
Therefore, for some $L_{\gamma}>0$ and for all $t\geq 1,$
with probability at least $1-e^{-t}$
\begin{equation}
\label{exp_med}
\Bigl|\|\hat P_r-P_r\|_2^2 - \|L_r(E)\|_2^2-M\Bigr| \leq 
L_\gamma m_r\frac{\|\Sigma\|_{\infty}^3}{\bar g_r^3}
\biggl(\frac{{\bf r}(\Sigma)}{n}\bigvee \frac{t}{n}\bigvee
\biggl(\frac{t}{n}\biggr)^2\biggr)
\sqrt{\frac{t}{n}},
\end{equation}
and the last bound was proved in both cases (\ref{case_odin}) and (\ref{case_dva}). 

It remains to integrate out the tails of exponential bound (\ref{exp_med}) to get the inequality 
\begin{align*}
&
\Bigl|{\mathbb E}(\|\hat P_r-P_r\|_2^2 - \|L_r(E)\|_2^2)-M\Bigr| \leq 
{\mathbb E}\Bigl|\|\hat P_r-P_r\|_2^2 - \|L_r(E)\|_2^2-M\Bigr| \leq
\\
&
\bar L_\gamma m_r\frac{\|\Sigma\|_{\infty}^3}{\bar g_r^3}
\biggl(\frac{{\bf r}(\Sigma)}{n}\bigvee \frac{1}{n}\biggr)
\sqrt{\frac{1}{n}}
\end{align*}
with some $\bar L_{\gamma}>0,$ which, along with (\ref{exp_med}), implies concentration inequality (\ref{remaind+}).

\qed

\end{proof}

We now turn to the proof of Theorem \ref{HS-conc}.

\begin{proof}
In view of Theorem \ref{technical_2}, it is sufficient to obtain a concentration bound for 
$
\|L_r(E)\|_2^2 - \mathbb E \|L_r(E)\|_2^2.
$
This could be done by rewriting $\|L_r(E)\|_2^2$ in terms of $U$-statistics and using the 
corresponding exponential bounds. However, we will follow a different (more elementary) path that directly 
utilizes the Gaussiness of random variables $\{X_j\}.$ The key ingredient is the following 
simple representation lemma. In what follows, $\xi\stackrel{d}{=}\eta$ means that random variables 
$\xi$ and $\eta$ have the same distribution. 

\begin{lemma}
\label{repr_d}
The following representation holds:
\begin{align}
\label{V-con-interm-1}
n\|L_r(E)\|_2^2 \stackrel{d}{=} 2\sum_{k\in \Delta_r} \gamma_k \|C_r X^{(k)}\|^2,
\end{align}
where $\gamma_k$ are the eigenvalues of the random matrix $\Gamma_r := \frac{1}{n} \sum_{i=1}^n P_r X_i \otimes P_r X_i$ and $X^{(k)}$, $k\in \Delta_r$ are i.i.d. copies of $X$ independent of $\Gamma_r.$
\end{lemma}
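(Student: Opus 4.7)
The plan is to reduce $\|L_r(E)\|_2^2$ to $2\|C_r E P_r\|_2^2$, expand the Hilbert--Schmidt norm in terms of the Gram matrix of $(P_r X_j)$, diagonalize it, and then exploit the independence of $P_r X$ and $C_r X$ conditionally on the $P_r X_j$'s.

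First I would observe that $C_r \Sigma P_r = 0$ (from the spectral decomposition $\Sigma = \sum_s \mu_s P_s$ and $P_s P_r = 0$ for $s \neq r$), so $C_r E P_r = n^{-1}\sum_j C_r X_j \otimes P_r X_j$ and similarly $P_r E C_r = n^{-1}\sum_j P_r X_j \otimes C_r X_j$. These two summands of $L_r(E)$ are mutually adjoint and orthogonal in the Hilbert--Schmidt inner product, since $\langle C_r E P_r, P_r E C_r\rangle = {\rm tr}(P_r E C_r P_r E C_r) = 0$ because $C_r P_r = 0$. Hence $\|L_r(E)\|_2^2 = 2\|C_r E P_r\|_2^2$.

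Next, using the identity $\langle u\otimes v, u'\otimes v'\rangle = \langle u,u'\rangle\langle v,v'\rangle$, I would write
$$
\|C_r E P_r\|_2^2 = n^{-2}\sum_{i,j=1}^n \Bigl\langle C_r X_i, C_r X_j\Bigr\rangle \Bigl\langle P_r X_i, P_r X_j\Bigr\rangle.
$$
The Gram matrix $G/n$ with entries $n^{-1}\langle P_r X_i, P_r X_j\rangle$ shares its nonzero eigenvalues with $\Gamma_r$ (both coincide with the squared singular values of the operator $e_j \mapsto P_r X_j/\sqrt n$); these are exactly $\{\gamma_k\}_{k\in\Delta_r}$, since $\mathrm{rank}(\Gamma_r)\leq m_r$. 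Letting $(u_k)_{k\in\Delta_r}$ denote orthonormal eigenvectors of $G/n$, the decomposition $(G/n)_{ij}=\sum_k \gamma_k u_k(i)u_k(j)$ yields
$$
n\|C_r E P_r\|_2^2 = \sum_{k\in\Delta_r} \gamma_k\, \|W_k\|^2, \qquad W_k := \sum_{i=1}^n u_k(i)\, C_r X_i.
$$

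Finally, since $P_r X$ and $C_r X$ are jointly Gaussian and uncorrelated (as $P_r\Sigma C_r = 0$), they are independent, so $(C_r X_j)_{j=1}^n$ is independent of $(P_r X_j)_{j=1}^n$. Conditionally on the latter, both $\Gamma_r$ (hence $\gamma_k$) and the eigenvectors $u_k$ are deterministic, and $W_k$ is a linear combination of i.i.d.\ centered Gaussians with $\sum_i u_k(i)^2 = 1$ and $\sum_i u_k(i)u_l(i)=0$ for $k\neq l$. Thus, conditionally, $(W_k)_{k\in\Delta_r}$ are i.i.d.\ with the same Gaussian distribution as $C_r X$, which means $\|W_k\|^2 \stackrel{d}{=} \|C_r X^{(k)}\|^2$ for i.i.d.\ copies $X^{(k)}$ of $X$ independent of $\Gamma_r$. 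Multiplying through by $2$ gives the claimed representation. The only delicate point is that the eigenvectors $u_k$ are not canonically defined when $\Gamma_r$ has repeated eigenvalues, but this is harmless here because the conditional distribution of $(\|W_k\|^2)$ depends on $(u_k)$ only through the orthonormality relations used above.
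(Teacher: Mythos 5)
Your proof is correct. It follows the same overall strategy as the paper's: reduce to $2\|C_rEP_r\|_2^2$ via the Hilbert--Schmidt orthogonality of the two summands of $L_r(E)$, condition on $(P_rX_j)_{j\le n}$, diagonalize $\Gamma_r$, and use the Gaussianity and independence of $P_rX$ and $C_rX$. The one place where you diverge is the verification of the distributional identity itself: the paper computes the conditional covariance of the random operator $U=n^{-1/2}\sum_j P_rX_j\otimes C_rX_j$ as a $4$-tensor $T(\Gamma_r\otimes(C_r\Sigma C_r))$ and matches it with that of $V=\sum_k\sqrt{\gamma_k}\,\phi_k\otimes C_rX^{(k)}$, whereas you pass to the Gram matrix $n^{-1}\langle P_rX_i,P_rX_j\rangle$ (which shares its nonzero spectrum with $\Gamma_r$) and apply an orthogonal change of variables in the sample index, so that the vectors $W_k=\sum_i u_k(i)C_rX_i$ are conditionally i.i.d.\ copies of $C_rX$ by rotation invariance. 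Your route is slightly more elementary in that it avoids the tensor-covariance bookkeeping, at the cost of the (correctly flagged, harmless) measurable-selection issue for the eigenvectors $u_k$ and the degenerate case where some $\gamma_k=0$, where the corresponding terms vanish on both sides so nothing is lost.
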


\begin{proof}
Note that 
$
n\|L_r(E)\|_2^2 = 
n\|P_r E C_r+ C_r E P_r\|_2^2.
$
Since the operators $P_r E C_r$ and $C_r E P_r$ are orthogonal with respect to 
the Hilbert--Schmidt inner product and 
$$
\|P_r E C_r\|_2^2 = {\rm tr}(P_r E C_r C_r E P_r)= {\rm tr}(C_r E P_r P_r E C_r)=
\|C_r E P_r\|_2^2,
$$
we have 
$$
\|P_r E C_r+ C_r E P_r\|_2^2= \|P_r E C_r\|_2^2+\|C_r E P_r\|_2^2= 
2\|P_r E C_r\|_2^2.
$$
Also, note that $P_rEC_r=\frac{1}{n}\sum_{j=1}^n P_r X_j\otimes C_r X_j.$
Therefore,
\begin{equation}
\label{repr_LL''}
n\|L_r(E)\|_2^2 = 2n\|P_r E C_r\|_2^2=
2 \biggl\|\frac{1}{\sqrt{n}}\sum_{j=1}^n P_r X_j\otimes C_r X_j\biggr\|_2^2.
\end{equation}

Define the following mapping 
$$
T(u_1\otimes u_2\otimes u_3\otimes u_4)=(u_1\otimes u_3\otimes u_2\otimes u_4), u_1,u_2,u_3,u_4\in {\mathbb H}.
$$
It can be extended in a unique way by linearity and continuity to a bounded linear operator 
$
T:
{\mathbb H}\otimes {\mathbb H}\otimes {\mathbb H}\otimes {\mathbb H}\mapsto {\mathbb H}\otimes {\mathbb H}\otimes {\mathbb H}\otimes {\mathbb H}.
$

Recall that $P_r X_j, j=1,\dots, n$ and $C_r X_j, j=1,\dots, n$ are centered Gaussian random variables and they are uncorrelated
(see the proof of Theorem \ref{risk_bd}). Therefore, they are also independent. 
Conditionally on $P_r X_j, j=1,\dots, n,$ the distribution 
of random operator 
$
U:=\frac{1}{\sqrt{n}}\sum_{j=1}^n P_r X_j\otimes C_r X_j
$
is centered Gaussian with covariance 
\begin{align*}
&
{\mathbb E}(U\otimes U|P_r X_j, j=1,\dots, n)=n^{-1}\sum_{j=1}^n 
{\mathbb E}\Bigl(P_r X_j \otimes C_r X_j\otimes P_r X_j\otimes C_r X_j\big\vert P_r X_j, j=1,\dots, n\Bigr)
\\
&
=T(\Gamma_r\otimes {\mathbb E}(C_r X\otimes C_r X))=
T(\Gamma_r \otimes (C_r\Sigma C_r)).
\end{align*}
Note that $\Gamma_r$ can be viewed as a symmetric operator 
acting in the eigenspace of eigenvalue $\mu_r,$ and it is 
nonnegatively definite. Thus, it has spectral representation 
$
\Gamma_r = \sum_{k\in \Delta_r}\gamma_k \phi_k\otimes \phi_k,
$ 
where $\gamma_k\geq 0$ are its eigenvalues and $\phi_k$ are its orthonormal eigenvectors (that belong to the eigenspace 
of $\mu_r$). It follows that  
\begin{align*}
&
{\mathbb E}(U\otimes U|P_r X_j, j=1,\dots, n)
=T\left(\sum_{k\in \Delta_r}\gamma_k \Bigl(\phi_k\otimes \phi_k\otimes {\mathbb E}(C_r X\otimes C_r X)\Bigr)\right).
\end{align*}
Let $X^{(k)}, k\in \Delta_r$ be independent copies of $X$
(also independent of $X_1,\dots, X_n$). Denote 
$
V:= \sum_{k\in \Delta_r} \sqrt{\gamma_k} 
\phi_k \otimes C_rX^{(k)}.
$ 
It is now easy to check that 
\begin{align*}
&
{\mathbb E}(V\otimes V|P_r X_j, j=1,\dots, n)
=T\left( \sum_{k\in \Delta_r}\gamma_k \Bigl(\phi_k\otimes \phi_k\otimes {\mathbb E}(C_r X\otimes C_r X)\Bigr)\right),
\end{align*}
implying that conditional distributions of $U$ and $V$ given 
$P_r X_j, j=1,\dots, n$ are the same. As a consequence, 
the distribution of $n\|L_r(E)\|_2^2=2\|U\|_2^2$ coincides 
with the distribution of random variable 
\begin{align}
\label{repr_LL'''}
2\|V\|_2^2= 
2\sum_{k\in \Delta_r} \gamma_k \Bigl\|\phi_k \otimes C_rX^{(k)}\Bigr\|_2^2= 
2\sum_{k\in \Delta_r} \gamma_k \|C_rX^{(k)}\|^2.
\end{align}
\qed
\end{proof}

Note that  
\begin{align*}
\|C_r X^{(k)}\|^2=
\sum_{s\neq r} \sum_{j \in \Delta_s} \frac{\mu_s}{(\mu_s - \mu_r)^2}\eta_{k,j}^2,
\end{align*}
where 
$
\eta_{k,j}:= \mu_s^{-1/2}\langle X^{(k)},\theta_j\rangle, j\in \Delta_s, k\in \Delta_r, s\neq r 
$ 
are i.i.d. standard normal random variables, $\{\theta_j: j\in \Delta_s\}$ being an orthonormal basis 
of the eigenspace corresponding to $\mu_s, s\geq 1.$ 
In view of representation (\ref{repr_LL'''}), we get
$$
n\|L_r(E)\|_2^2\stackrel{d}{=}
2\sum_{k\in \Delta_r}\sum_{s\neq r} \sum_{j \in \Delta_s} \frac{\gamma_k \mu_s}{(\mu_s - \mu_r)^2}\eta_{k,j}^2.
$$
and, since $\gamma_k, k\in \Delta_r$ and $\eta_{k,j}, j\in \Delta_s, k\in \Delta_r$ are independent,
\begin{align*}
&
n{\mathbb E}\|L_r(E)\|_2^2=
2\sum_{k\in \Delta_r}\sum_{s\neq r} \sum_{j \in \Delta_s} \frac{{\mathbb E}\gamma_k \mu_s}{(\mu_s - \mu_r)^2}
=2\sum_{s\neq r} \frac{{\mathbb E}{\rm tr}(\Gamma_r) m_s\mu_s}{(\mu_s - \mu_r)^2}
\\
&
=
2\sum_{s\neq r} \frac{{\rm tr}(P_r\Sigma P_r)m_s\mu_s}{(\mu_s - \mu_r)^2}
=2\sum_{s\neq r} \frac{m_r \mu_r m_s \mu_s}{(\mu_s - \mu_r)^2}
=2{\rm tr}(P_r\Sigma P_r){\rm tr}(C_r\Sigma C_r)=A_r(\Sigma).
\end{align*}
Therefore,
\begin{align}
\label{relay_a}
&
\|L_r(E)\|_2^2 -{\mathbb E}\|L_r(E)\|_2^2 \stackrel{d}{=}
\frac{2}{n}\sum_{k\in \Delta_r}\sum_{s\neq r} \sum_{j \in \Delta_s} \frac{\mu_r \mu_s}{(\mu_s - \mu_r)^2}\frac{\gamma_k}{\mu_r}(\eta_{k,j}^2-1)
\\
&
\nonumber
+
\frac{2}{n}\sum_{k\in \Delta_r}\sum_{s\neq r} \frac{\mu_r m_s \mu_s}{(\mu_s - \mu_r)^2}\biggl(\frac{\gamma_k}{\mu_r}-1\biggr).
\end{align}

In order to control the right hand side in the above display, the following elementary lemma will be used.

\begin{lemma}
\label{lem:chi-dev}
Let $\{\xi_k\}$ be i.i.d. standard normal random variables. 
There exists a numerical constant $c>0$ such that for all $t>0$
$$
\mathbb P \left(\left|\sum_{k}\lambda_k (\xi_k^2 - 1)\right| \geq t \right) \leq 2\left( \exp\left\lbrace  - \frac{c t^2}{\sum_k \lambda_k^2}\right\rbrace \bigvee \exp\left\lbrace - \frac{c t}{\sup_k |\lambda_k|} \right\rbrace\right).
$$
\end{lemma}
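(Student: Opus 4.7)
This is a Bernstein-type tail bound for a weighted sum of centered $\chi^2_1$ variables, which I would establish by the standard Cram\'er--Chernoff method. First, for a standard Gaussian $\xi$, the moment generating function is
$$
\mathbb{E}\bigl[e^{s(\xi^2-1)}\bigr] = (1-2s)^{-1/2} e^{-s}, \qquad s<1/2.
$$
A short Taylor expansion, $-\tfrac{1}{2}\log(1-2s)-s = \sum_{k\ge 2}(2s)^k/(2k)$, majorized by a geometric series, yields the sub-exponential log-MGF bound
$$
\log \mathbb{E}\bigl[e^{s(\xi^2-1)}\bigr] \le \frac{s^2}{1-2|s|}, \qquad |s|<1/2.
$$

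Second, writing $v:=\sum_k\lambda_k^2$ and $M:=\sup_k|\lambda_k|$, independence and the scaling $s\mapsto s\lambda_k$ give, for every $s\in(0,1/(2M))$,
$$
\log \mathbb{E}\Bigl[\exp\Bigl(s\sum_k\lambda_k(\xi_k^2-1)\Bigr)\Bigr] \le \sum_k \frac{s^2\lambda_k^2}{1-2s|\lambda_k|} \le \frac{s^2 v}{1-2sM}.
$$
Markov's inequality then gives
$$
\mathbb{P}\Bigl(\sum_k \lambda_k(\xi_k^2-1)\ge t\Bigr) \le \exp\Bigl(-st + \frac{s^2 v}{1-2sM}\Bigr),
$$
and choosing $s=t/\bigl(2(v+Mt)\bigr)$ (which lies in the admissible range) produces the classical Bernstein bound
$$
\mathbb{P}\Bigl(\sum_k \lambda_k(\xi_k^2-1)\ge t\Bigr) \le \exp\Bigl(-\frac{t^2}{4(v+Mt)}\Bigr) \le \exp\Bigl(-\min\Bigl(\frac{t^2}{8v},\,\frac{t}{8M}\Bigr)\Bigr),
$$
which is exactly the upper tail of the lemma with $c=1/8$.

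Third, the lower tail follows from the same argument applied to the sequence $(-\lambda_k)$, since $v$ and $M$ are invariant under sign change. A union bound over the two tails yields the stated two-sided inequality with an extra factor $2$. There is no real obstacle in this proof: each ingredient (explicit chi-squared MGF, Taylor majorization, Chernoff optimization) is routine, so the whole argument amounts to a careful but elementary computation.
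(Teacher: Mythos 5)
Your proof is correct and follows essentially the same route as the paper: both arguments are Cram\'er--Chernoff bounds built on the explicit moment generating function of $\xi^2-1$, differing only in how the log-MGF is majorized (you use the Bernstein-type bound $s^2 v/(1-2sM)$ on the full range $s<1/(2M)$, while the paper uses a sub-Gaussian bound $4u^2\sum_k\lambda_k^2$ valid on a restricted range of $u$). Both lead to the same sub-Gaussian/sub-exponential mixture after optimization, so no further comment is needed.
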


\begin{proof}
By a simple computation,
$$
\mathbb E \exp\left\lbrace  u \sum_k \lambda_k (\xi_k^2 - 1)  \right\rbrace = \prod_k \frac{1}{\sqrt{e^{2u \lambda_k}(1 - 2 u \lambda_k)}}
$$
for all $u>0$ such that $2u \sup_k \lambda_k < 1.$
Since $e^{x}(1 - x) \geq (1+x) (1- x) = 1 - x^2$ for $x\in (0,1)$, we easily get
$$
\frac{1}{\sqrt{e^x(1-x)}} \leq \frac{1}{\sqrt{1-x^2}} \leq \sqrt{1+2x^2} \leq e^{x^2}, x\in (0, 2^{-1/2}).
$$
This implies that for all $u>0$ satisfying the condition 
$
2u \sup_k \lambda_k < 2^{-1/2},
$
the following bound holds:
$$
\mathbb E \exp\left\lbrace  u \sum_k \lambda_k (\xi_k^2 - 1)  \right\rbrace \leq \exp\left \lbrace 4 u^2 \sum_k \lambda_k^ 2\right\rbrace
$$
The bound on $\mathbb P \left(\sum_{k}\lambda_k (\xi_k^2 - 1) \geq t \right)$
now follows by a standard application of Markov's inequality and optimizing the resulting bound with respect to $u.$

Similarly, 
$$
\mathbb E \exp\left\lbrace  u \sum_k \lambda_k (1-\xi_k^2 )  \right\rbrace = \prod_k \frac{e^{u\lambda_k}}{\sqrt{1+2 u \lambda_k}}.
$$
Since $\frac{e^x}{\sqrt{1+2x}}=\exp\left\{x-\frac{1}{2}\log (1+2x)\right\}\leq e^{x^2}, x\geq 0,$ we 
get 
$$
\mathbb E \exp\left\lbrace  u \sum_k \lambda_k (1-\xi_k^2 )  \right\rbrace \leq \exp \biggl\{u^2\sum_{k}\lambda_k^2\biggr\}, u\geq 0,
$$
implying  the bound on the lower tail. 
\qed
\end{proof}

Applying the bound of the lemma to the first term in the right hand side of relationship (\ref{relay_a}) 
{\it conditionally on $\gamma_k, k\in \Delta_r,$} 
we get that with probability at least $1-e^{-t}$
\begin{align*}
&
\biggl|\frac{2}{n}\sum_{k\in \Delta_r}\sum_{s\neq r} \sum_{j \in \Delta_s} \frac{\mu_r \mu_s}{(\mu_s - \mu_r)^2}\frac{\gamma_k}{\mu_r}(\eta_{k,j}^2-1)\biggr|\lesssim 
\\
&
\biggl(\sum_{s\neq r} \frac{\mu_r^2 m_s \mu_s^2}{(\mu_s - \mu_r)^4}\frac{\sum_{k\in \Delta_r}\gamma_k^2}{\mu_r^2}\biggr)^{1/2}
\frac{\sqrt{t}}{n}\bigvee \sup_{s\neq r} \frac{\mu_r \mu_s}{(\mu_s - \mu_r)^2}\frac{\sup_{k\in \Delta_r}\gamma_k}{\mu_r}\frac{t}{n}.
\end{align*}
Since $\sup_{k\in \Delta_r}\gamma_k=\|\Gamma_r\|_{\infty},$ $\sum_{k\in \Delta_r}\gamma_k^2\leq m_r \|\Gamma_r\|_{\infty}^2$
and 
$$
B_r^2(\Sigma)=8\sum_{s\neq r} \frac{m_r \mu_r^2 m_s \mu_s^2}{(\mu_s-\mu_r)^4},
$$
the last bound can be rewritten as 
\begin{align}
\label{term_1}
&
\biggl|\frac{2}{n}\sum_{k\in \Delta_r}\sum_{s\neq r} \sum_{j \in \Delta_s} \frac{\mu_r \mu_s}{(\mu_s - \mu_r)^2}\frac{\gamma_k}{\mu_r}(\eta_{k,j}^2-1)\biggr|\lesssim 
B_r(\Sigma) \frac{\|\Gamma_r\|_{\infty}}{\mu_r}\frac{\sqrt{t}}{n}
\bigvee \frac{\|\Sigma\|_{\infty}^2}{\bar g_r^2}\frac{\|\Gamma_r\|_{\infty}}{\mu_r}\frac{t}{n}.
\end{align}
As to the second term in the right hand of (\ref{relay_a}), the following bound is straightforward:
\begin{align}
\label{term_2}
&
\biggl|\frac{2}{n}\sum_{k\in \Delta_r}\sum_{s\neq r} \frac{\mu_r m_s \mu_s}{(\mu_s - \mu_r)^2}\biggl(\frac{\gamma_k}{\mu_r}-1\biggr)\biggr|
\\
&
\nonumber
\leq \frac{2}{n} \sum_{s\neq r}\frac{m_r \mu_r m_s\mu_s}{(\mu_s - \mu_r)^2}\frac{\|\Gamma_r - P_r \Sigma P_r\|_{\infty}}{\mu_r}
=\frac{A_r(\Sigma)}{n}\frac{\|\Gamma_r - P_r \Sigma P_r\|_{\infty}}{\mu_r}. 
\end{align}
Theorems \ref{th_operator} and \ref{spectrum_sharper} easily imply that for all $t\geq 1$ with probability at least $1-e^{-t}$
\begin{equation}
\nonumber
\|\Gamma_r-\mu_r P_r\|_{\infty}=\biggl\|n^{-1}\sum_{j=1}^n P_r X_j \otimes P_r X_j - {\mathbb E}(P_r X\otimes P_r X)\biggr\|_{\infty}
\lesssim \mu_r \biggl(\sqrt{\frac{m_r}{n}}\bigvee \frac{m_r}{n}\bigvee \sqrt{\frac{t}{n}}\bigvee \frac{t}{n}\biggr).
\end{equation}
Under additional assumptions $m_r\lesssim n,$ $t\lesssim n,$ this bound could be simplified as 
\begin{equation}
\label{term_3}
\|\Gamma_r-\mu_r P_r\|_{\infty}
\lesssim \mu_r \biggl(\sqrt{\frac{m_r}{n}}\bigvee \sqrt{\frac{t}{n}}\biggr)
\end{equation}
and it implies that $\frac{\|\Gamma_r\|_{\infty}}{\mu_r}\lesssim 1. $ 

Thus, representation (\ref{relay_a}) and bounds (\ref{term_1}), (\ref{term_2}) imply 
that with probability at least $1-e^{-t}$
\begin{equation}
\label{L_r_conc}
\left|\|L_r(E)\|_2^2 -{\mathbb E}\|L_r(E)\|_2^2\right|\lesssim 
B_r(\Sigma) \frac{\sqrt{t}}{n}
\bigvee \frac{\|\Sigma\|_{\infty}^2}{\bar g_r^2}\frac{t}{n}
\bigvee \frac{A_r(\Sigma)}{n}\left(\sqrt{\frac{m_r}{n}}\bigvee \sqrt{\frac{t}{n}}\right).
\end{equation}
 
To complete the proof, it is enough to combine bound (\ref{L_r_conc}) with concentration inequality 
of Theorem \ref{technical_2}, to use bound  (\ref{A_n_bou}) to control $A_r(\Sigma)$ and to take into 
account conditions (\ref{siml_assump}) to simplify the resulting bound.

\qed

\end{proof}

\section{Normal approximation of squared Hilbert--Schmidt norm errors of empirical spectral projectors}
\label{Sec:CLT-HS}

The main result of this section is the following theorem:

\begin{theorem}
\label{th:normal_approx}
Suppose that, for some constants $c_1,c_2>0,$ $m_r\leq c_1$ and $\|\Sigma\|_{\infty}\leq c_2 \bar g_r.$ 
Suppose also condition (\ref{cond_gamma}) holds with some $\gamma\in (0,1).$
Then, the following bounds hold with some constant $C>0$ depending only on $\gamma, c_1, c_2:$
\begin{align}
\label{normal_approx_A}
&
\nonumber
\sup_{x\in {\mathbb R}}
\left|{\mathbb P}\left\{\frac{n}{B_r(\Sigma)}\left(\|\hat P_r-P_r\|_2^2-{\mathbb E}\|\hat P_r-P_r\|_2^2\right)\leq x\right\}-\Phi(x)\right|
\\
&
\leq 
C\left[\frac{1}{B_r(\Sigma)}+\frac{{\bf r}(\Sigma)}{B_r(\Sigma)\sqrt{n}}\sqrt{\log \left(\frac{B_r(\Sigma)\sqrt{n}}{{\bf r}(\Sigma)}\bigvee 2\right) }+ \frac{\log n}{\sqrt{n}}\right]
\end{align}
and
\begin{align}
\label{normal_approx}
&
\nonumber
\sup_{x\in {\mathbb R}}
\left|{\mathbb P}\left\{\frac{\|\hat P_r-P_r\|_2^2-{\mathbb E}\|\hat P_r-P_r\|_2^2}{{\rm Var}^{1/2}(\|\hat P_r-P_r\|_2^2)}\leq x\right\}-\Phi(x)\right|
\\
&
\leq 
C\left[\frac{1}{B_r(\Sigma)}+\frac{{\bf r}(\Sigma)}{B_r(\Sigma)\sqrt{n}}\sqrt{\log \left(\frac{B_r(\Sigma)\sqrt{n}}{{\bf r}(\Sigma)}\bigvee 2\right) }+ \frac{\log n}{\sqrt{n}}\right],
\end{align}
where $\Phi(x)$ denotes the distribution function of standard normal random variable.
\end{theorem}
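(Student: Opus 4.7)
My plan is to prove (\ref{normal_approx_A}) first and then deduce (\ref{normal_approx}) from a matching variance estimate $\mathrm{Var}(\|\hat P_r-P_r\|_2^2)=(1+o(1))B_r^2(\Sigma)/n^2$ (proved along the lines of Theorem \ref{risk_bd}(2) combined with the second-moment computations below) together with an elementary perturbation argument on the normalizing constant inside the Kolmogorov distance.

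The first step is to reduce normal approximation of $\|\hat P_r-P_r\|_2^2$ to normal approximation of the linearization $\|L_r(E)\|_2^2$, using Theorem \ref{technical_2} with $t\asymp \log n$. Under the hypotheses of the theorem the recentered remainder $\|\hat P_r-P_r\|_2^2-\|L_r(E)\|_2^2-\E[\cdot]$ is bounded by $\mathbf{r}(\Sigma)\sqrt{\log n}/n^{3/2}$ outside an event of probability at most $n^{-C}$, so by the standard smoothing inequality for the Kolmogorov distance this reduction costs at most $\mathbf{r}(\Sigma)\sqrt{\log n}/(B_r(\Sigma)\sqrt{n})+\log n/\sqrt{n}$ in the final bound, matching two of its three terms.

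Next I would invoke Lemma \ref{repr_d} and the decomposition (\ref{relay_a}) from the proof of Theorem \ref{HS-conc} to obtain the distributional identity
$$n\bigl(\|L_r(E)\|_2^2-\E\|L_r(E)\|_2^2\bigr)\stackrel{d}{=}T_1+T_2,$$
where $T_1=\sum_{k\in\Delta_r}(\gamma_k/\mu_r)W_k$ with $W_k=2\sum_{s\neq r,\,j\in\Delta_s}\tfrac{\mu_r\mu_s}{(\mu_s-\mu_r)^2}(\eta_{k,j}^2-1)$ i.i.d.\ and independent of $\{\gamma_k\}$, and $T_2=2\,\mathrm{tr}(C_r\Sigma C_r)\bigl(\mathrm{tr}(\Gamma_r)-m_r\mu_r\bigr)$. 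Setting $\gamma_k=\mu_r$ inside $T_1$ produces the idealized variable $T_1^0:=\sum_k W_k$, a sum of independent centered weighted $\chi^2$ variables with variance exactly $B_r^2(\Sigma)$, coefficients $c_{k,j}=2\mu_r\mu_{s(j)}/(\mu_{s(j)}-\mu_r)^2\lesssim \|\Sigma\|_\infty^2/\bar g_r^2\lesssim 1$, and uniformly bounded third moments. The classical Berry--Esseen inequality applied to $T_1^0$ then gives
$$\sup_x\bigl|\P\{T_1^0/B_r(\Sigma)\leq x\}-\Phi(x)\bigr|\lesssim \frac{\sum_{k,j}c_{k,j}^3}{B_r^3(\Sigma)}\lesssim \frac{\max_{k,j} c_{k,j}}{B_r(\Sigma)}\lesssim \frac{1}{B_r(\Sigma)},$$
which produces the first term in the target bound.

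It remains to control the residuals $|T_1-T_1^0|$ and $|T_2|$. Theorems \ref{th_operator}--\ref{spectrum_sharper} applied on the $m_r$-dimensional eigenspace of $\mu_r$ give $\|\Gamma_r-\mu_r P_r\|_\infty\vee|\mathrm{tr}(\Gamma_r)-m_r\mu_r|\lesssim \mu_r\sqrt{t/n}$ with probability at least $1-e^{-t}$; combined with sub-exponential tail bounds for $|W_k|$ this yields $|T_1-T_1^0|/B_r(\Sigma)\lesssim t/\sqrt{n}$, while using $\mathrm{tr}(C_r\Sigma C_r)\leq \mathbf{r}(\Sigma)\|\Sigma\|_\infty/\bar g_r^2\lesssim \mathbf{r}(\Sigma)$ yields $|T_2|/B_r(\Sigma)\lesssim \mathbf{r}(\Sigma)\sqrt{t/n}/B_r(\Sigma)$. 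Choosing $t\asymp \log\bigl(B_r(\Sigma)\sqrt{n}/\mathbf{r}(\Sigma)\vee 2\bigr)$ and applying smoothing once more delivers the remaining logarithmic error terms. The hard part will be the joint treatment of the random scalings $\gamma_k$ inside $T_1$: the natural route of conditioning on $\Gamma_r$ and applying Berry--Esseen conditionally forces one to control the gap between the conditional variance $8\|C_r\Sigma C_r\|_2^2\|\Gamma_r\|_2^2$ and the target $B_r^2(\Sigma)$, and this variance mismatch is precisely what drives the $\mathbf{r}(\Sigma)/(B_r(\Sigma)\sqrt{n})$ rate in the final bound.
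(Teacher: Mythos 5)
Your proposal follows essentially the same route as the paper's proof: linearization via Theorem \ref{technical_2}, the distributional identity of Lemma \ref{repr_d} leading to the decomposition (\ref{relay_a}) into an idealized sum of independent weighted centered $\chi^2$ variables (handled by Berry--Esseen, with the same third-moment computation yielding the $1/B_r(\Sigma)$ term) plus remainders driven by $\Gamma_r-\mu_r P_r$, followed by smoothing and the variance substitution via Theorem \ref{th:varvar}. The only slip is bookkeeping: using $t\asymp\log n$ in the reduction step gives $\frac{{\bf r}(\Sigma)}{B_r(\Sigma)\sqrt n}\sqrt{\log n}$, which is not dominated by the stated bound in general, so you should instead take a single $t$ equal to the minimum of the three logarithms as in (\ref{t_def}), exactly as the paper does.
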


This result essentially means that as soon as $B_r(\Sigma)\to \infty$ and $\frac{{\bf r}(\Sigma)}{B_r(\Sigma)\sqrt{n}}\to 0$
as $n\to \infty$ (for $\Sigma=\Sigma^{(n)}$), 
the sequence of random variables $\frac{\|\hat P_r-P_r\|_2^2-{\mathbb E}\|\hat P_r-P_r\|_2^2}{{\rm Var}^{1/2}(\|\hat P_r-P_r\|_2^2)}$
is asymptotically standard normal.

We will first establish the following fact that would allow us to replace $\frac{B_r(\Sigma)}{n}$
in bound (\ref{normal_approx_A}) by a normalizing factor ${\rm Var}^{1/2}(\|\hat P_r-P_r\|_2^2)$
in bound (\ref{normal_approx}).

\begin{theorem}
\label{th:varvar}
Suppose condition (\ref{cond_gamma}) holds for some $\gamma\in (0,1).$ 
Then the following bound holds with some constant $C_{\gamma}>0:$
\begin{align}
\label{variance_bd}
&
\left|\frac{n}{B_r(\Sigma)}{\rm Var}^{1/2}(\|\hat P_r-P_r\|_2^2)-1\right|
\leq 
C_{\gamma} m_r\frac{\|\Sigma\|_{\infty}^3}{\bar g_r^3}\frac{{\bf r}(\Sigma)}{B_r(\Sigma)\sqrt{n}}+
\frac{m_r+1}{n}.
\end{align}
\end{theorem}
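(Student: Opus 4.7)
The overall plan is to use the linear approximation $\hat P_r-P_r=L_r(E)+S_r(E)$ from Lemma \ref{lem-pert-spectral} to reduce the problem to an exact variance computation for $\|L_r(E)\|_2^2$, which is accessible through the distributional representation of Lemma \ref{repr_d}.

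First, by the triangle inequality for the $L^2$-norm of centered random variables,
$$\left|{\rm Var}^{1/2}(\|\hat P_r-P_r\|_2^2)-{\rm Var}^{1/2}(\|L_r(E)\|_2^2)\right|\leq{\rm Var}^{1/2}\!\left(\|\hat P_r-P_r\|_2^2-\|L_r(E)\|_2^2\right).$$
Integrating the sub-exponential tail bound (\ref{remaind+}) of Theorem \ref{technical_2} (the dominant contribution coming from $t\asymp 1$) produces
$${\rm Var}^{1/2}\!\left(\|\hat P_r-P_r\|_2^2-\|L_r(E)\|_2^2\right)\lesssim_\gamma m_r\frac{\|\Sigma\|_\infty^3}{\bar g_r^3}\frac{\mathbf{r}(\Sigma)}{n^{3/2}},$$
which, after dividing by $B_r(\Sigma)/n$, is precisely the first term on the right of (\ref{variance_bd}).

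It therefore suffices to show that ${\rm Var}^{1/2}(\|L_r(E)\|_2^2)=(B_r(\Sigma)/n)(1+O((m_r+1)/n))$, up to a contribution absorbed into the first term of the bound. Using the decomposition (\ref{relay_a}) established in the proof of Theorem \ref{HS-conc}, write
$$\|L_r(E)\|_2^2-\mathbb{E}\|L_r(E)\|_2^2\stackrel{d}{=}\mathrm{I}+\mathrm{II},$$
where $\mathrm{I}$ is a sum of centered chi-squares $(\eta_{k,j}^2-1)$ with random coefficients proportional to $\gamma_k$, while $\mathrm{II}$ is a linear functional of $\gamma_k-\mu_r$ alone. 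Since the $\eta$'s are independent of the $\gamma$'s and $\mathbb{E}[\mathrm{I}\mid\gamma]=0$, the two terms are uncorrelated, so ${\rm Var}(\|L_r(E)\|_2^2)=\mathbb{E}[{\rm Var}(\mathrm{I}\mid\gamma)]+{\rm Var}(\mathrm{II})$. A direct conditional chi-square variance calculation yields
$${\rm Var}(\mathrm{I}\mid\gamma)=\frac{B_r^2(\Sigma)}{n^2 m_r}\cdot\frac{1}{\mu_r^2}\sum_{k\in\Delta_r}\gamma_k^2,$$
and since $\Gamma_r/\mu_r$ is an $m_r$-dimensional sample covariance of $n$ standard Gaussians on the $\mu_r$-eigenspace, the elementary Wishart second-moment identity $\mathbb{E}[\sum_k\gamma_k^2/\mu_r^2]=m_r+m_r(m_r+1)/n$ gives
$$\mathbb{E}[{\rm Var}(\mathrm{I}\mid\gamma)]=\frac{B_r^2(\Sigma)}{n^2}\left(1+\frac{m_r+1}{n}\right).$$
Taking square roots, dividing by $B_r(\Sigma)/n$, and using $|\sqrt{1+x}-1|\leq x/2$ produces the $(m_r+1)/n$ term on the right of (\ref{variance_bd}).

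Finally, ${\rm Var}({\rm tr}(\Gamma_r))=2m_r\mu_r^2/n$ yields ${\rm Var}(\mathrm{II})=(8 m_r\mu_r^2/n^3)({\rm tr}(C_r\Sigma C_r))^2$. Combining the trace bound ${\rm tr}(C_r\Sigma C_r)\leq\|\Sigma\|_\infty\mathbf{r}(\Sigma)/\bar g_r^2$ with $\mu_r\leq\|\Sigma\|_\infty$ and the elementary chain $\bar g_r\leq\mu_r\leq\|\Sigma\|_\infty$ (which forces $\sqrt{m_r}\leq m_r\|\Sigma\|_\infty/\bar g_r$ whenever $m_r\geq 1$), one checks that $(n/B_r(\Sigma)){\rm Var}^{1/2}(\mathrm{II})$ is dominated by the first term of (\ref{variance_bd}). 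The main technical obstacle is the middle step: extracting the precise leading constant $B_r^2(\Sigma)/n^2$ and the $(m_r+1)/n$ correction hinges entirely on the conditional independence of $\{\gamma_k\}$ and $\{\eta_{k,j}\}$ supplied by Lemma \ref{repr_d}, together with the explicit Wishart trace second moment $\mathbb{E}\,{\rm tr}((\Gamma_r/\mu_r)^2)=m_r+m_r(m_r+1)/n$.
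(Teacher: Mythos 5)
Your proposal is correct and follows essentially the same route as the paper: the paper also compares $\mathrm{Var}^{1/2}(\|\hat P_r-P_r\|_2^2)$ with $\mathrm{Var}^{1/2}(\|L_r(E)\|_2^2)$ by integrating the tails of Theorem \ref{technical_2}, and computes the latter variance exactly via the law of total variance applied to the representation of Lemma \ref{repr_d} (conditioning on $P_rX_1,\dots,P_rX_n$), obtaining the same Wishart second moment $\mathbb{E}\|\Gamma_r\|_2^2=m_r\mu_r^2\bigl(1+\frac{m_r+1}{n}\bigr)$, the same term $\mathrm{Var}(\mathrm{tr}(\Gamma_r))\,(\mathbb{E}\|C_rX\|^2)^2=\frac{A_r^2(\Sigma)}{2m_rn}\cdot\frac{4}{n^2}$, and the same final absorption using $\bar g_r\leq \|\Sigma\|_\infty$ and $m_r\geq 1$. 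Your splitting into the uncorrelated pieces $\mathrm{I}$ and $\mathrm{II}$ of (\ref{relay_a}) is just a rephrasing of that total-variance decomposition, so there is nothing substantively different to compare.
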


Bound (\ref{variance_bd}) shows that, under the assumptions 
$m_r\frac{\|\Sigma\|_{\infty}^3}{\bar g_r^3}\frac{{\bf r}(\Sigma)}{B_r(\Sigma)\sqrt{n}}=o(1)$ and $m_r=o(n),$
we have 
$${\rm Var}^{1/2}(\|\hat P_r-P_r\|_2^2)=(1+o(1))\frac{B_r(\Sigma)}{n}.$$

\begin{rema}
Note that in the case of spiked covariance model (\ref{spike}), for $r=1,\dots, m,$ 
\begin{equation}
\label{define_B_r_spike}
B_r(\Sigma):=2\sqrt{2}\biggl(\sum_{1\leq j\leq m, j\neq r}\frac{(s_r^2+\sigma^2)(s_j^2+\sigma^2)}{(s_r^2-s_j^2)^4} 
+\frac{(s_r^2+\sigma^2)^2 \sigma^4 (p-m)}{s_r^8}\biggr)^{1/2},
\end{equation}
which, under the assumption that the parameters $m, s_1^2,\dots, s_m^2, \sigma^2$ are fixed, but $p=p_n\to \infty$ as $n\to \infty$
yields that 
\begin{equation}
\label{B_r_large_p}
B_r(\Sigma)=(1+o(1))\frac{2\sqrt{2}(s_r^2+\sigma^2)\sigma^2\sqrt{p}}{s_r^4}\ \ {\rm as}\ \ n\to\infty.
\end{equation}
Note also that ${\bf r}(\Sigma)\sim \frac{\sigma^2}{s_1^2+\sigma^2}p.$ Thus, the condition $p=o(n)$ implies 
$\frac{{\bf r}(\Sigma)}{B_r(\Sigma)\sqrt{n}}\to 0$ as $n\to\infty.$ 
Therefore, Theorem \ref{th:varvar} yields that 
$$
{\rm Var}^{1/2}(\|\hat P_r-P_r\|_2^2)=(1+o(1))\frac{2\sqrt{2}(s_r^2+\sigma^2)\sigma^2}{s_r^4}\frac{\sqrt{p}}{n}.
$$
Moreover, the bounds on the accuracy of normal approximation 
of Theorem \ref{th:normal_approx} are of the order 
$$
\sup_{x\in {\mathbb R}}
\left|{\mathbb P}\left\{\frac{\|\hat P_r-P_r\|_2^2-{\mathbb E}\|\hat P_r-P_r\|_2^2}{{\rm Var}^{1/2}(\|\hat P_r-P_r\|_2^2)}\leq x\right\}-\Phi(x)\right|
\leq C\biggl[\frac{1}{\sqrt{p}}+\sqrt{\frac{p}{n}\log \left(\frac{n}{p}\vee 2\right)} + \frac{\log n}{\sqrt{n}}\biggr],
$$
so, the asymptotic normality of $\|\hat P_r-P_r\|_2^2$ holds if $p=p_n\to \infty$ and $p=o(n)$ as $n\to \infty.$
\end{rema}

\begin{proof}
In view of relationships $n\|L_r(E)\|_2^2\stackrel{d}{=}2\|V\|_2^2$ and (\ref{repr_LL'''}) (see the proof of Lemma \ref{repr_d}), we have
\begin{align}
\label{var-LE-1}
\mathrm{Var}(\|L_r(E)\|_2^2) &= \frac{4}{n^2} \mathrm{Var}(\|V\|_2^2)= \frac{4}{n^2} \mathrm{Var}\left(\sum_{k\in \Delta_r} \gamma_k \|C_rX^{(k)}\|^2\right)\notag\\
&= \frac{4}{n^2} \mathbb E\left[\mathrm{Var}\left(\sum_{k\in \Delta_r} \gamma_k \|C_rX^{(k)}\|^2 \Big| P_rX_1,\ldots,P_r X_n\right) \right]\notag\\
&\hspace{1cm} + \frac{4}{n^2}\mathrm{Var}\left( \mathbb E \left[ \sum_{k\in \Delta_r} \gamma_k \|C_rX^{(k)}\|^2 \Big| P_rX_1,\ldots,P_r X_n \right]\right).
\end{align}
Recall that $\gamma_k$, $k \in \Delta_r$ depend only $P_rX_1,\ldots,P_r X_n$ and that $X^{(k)}$, $k\in \Delta_r$ are independent of $X_1,\ldots, X_n$. Thus, we get
\begin{align}
\label{var-LE-2}
&\mathbb E\left[\mathrm{Var}\left(\sum_{k\in \Delta_r} \gamma_k \|C_rX^{(k)}\|^2 \Big| P_rX_1,\ldots,P_r X_n\right) \right]=\notag\\
&=\E\left[\sum_{k\in \Delta_r} \gamma_k^2 \mathrm{Var}\left( \|C_rX^{(k)}\|^2 \right) \right]= \sum_{k\in \Delta_r} \E\left[\gamma_k^2\right] \mathrm{Var}\left( \|C_rX^{(k)}\|^2 \right)\notag\\
&=   \E\left[\|\Gamma_r\|_2^2\right] \mathrm{Var}\left( \|C_rX\|^2 \right).
\end{align}
By an easy computation,
$$
{\mathbb E}\|\Gamma_r\|_2^2 = {\mathbb E}\left\|n^{-1}\sum_{j=1}^n P_r X_j \otimes P_r X_j\right\|_2^2
=m_r \mu_r^2 \left(1+\frac{m_r+1}{n}\right)
$$
and, for i.i.d. standard normal random variables $\{\eta_j\}$ 
$$
{\rm Var}(\|C_r X\|^2)= {\rm Var}\left(\sum_{s\neq r}\sum_{j\in \Delta_s}\frac{\mu_s}{(\mu_s-\mu_r)^2}\eta_j^2\right)=
\frac{1}{4m_r \mu_r^2} B_r^2(\Sigma). 
$$
Therefore,
\begin{equation}
\label{var-LE-10}
\mathbb E\left[\mathrm{Var}\left(\sum_{k\in \Delta_r} \gamma_k \|C_rX^{(k)}\|^2 \Big| P_rX_1,\ldots,P_r X_n\right)\right]
= \frac{B_r^2(\Sigma)}{4}\left(1+\frac{m_r+1}{n}\right).
\end{equation}
Similarly, we have 
\begin{align}
\label{var-LE-3}
&\mathrm{Var}\left( \mathbb E \left[ \sum_{k\in \Delta_r} \gamma_k \|C_rX^{(k)}\|^2 \Big| P_rX_1,\ldots,P_r X_n \right]\right)= \mathrm{Var}\left( \sum_{k\in \Delta_r} \gamma_k \mathbb E \left[ \|C_rX^{(k)}\|^2 \right]\right) \notag\\
&\hspace{2cm} = \mathrm{Var}\left( \mathrm{tr}(\Gamma_r)\right) \left(\mathbb E \left[ \|C_rX\|^2 \right]\right)^2
\end{align}
and 
$$
\mathrm{Var}\left( \mathrm{tr}(\Gamma_r)\right)
=\frac{2m_r \mu_r^2}{n},\ \ \ 
\left(\mathbb E \left[ \|C_rX\|^2 \right]\right)^2=\frac{1}{4 m_r^2 \mu_r^2}A_r^2(\Sigma),
$$
implying that 
\begin{equation}
\label{var-LE-11}
\mathrm{Var}\left( \mathbb E \left[ \sum_{k\in \Delta_r} \gamma_k \|C_rX^{(k)}\|^2 \Big| P_rX_1,\ldots,P_r X_n \right]\right)
=
\frac{A_r^2(\Sigma)}{2m_r n}. 
\end{equation}
It follows from (\ref{var-LE-1}), (\ref{var-LE-10}) and (\ref{var-LE-11}) that 
\begin{equation}
\label{var-LE-11''}
\mathrm{Var}(\|L_r(E)\|_2^2)= 
\frac{B_r^2(\Sigma)}{n^2}\left(1+\frac{m_r+1}{n}\right)
+\frac{2A_r^2(\Sigma)}{m_r n^3}.
\end{equation}

Denote now
\begin{equation}
\label{xi_eta}
\xi := \|\hat P_r-P_r\|_2^2-{\mathbb E}\|\hat P_r-P_r\|_2^2,\quad \eta:= \|L_r(E)\|_2^2-{\mathbb E}\|L_r(E)\|_2^2,
\end{equation}
and $\sigma_\xi^2 = \mathbb E\xi^2$, $\sigma_\eta^2 = \mathbb E\eta^2$. 
Combining concentration bound of Theorem \ref{technical_2} with the identity $\mathbb E |\xi - \eta|^{2} = 
\int_{0}^{\infty} \mathbb P\{|\xi - \eta|^2>t\} dt,$ we obtain that
\begin{align}
\label{sig-eta-1}
|\sigma_{\xi} - \sigma_{\eta}| &\leq \sqrt{\mathbb E |\xi - \eta|^2}\leq C_{\gamma}  m_r \frac{\|\Sigma\|_{\infty}^3}{\bar g_r^3} \frac{\mathbf{r}(\Sigma)}{n}  \frac{1}{\sqrt{n}},
\end{align}
for some $C_{\gamma}>0$ depending only on $\gamma$. 

To complete the proof, observe that identity (\ref{var-LE-11''}) implies that 
\begin{align*}
&
\left|\frac{n}{B_r(\Sigma)}{\rm Var}^{1/2}(\|L_r(E)\|_2^2)-1\right| 
\leq \sqrt{1+\frac{m_r+1}{n}+\frac{2A_r^2(\Sigma)}{m_r B_r^2(\Sigma) n}}-1
\\
&
\leq 
\sqrt{1+\frac{m_r+1}{n}}-1 + \frac{\sqrt{2}A_r(\Sigma)}{\sqrt{m_r} B_r(\Sigma) \sqrt{n}}
\leq \frac{m_r+1}{n} + \frac{\sqrt{2}A_r(\Sigma)}{\sqrt{m_r} B_r(\Sigma) \sqrt{n}},
\end{align*}
then bound $A_r(\Sigma)$ using (\ref{A_n_bou}) and combine the resulting 
bound with (\ref{sig-eta-1}).

\qed

\end{proof}

We now return to the proof of Theorem \ref{th:normal_approx}.

\begin{proof}
Under notations (\ref{xi_eta}), we will upper bound 
$
\sup_{x\in {\mathbb R}}\left|{\mathbb P}\left\{\frac{n}{B_r(\Sigma)}\xi\leq x\right\}-\Phi(x)\right|.
$
Theorem \ref{th:varvar} will allow us to rewrite the normalizing factor in terms of the variance.
First recall that by Theorem \ref{technical_2}, with probability at least $1-e^{-t},$
\begin{equation}
\label{conc-conc}
|\xi-\eta|\leq 
L_\gamma m_r\frac{\|\Sigma\|_{\infty}^3}{\bar g_r^3}
\biggl(\frac{{\bf r}(\Sigma)}{n}\bigvee \frac{t}{n}\bigvee \biggl(\frac{t}{n}\biggr)^2\biggr)
\sqrt{\frac{t}{n}}.
\end{equation}
Also, by (\ref{relay_a}),
\begin{align}
\label{relay_a''}
&
\eta \stackrel{d}{=}
\frac{2}{n}\sum_{k\in \Delta_r}\sum_{s\neq r} \sum_{j \in \Delta_s} \frac{\mu_r \mu_s}{(\mu_s - \mu_r)^2}(\eta_{k,j}^2-1)
\\
&
\nonumber
+\frac{2}{n}\sum_{k\in \Delta_r}\sum_{s\neq r} \sum_{j \in \Delta_s} \frac{\mu_r \mu_s}{(\mu_s - \mu_r)^2}
\left(\frac{\gamma_k}{\mu_r}-1\right)(\eta_{k,j}^2-1)
\\
&
\nonumber
+
\frac{2}{n}\sum_{k\in \Delta_r}\sum_{s\neq r} \frac{\mu_r m_s \mu_s}{(\mu_s - \mu_r)^2}\biggl(\frac{\gamma_k}{\mu_r}-1\biggr)
\\
&
\nonumber 
=: \zeta_1+\zeta_2+\zeta_3.
\end{align}
Similarly to bound (\ref{term_1}), we get that with probability at least $1-e^{-t}$
\begin{align}
\label{term_1'}
&
|\zeta_2|\lesssim 
B_r(\Sigma) \frac{\|\Gamma_r-\mu_r P_r\|_{\infty}}{\mu_r}\frac{\sqrt{t}}{n}
\bigvee \frac{\|\Sigma\|_{\infty}^2}{\bar g_r^2}\frac{\|\Gamma_r-\mu_r P_r\|_{\infty}}{\mu_r}\frac{t}{n}.
\end{align}
Assume that $1\leq t\lesssim n$ and $m_r\lesssim n.$
It follows from (\ref{term_1'}), (\ref{term_2}), (\ref{term_3}) and also from bound (\ref{A_n_bou}) on 
$A_r(\Sigma)$ that 
\begin{align}
\label{zeta_23}
&
\left|\frac{n}{B_r(\Sigma)}\left(\zeta_2+\zeta_3\right)\right|
\\
&
\nonumber
\lesssim \left(\sqrt{\frac{m_r}{n}}\bigvee \sqrt{\frac{t}{n}}\right)\sqrt{t}\bigvee 
\frac{\|\Sigma\|_{\infty}^2}{\bar g_r^2}\left(\sqrt{\frac{m_r}{n}}\bigvee \sqrt{\frac{t}{n}}\right)
\frac{t}{B_r(\Sigma)} \bigvee \frac{A_r(\Sigma)}{B_r(\Sigma)}\left(\sqrt{\frac{m_r}{n}}\bigvee \sqrt{\frac{t}{n}}\right)
\\
&
\nonumber
\lesssim
\left(\sqrt{\frac{m_r}{n}}\bigvee \sqrt{\frac{t}{n}}\right)\sqrt{t}\bigvee 
\frac{\|\Sigma\|_{\infty}^2}{\bar g_r^2}\left(\sqrt{\frac{m_r}{n}}\bigvee \sqrt{\frac{t}{n}}\right)
\frac{t}{B_r(\Sigma)} \bigvee 
m_r \frac{\|\Sigma\|_{\infty}^2}{\bar g_r^2}\frac{{\bf r}(\Sigma)}{B_r(\Sigma)}\left(\sqrt{\frac{m_r}{n}}\bigvee \sqrt{\frac{t}{n}}\right).
\end{align}
Under the assumptions of the theorem $m_r\lesssim 1,$ $\|\Sigma\|_{\infty}\lesssim \bar g_r,$ it is easy to get from (\ref{conc-conc}), (\ref{relay_a''}) and (\ref{zeta_23}) that 
\begin{equation}
\label{rep_xi}
\frac{n}{B_r(\Sigma)}\xi \stackrel{d}{=}\tau + \zeta,
\end{equation}
where
\begin{equation}
\label{xi_zeta}
\tau :=
\frac{2}{B_r(\Sigma)}\sum_{k\in \Delta_r}\sum_{s\neq r} \sum_{j \in \Delta_s} \frac{\mu_r \mu_s}{(\mu_s - \mu_r)^2}(\eta_{k,j}^2-1)
\end{equation}
and the remainder $\zeta$ satisfies the following bound with probability at least $1-e^{-t}:$
\begin{equation}
\label{zeta_zeta}
|\zeta|\lesssim \frac{t}{\sqrt{n}}\bigvee \frac{{\bf r}(\Sigma)}{B_r(\Sigma)\sqrt{n}}\sqrt{t}\bigvee \frac{t^{3/2}}{B_r(\Sigma)\sqrt{n}}. 
\end{equation}

We now use Berry-Esseen Theorem and a simple limiting argument that allows one to apply it to 
a (possibly) infinite sum of independent random variables (\ref{xi_zeta}) to get the following bound:
\begin{align}
\label{Berry}
&
\sup_{x\in {\mathbb R}} \left|{\mathbb P}\left\{\tau \leq x\right\}-\Phi(x)\right|
\lesssim \frac{\sum_{s\neq r} \frac{m_r \mu_r^3 m_s \mu_s^3}{(\mu_s - \mu_r)^6}}{\left(\sum_{s\neq r} \frac{m_r \mu_r^2 m_s \mu_s^2}{(\mu_s - \mu_r)^4}\right)^{3/2}}
\lesssim \frac{\|\Sigma\|_{\infty}^2}{\bar g_r^2}\frac{1}{B_r(\Sigma)}, 
\end{align}
where we also used the fact that $B_r^2(\Sigma)=8\sum_{s\neq r} \frac{m_r \mu_r^2 m_s \mu_s^2}{(\mu_s - \mu_r)^4}.$

It follows from (\ref{rep_xi}), (\ref{zeta_zeta}) and (\ref{Berry}) that with some constants $c', c''>0,$ for all $x\in {\mathbb R},$
\begin{align}
\label{B-Ess-1}
&
{\mathbb P}\left\{\frac{n}{B_r(\Sigma)}\xi \leq x\right\}
\leq 
{\mathbb P}\left\{\tau \leq x+c'\left(\frac{t}{\sqrt{n}}\bigvee \frac{{\bf r}(\Sigma)}{B_r(\Sigma)\sqrt{n}}\sqrt{t}\bigvee \frac{t^{3/2}}{B_r(\Sigma)\sqrt{n}}\right)\right\}+ e^{-t} 
\\
&
\nonumber
\leq 
\Phi\left(x+c'\left(\frac{t}{\sqrt{n}}\bigvee \frac{{\bf r}(\Sigma)}{B_r(\Sigma)\sqrt{n}}\sqrt{t}\bigvee \frac{t^{3/2}}{B_r(\Sigma)\sqrt{n}}\right)\right)+ e^{-t}+ \frac{c''}{B_r(\Sigma)}
\\
& 
\nonumber
\leq 
\Phi(x)+
c'\left(\frac{t}{\sqrt{n}}\bigvee \frac{{\bf r}(\Sigma)}{B_r(\Sigma)\sqrt{n}}\sqrt{t}\bigvee \frac{t^{3/2}}{B_r(\Sigma)\sqrt{n}}\right)+ e^{-t} + \frac{c''}{B_r(\Sigma)}, 
\end{align}
where we used the fact that $\Phi$ is a Lipschitz function with constant less than one. 
Quite similarly, 
\begin{align}
\label{B-Ess-2}
&
{\mathbb P}\left\{\frac{n}{B_r(\Sigma)}\xi \leq x\right\}
\geq 
{\mathbb P}\left\{\tau \leq x-c'\left(\frac{t}{\sqrt{n}}\bigvee \frac{{\bf r}(\Sigma)}{B_r(\Sigma)\sqrt{n}}\sqrt{t}\bigvee \frac{t^{3/2}}{B_r(\Sigma)\sqrt{n}}\right)\right\}-e^{-t} 
\\
&
\nonumber
\geq 
\Phi\left(x-c'\left(\frac{t}{\sqrt{n}}\bigvee \frac{{\bf r}(\Sigma)}{B_r(\Sigma)\sqrt{n}}\sqrt{t}\bigvee \frac{t^{3/2}}{B_r(\Sigma)\sqrt{n}}\right)\right)- e^{-t} - \frac{c''}{B_r(\Sigma)}
\\
&
\nonumber
\geq 
\Phi(x)-c'\left(\frac{t}{\sqrt{n}}\bigvee \frac{{\bf r}(\Sigma)}{B_r(\Sigma)\sqrt{n}}\sqrt{t}\bigvee \frac{t^{3/2}}{B_r(\Sigma)\sqrt{n}}\right)-e^{-t} - \frac{c''}{B_r(\Sigma)}. 
\end{align}
It follows from (\ref{B-Ess-1}) and (\ref{B-Ess-2}) that 
\begin{align}
\label{B-Ess}
&
\sup_{x\in {\mathbb R}}\left|{\mathbb P}\left\{\frac{n}{B_r(\Sigma)}\xi \leq x\right\}-\Phi(x)\right|
\\
&
\nonumber
\leq 
c'\left(\frac{t}{\sqrt{n}}\bigvee \frac{{\bf r}(\Sigma)}{B_r(\Sigma)\sqrt{n}}\sqrt{t}\bigvee \frac{t^{3/2}}{B_r(\Sigma)\sqrt{n}}\right)+  \frac{c''}{B_r(\Sigma)}+ e^{-t}.  
\end{align}
The last bound will be used with 
\begin{equation}
\label{t_def}
t=\log B_r(\Sigma)\bigwedge \log \left(\frac{B_r(\Sigma)\sqrt{n}}{{\bf r}(\Sigma)}\bigvee 2\right)\bigwedge \log n,
\end{equation}
which implies that 
\begin{equation}
\label{e-t}
e^{-t}\lesssim \frac{t}{\sqrt{n}}\bigvee \frac{{\bf r}(\Sigma)}{B_r(\Sigma)\sqrt{n}}\sqrt{t}\bigvee \frac{t^{3/2}}{B_r(\Sigma)\sqrt{n}}
\bigvee \frac{1}{B_r(\Sigma)}
\end{equation}
and we also have 
$$
\frac{t^{3/2}}{B_r(\Sigma)\sqrt{n}}\leq \frac{\log n}{\sqrt{n}}\frac{\sqrt{\log B_r(\Sigma)}}{B_r(\Sigma)}. 
$$
Without loss of generality we can assume that $B_r(\Sigma)$ is bounded away from $0$ by a numerical 
constant so that $\frac{\sqrt{\log B_r(\Sigma)}}{B_r(\Sigma)}\leq 1$ (otherwise, the bounds of the theorem
trivially hold). This implies that 
$
\frac{t^{3/2}}{B_r(\Sigma)\sqrt{n}}\leq \frac{\log n}{\sqrt{n}} 
$
and (\ref{B-Ess}) implies 
\begin{align}
\label{B-Ess_A}
&
\sup_{x\in {\mathbb R}}\left|{\mathbb P}\left\{\frac{n}{B_r(\Sigma)}\xi \leq x\right\}-\Phi(x)\right|
\\
&
\nonumber
\leq 
C\left[\frac{1}{B_r(\Sigma)}+\frac{{\bf r}(\Sigma)}{B_r(\Sigma)\sqrt{n}}\sqrt{\log \left(\frac{B_r(\Sigma)\sqrt{n}}{{\bf r}(\Sigma)}\bigvee 2\right) }+ \frac{\log n}{\sqrt{n}}\right],
\end{align}
which proves bound (\ref{normal_approx_A}).

To complete the proof of bound (\ref{normal_approx}), it is enough to use Theorem \ref{th:varvar} to replace the normalization with $\frac{n}{B_r(\Sigma)}$ by the normalization with the standard deviation of $\xi.$ To this end, note that 
\begin{align}
\label{golova}
&
\frac{\xi}{\sigma_{\xi}}= \frac{n}{B_r(\Sigma)}\xi + \left(\frac{1}{\sigma_{\xi}}-\frac{n}{B_r(\Sigma)}\right)\xi.
\end{align}
Under the assumptions $m_r\lesssim 1$ and $\|\Sigma\|_{\infty}\lesssim \bar g_r,$ we 
get from Theorem \ref{th:varvar} that 
$$
\left|\frac{n}{B_r(\Sigma)}\sigma_{\xi}-1\right|\lesssim \frac{{\bf r}(\Sigma)}{B_r(\Sigma)\sqrt{n}}+\frac{1}{n}.
$$
Without loss of generality, we can and do assume that $\frac{{\bf r}(\Sigma)}{B_r(\Sigma)\sqrt{n}}+\frac{1}{n}\leq c$
for a small enough constant $c>0$ so that  $\left|\frac{n}{B_r(\Sigma)}\sigma_{\xi}-1\right|\leq 1/2$
(otherwise, the bound of the theorem is trivial). Then
$$
\left|\left(\frac{1}{\sigma_{\xi}}-\frac{n}{B_r(\Sigma)}\right)\xi\right| 
\leq \left|\frac{n}{B_r(\Sigma)}\sigma_{\xi}-1\right|\frac{|\xi|}{\sigma_{\xi}}
\lesssim \left(\frac{{\bf r}(\Sigma)}{B_r(\Sigma)\sqrt{n}}+\frac{1}{n}\right) \frac{n}{B_r(\Sigma)}|\xi|.
$$
Combining this with bound of Theorem \ref{HS-conc}, we get that with probability at least $1-e^{-t}$
$$
\left|\left(\frac{1}{\sigma_{\xi}}-\frac{n}{B_r(\Sigma)}\right)\xi\right| 
\lesssim \left(\frac{{\bf r}(\Sigma)}{B_r(\Sigma)\sqrt{n}}+\frac{1}{n}\right)\left(\sqrt{t} \bigvee \frac{t}{B_r(\Sigma)}\bigvee 
\frac{{\bf r}(\Sigma)}{B_r(\Sigma)\sqrt{n}}\sqrt{t}\right).
$$
Using the last bound with $t$ defined by (\ref{t_def}), we easily get that 
\begin{equation}
\label{hvost}
\left|\left(\frac{1}{\sigma_{\xi}}-\frac{n}{B_r(\Sigma)}\right)\xi\right|
\lesssim 
\left[\frac{{\bf r}(\Sigma)}{B_r(\Sigma)\sqrt{n}}\sqrt{\log \left(\frac{B_r(\Sigma)\sqrt{n}}{{\bf r}(\Sigma)}\bigvee 2\right) }+ \frac{\log n}{\sqrt{n}}\right].
\end{equation}

The result now follows from (\ref{B-Ess}), (\ref{golova}) and (\ref{hvost}) by proving bounds on ${\mathbb P}\left\{\frac{\xi}{\sigma_{\xi}}\leq x\right\}$ similar to (\ref{B-Ess-1}), 
(\ref{B-Ess-2}).

\qed

\end{proof}

\section{Concluding remarks}

\noindent\textbf{1.} We start this section with deducing from the non-asymptotic bound of Theorem \ref{th:normal_approx} an asymptotic normality result.
To this end, consider a sequence of 
problems in which the data is sampled from Gaussian distributions in ${\mathbb H}$
with mean zero and covariance $\Sigma=\Sigma^{(n)}.$ 
Let $X=X^{(n)}$ be a centered Gaussian random vector in ${\mathbb H}$ with covariance operator $\Sigma=\Sigma^{(n)}$ and let  
$X_1=X_1^{(n)},\dots, X_n=X_n^{(n)}$ be i.i.d. copies of $X^{(n)}.$ The sample covariance based on $(X_1^{(n)},\dots, X_n^{(n)})$ is denoted by $\hat \Sigma_n.$ 
Let $\sigma(\Sigma^{(n)})$ be the spectrum of $\Sigma^{(n)},$ $\mu_{r}^{(n)}, r\geq 1$ be distinct nonzero eigenvalues of $\Sigma^{(n)}$ arranged in decreasing order and $P_{r}^{(n)}, r\geq 1$ be the corresponding spectral projectors.
As before, denote $\Delta_{r}^{(n)}:=\{j: \sigma_j(\Sigma^{(n)})=\mu_{r}^{(n)}\}$
and let $\hat P_{r}^{(n)}$ be the orthogonal projector on the direct sum 
of eigenspaces corresponding to the eigenvalues $\{\sigma_j(\hat \Sigma_n), j\in \Delta_{r}^{(n)}\}.$

Suppose that the spectral projector of $\Sigma^{(n)}$ to be estimated is $P^{(n)}=P_{r_n}^{(n)},$ the corresponding 
eigenvalue is $\mu^{(n)}=\mu_{r_n}^{(n)},$ its multiplicity is $m^{(n)}=m_{r_n}^{(n)}$ and its spectral gap is 
$\bar g^{(n)}=\bar g_{r_n}^{(n)}.$ 
Denote  
$$
B_n:=B_{r_n}(\Sigma^{(n)}):= 2\sqrt{2}\|C^{(n)}\Sigma^{(n)}C^{(n)}\|_2\|P^{(n)}\Sigma^{(n)}P^{(n)}\|_2.
$$
The following assumption on $\Sigma^{(n)}$ will be needed:
 
\begin{assumption}
\label{ass_sigma_n}
Suppose the following conditions hold:
\begin{equation}
\label{ass_sigma_n_1}
\sup_{n\geq 1}m^{(n)}<+\infty\ \ {\rm and}\ \ 
\sup_{n\geq 1}\frac{\|\Sigma^{(n)}\|_{\infty}}{\bar g^{(n)}}<+\infty; 
\end{equation}

\begin{equation}
\label{ass_sigma_n_4}
B_n\to \infty\ \ {\rm and}\ \ \frac{{\bf r}(\Sigma^{(n)})}{B_n\sqrt{n}}\to 0\ {\rm as}\ n\goin.
\end{equation}
\end{assumption}

Note that Assumption \ref{ass_sigma_n}
implies that 
$
{\bf r}(\Sigma^{(n)})\to \infty\ {\rm and}\ {\bf r}(\Sigma^{(n)})=o(n)\ {\rm as}\ n\to\infty.
$
This easily follows from 
$$
B_n\leq 2\sqrt{2} \sqrt{m^{(n)}}\biggl(\frac{\|\Sigma^{(n)}\|}{g^{(n)}}\biggr)^2
\sqrt{{\bf r}(\Sigma^{(n)})}=O\Bigl(\sqrt{{\bf r}(\Sigma^{(n)})}\Bigr)
$$
and (\ref{ass_sigma_n_4}).
It is also easy to see that, under mild further assumptions, 
$
B_n\asymp \|\Sigma^{(n)}\|_2.
$

\begin{corollary}
\label{th_2_norm}
Suppose Assumption \ref{ass_sigma_n} holds. 
Then
$$
\mathrm{Var}(\|\hat P^{(n)}-P^{(n)}\|_2^2) =\left( \frac{B_n}{n}\right)^2 (1+ o(1))
$$
and the sequences of random variables 
\begin{align}\label{CLT-sample proj}
\biggl\{\frac{n\Bigl(\|\hat P^{(n)}-P^{(n)}\|_2^2-{\mathbb E}\|\hat P^{(n)}-P^{(n)}\|_2^2\Bigr)}{B_n}\biggr\}_{n\geq 1}
\end{align}
and 
$$
\biggl\{\frac{\Bigl(\|\hat P^{(n)}-P^{(n)}\|_2^2-{\mathbb E}\|\hat P^{(n)}-P^{(n)}\|_2^2\Bigr)}
{\sqrt{\mathrm{Var}(\|\hat P^{(n)}-P^{(n)}\|_2^2)}}\biggr\}_{n\geq 1}
$$
both converge in distribution to the standard normal random variable. 
\end{corollary}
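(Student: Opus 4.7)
The plan is to derive all three claims directly from the non-asymptotic estimates in Theorems~\ref{th:varvar} and~\ref{th:normal_approx}, by verifying their sole free hypothesis (condition~(\ref{cond_gamma})) and then checking that each summand on the right-hand side of~(\ref{variance_bd}),~(\ref{normal_approx_A}), and~(\ref{normal_approx}) vanishes under Assumption~\ref{ass_sigma_n}.

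First I verify the hypotheses. Assumption~\ref{ass_sigma_n} entails ${\bf r}(\Sigma^{(n)})=o(n)$ (as noted immediately after the assumption in the excerpt). Combined with the boundedness of $\|\Sigma^{(n)}\|_\infty/\bar g^{(n)}$, Theorem~\ref{th_operator} gives
\[
\E\|\hat\Sigma_n-\Sigma^{(n)}\|_\infty\lesssim \|\Sigma^{(n)}\|_\infty\max\biggl(\sqrt{\tfrac{{\bf r}(\Sigma^{(n)})}{n}},\tfrac{{\bf r}(\Sigma^{(n)})}{n}\biggr)=o(\bar g^{(n)}),
\]
so condition~(\ref{cond_gamma}) holds for all sufficiently large $n$ with any fixed $\gamma\in(0,1)$. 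Together with $m^{(n)}\lesssim 1$ and $\|\Sigma^{(n)}\|_\infty/\bar g^{(n)}\lesssim 1$, this makes the constants in Theorems~\ref{th:varvar} and~\ref{th:normal_approx} uniform in $n$.

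Next I apply Theorem~\ref{th:varvar}: under these bounds the right-hand side of~(\ref{variance_bd}) reduces to $O\bigl({\bf r}(\Sigma^{(n)})/(B_n\sqrt{n})+1/n\bigr)=o(1)$, yielding ${\rm Var}(\|\hat P^{(n)}-P^{(n)}\|_2^2)=(B_n/n)^2(1+o(1))$, which is the first assertion.

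Finally, I invoke Theorem~\ref{th:normal_approx}. Setting $\epsilon_n:={\bf r}(\Sigma^{(n)})/(B_n\sqrt{n})\to 0$, the summands $1/B_n$ and $(\log n)/\sqrt{n}$ in~(\ref{normal_approx_A}) and~(\ref{normal_approx}) are $o(1)$ by Assumption~\ref{ass_sigma_n}, while the middle summand equals $\epsilon_n\sqrt{\log(1/\epsilon_n\vee 2)}$, which tends to $0$ as $\epsilon_n\to 0^+$ by elementary calculus. This yields the convergence in distribution of both normalized sequences to $\mathcal{N}(0,1)$. There is no serious obstacle since the heavy lifting has been done in Theorems~\ref{th:varvar} and~\ref{th:normal_approx}; the only mild delicacy is the logarithmic factor in the middle term, which can alternatively be bypassed by deducing the second convergence from the first via Slutsky's theorem applied with the variance asymptotics just established.
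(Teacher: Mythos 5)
Your proposal is correct and follows exactly the route the paper intends: the corollary is stated as a direct deduction from the non-asymptotic bounds of Theorems~\ref{th:varvar} and~\ref{th:normal_approx}, after checking that Assumption~\ref{ass_sigma_n} forces ${\bf r}(\Sigma^{(n)})=o(n)$, hence condition~(\ref{cond_gamma}) for large $n$, and that every term on the right-hand sides of~(\ref{variance_bd}) and~(\ref{normal_approx}) is $o(1)$. Your verification of each hypothesis and of the vanishing of the logarithmic middle term is accurate, so there is nothing to add.
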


\textbf{2.} Neither normal approximation bounds of Theorem \ref{th:normal_approx}, nor the asymptotic normality result of Corollary   
\ref{th_2_norm} could be directly used to construct confidence regions for spectral projectors of covariance operators 
or to develop hypotheses tests. The reason is that, in these results, the squared Hilbert--Schmidt norm $\|\hat P^{(n)}-P^{(n)}\|_2^2$
is centered with its expectation and normalized with its standard deviation (or, alternatively, with $\frac{n}{B_r(\Sigma)}$)
that depend on unknown covariance operator $\Sigma.$ It would be of interest to develop ``data-driven" versions 
of these results, but this problem seems to be challenging and goes beyond the scope of the current paper. 
At the moment, we have only a partial solution (that is far from being perfect) of this problem in the case when the target spectral projector $P^{(n)}$ is one-dimensional (that is, the eigenvalue $\mu^{(n)}$ is of multiplicity one). We briefly outline 
such a result below. 
Assume that we are given a sample of size $3n$ of i.i.d. centered Gaussian vectors 
$$\left\{X^{(n)}_1,\ldots, X_n^{(n)}; \tilde X^{(n)}_{1}, \ldots, \tilde X^{(n)}_n; \bar X^{(n)}_1,\ldots,\bar X^{(n)}_n  \right\},$$
with common covariance operator $\Sigma^{(n)}$. For each of the three subsamples of size $n,$ define its sample 
covariance operator: 
$$
\hat \Sigma^{(n)} =  \frac{1}{n}\sum_{i=1}^n X_i^{(n)} \otimes X_i^{(n)},\quad  \tilde \Sigma^{(n)} =  \frac{1}{n}\sum_{i=1}^n \tilde X_i^{(n)} \otimes\tilde X_i^{(n)},\quad \bar \Sigma^{(n)} =  \frac{1}{n}\sum_{i=1}^n \bar X_i^{(n)} \otimes \bar X_i^{(n)}.
$$
Let $\hat P^{(n)}$ be the orthogonal projector onto the eigenspace associated with the eigenvalue $\hat \mu^{(n)}$ of $\hat \Sigma^{(n)}$ (which is of multiplicity one with a high probability).
Similarly, $\tilde P^{(n)}$ and $\bar P^{(n)}$ are the orthogonal projectors onto the eigenspaces associated with the eigenvalue 
$\tilde \mu^{(n)}$ of $\tilde \Sigma^{(n)}$ and the eigenvalue $\bar \mu^{(n)}$ of $\bar \Sigma^{(n)},$ respectively. 
Denote 
$$
\hat b^{(n)} = \sqrt{\bigl\langle \hat P^{(n)}, \tilde P^{(n)} \bigr\rangle} -  1\ \ {\rm and}\ \  \tilde b^{(n)} = \sqrt{\bigl\langle \tilde P^{(n)}, \bar P^{(n)} \bigr\rangle} -  1.
$$ 
It turns out that the statistic $-2\hat b^{(n)}$ can be used as an estimator of the expectation ${\mathbb E}\|\hat P^{(n)}-P^{(n)}\|_2^2$
while the statistic $\left|(1+\hat b^{(n)})^2 - (1+\tilde b^{(n)})^2\right|$ can be used to estimate the standard deviation  
${\rm Var}^{1/2}(\|\hat P^{(n)}-P^{(n)}\|_2^2)$ (note that $\hat b^{(n)}$ was introduced and studied in \cite{Koltchinskii_Lounici_bilinear}
as an estimator of a ``bias parameter" of  empirical spectral projectors and empirical eigenvectors).  
Moreover, it can be proved that, under Assumption \ref{ass_sigma_n},  
the sequence
\begin{align}\label{pure-data-driven-stat}
\left\lbrace  \frac{\|\hat P^{(n)}-P^{(n)}\|_2^2 + 2\hat b^{(n)} }{\left|(1+\hat b^{(n)})^2 - (1+\tilde b^{(n)})^2\right|}  \right\rbrace_{n\geq 1}
\end{align}
converges in distribution to a Cauchy type random variable.

For the spiked covariance model (\ref{spike}) with $m, s_1^2, \dots, s_m^2, \sigma^2$ being fixed 
and $p=p_n\to \infty$ as $n\to\infty,$ it is easy to find a simpler version of data-driven normalization 
with the limit distribution being standard normal.  For simplicity, assume that $m=1,$ so, the goal is 
to estimate the first principal components $\theta_1.$
Recall that in this case 
$
B_n=B_1(\Sigma^{(n)})=\frac{2\sqrt{2}(s_1^2+\sigma^2)\sigma^2\sqrt{p-1}}{s_1^4}
$
(see (\ref{B_r_large_p})).

Thus, the following estimator of $B_n$ could be used: $
\hat B_n = 2\sqrt{2} \frac{\hat\mu_1^{(n)} \hat\mu_2^{(n)}}{(\hat\mu_1^{(n)} - \hat \mu_2^{(n)})^2} \sqrt{p_n-1},
$
where $\hat \mu_1^{n}$ and $\hat\mu_2^{(n)}$ are the largest and the second largest eigenvalues of $\hat\Sigma^{(n)} = \frac{1}{n}\sum_{i=1}^{n} X_i^{(n)}\otimes X_i^{(n)},$ respectively. 
In the case of such a spiked covariance model, Assumption \ref{ass_sigma_n} is equivalent to $p=p_n\to \infty$ and $p=o(n).$ 
Under these assumptions, it is easy to prove that
$
\frac{\hat B_n}{n} 
= \frac{B_n}{n}\left(1 + o_{\mathbb P}(1)\right).
$

Let $P_1 = \theta_1\otimes \theta_1.$ Then, it can be proved that  
the sequence
\begin{align}\label{data-driven-stat}
\left\{ \frac{n}{\hat B_n} \left( \|\hat P^{(n)}_1-P^{(n)}_1\|_2^2  + 2 \hat b^{(n)}\right)\right\}_{n\geq 1}
\end{align}
converges in distribution to a standard normal random variable.

\noindent \textbf{3.} To illustrate the asymptotic behavior of standard PCA, we consider the following spiked covariance setting. Let $X_1,\ldots, X_n,\tilde X_1,\ldots, \tilde X_n, \bar X_1,\dots, \bar X_n$ be $3n$ i.i.d. random vectors in $\R^p$ with covariance $\Sigma = s_1^2 (\theta_1 \otimes \theta_1) +  \sigma^2 I_p$, $s_1^2 = 2$, $\sigma^2 = 1/10,$ where $\theta_1$ is an arbitrary unit vector in $\R^p$. For selected values of $(n,p),$ we computed the statistic $\|\hat P^{(n)}_1 - P_1 \|_2^2$, $\hat B_n$ and the empirical bias estimators $\hat b^{(n)}_1$, $\tilde b_1^{(n)}$ as well as the statistics (\ref{pure-data-driven-stat}) and 
\begin{align}\label{theory-stat}
\left\{ \frac{n}{ B_n} \left( \|\hat P^{(n)}_1-P_1\|_2^2  + 2 \hat b^{(n)}\right)\right\}_{n\geq 1}.
\end{align}
We performed $1000$ replications of this experiment. 

\noindent In Table \ref{table-1-mean-risk}, we compare the sample mean of the statistic $\|\hat P^{(n)}_1 - P_1 \|_2^2$ denoted by $\hat m_n$ (that provides an estimator of the risk $\E\|\hat P_1^{(n)} -P_1\|_2^2$ based on the repeated 
samples of size $n$) to the estimated risk $-2\hat b^{(n)}_1$ for each individual sample and the first order approximation of the theoretical risk derived in (\ref{Risk-mean}) which can be computed easily in this model since $A_n := A_1(\Sigma) = 2\frac{(s_1^2+\sigma^2)\sigma^2}{s_1^4} (p-1)$. More precisely, in the second row of the table the sample means of
$\frac{|2\hat b_1^{(n)}+\hat m_n|}{|\hat m_n|}$ over $1000$ replications of the experiment are presented.   
The results show that $-2\hat b^{(n)}_1$ provides a somewhat better approximation of the risk $\E\|\hat P_1^{(n)} -P_1\|_2^2$ than the first order approximation (\ref{Risk-mean}) for small sample size. For relatively large sample size, the first order approximation (\ref{Risk-mean}) becomes more precise than the estimator $-2\hat b^{(n)}_1$.

\begin{table}[!htbp]
\begin{tabular}{|c|c|c|c|c|c|c|c|c|c|}
\hline
 $n$ &\thead{$100$}&\thead{$200$}&\thead{$300$}&\thead{$500$}&\thead{$10^3$}&\thead{$10^4$}\\ \hline
$\frac{| A_n/n - \hat m_n|}{|\hat m_n|}$ &$0.49$ & $0.24$ &  $0.15$ & $0.1$ &  $ 0.049$ & $0.008$ \\   
\hline
$\frac{|2 \hat b_1^{(n)} + \hat m_n|}{|\hat m_n|}$& $0.07 $ & $ 0.06$ &$ 0.054$ & $0.052$ & $0.045$ & $ 0.036$ \\ 
\hline
\end{tabular}
\caption{Relative deviation of the risk approximation $\frac{A_n}{n}$ and the risk estimator $-2 \hat b_1^{(n)}$ from the sample risk  $\hat m_n$ for $p=10^3$.}\label{table-1-mean-risk}
\end{table}

\noindent In Table \ref{table-2-variance-risk}, we compare the sample variance of the statistic $\|\hat P^{(n)}_1 - P_1 \|_2^2$ denoted by $\hat S_n^2$ to the variance estimator $\tilde V_n:=\left((1+\hat b_1^{n})^2- (1+\tilde b_1^{(n)})^2\right)^2$ and also to the first order approximation of the theoretical variance $\frac{B_n^2}{n^2}$ derived in (\ref{Risk-var}) with $B_n= 2 \sqrt{2} \frac{(s_1^2+\sigma^2)\sigma^2}{s_1^4} \sqrt{p-1}$. Again, in the second row of the table the sample means of 
$\frac{|\tilde V_n-\hat S_n^2|}{\hat S_n^2}$ over $1000$ replications of the experiment are presented.
We observe that $\tilde V_n$ and $\frac{B_n^2}{n^2}$ provide reasonable approximation of the variance of $\|\hat P^{(n)}_1 - P_1\|_2^2$ only for relatively large sample sizes.

\begin{table}[!htbp]
\begin{tabular}{|c|c|c|c|c|c|c|c|c|c|}
\hline
 $n$ &\thead{$100$}&\thead{$200$}&\thead{$300$}&\thead{$500$}&\thead{$10^3$}&\thead{$10^4$}\\ \hline
$\frac{| B_n^2/n^2 - \hat S_n^2 |}{\hat S_n^2 }$ &$0.62$ & $0.65$ & $0.66$& $0.58$ & $0.42$ & $0.07$ \\   
\hline
$\frac{|\tilde V_n - \hat S_n^2 |}{\hat S_n^2 }$& $ 0.82$ & $0.73$ & $0.67 $ & $0.58$ & $ 0.39$ & $0.05 $\\
\hline

\end{tabular}
\caption{Relative deviation of the variance estimator $\tilde V_n$ and the variance approximation $\frac{B_n^2}{n^2}$ from the sample variance $\hat S_n^2$ for $p=10^3$.}\label{table-2-variance-risk}
\end{table}


\noindent Finally, we compute empirical densities of the statistics (\ref{pure-data-driven-stat}) and (\ref{theory-stat}) and compare them with their respective theoretical limiting distributions in Figure \ref{normal convergence}. For (\ref{theory-stat}), we also provide the empirical mean and variance.

\begin{figure}[!htbp]
\vspace{5mm}
\hspace{-7cm}
\includegraphics[width=0.7\linewidth,natwidth=423,natheight=343]{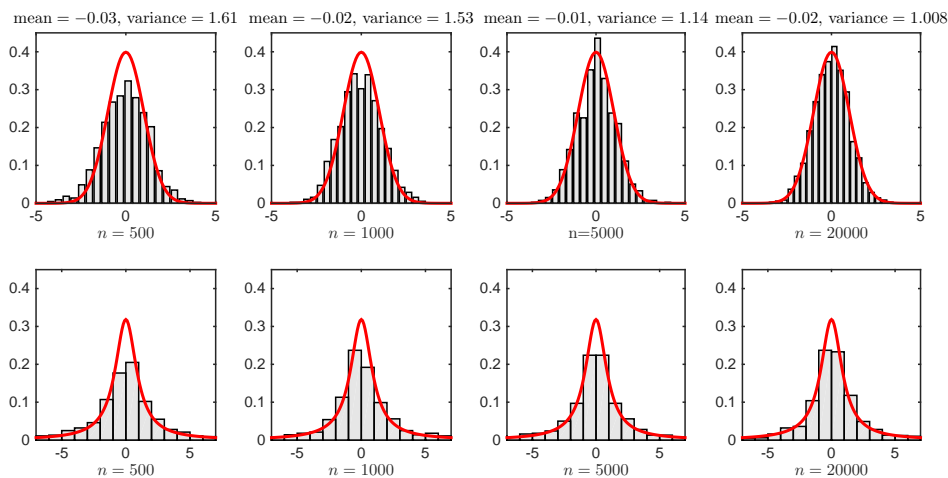}
\caption{Top: empirical distribution of (\ref{theory-stat}) and standard normal density for $p=1000$. Bottom: empirical distribution and theoretical Cauchy distribution of (\ref{pure-data-driven-stat}) for $p=1000$.}\label{normal convergence}
\end{figure}

%

\end{document}